\documentclass[oneside,english]{amsart}
\usepackage[T1]{fontenc}
\usepackage[latin9]{inputenc}
\usepackage{geometry}
\geometry{verbose,tmargin=3cm,bmargin=3cm,lmargin=3cm,rmargin=3cm}
\usepackage{color}
\usepackage{babel}
\usepackage{verbatim}
\usepackage{mathrsfs}
\usepackage{enumitem}
\usepackage{amstext}
\usepackage{amsmath}
\usepackage{amsthm}
\usepackage{amssymb}
\usepackage[all]{xy}
\usepackage[unicode=true,pdfusetitle,
 bookmarks=true,bookmarksnumbered=false,bookmarksopen=false,
 breaklinks=false,pdfborder={0 0 1},backref=false,colorlinks=true]
 {hyperref}

\makeatletter

\newcommand{\DD}{\mathbb{D}}
\newcommand{\PP}{\mathbb{P}}

\newcommand{\ZZ}{{\mathbb Z}}

\newcommand{\QQ}{{\mathbb Q}}

\newcommand{\CC}{{\mathbb C}}

\renewcommand{\AA}{{\mathbb A}}

\newcommand{\calO}{{\mathcal O}}
\newcommand{\calE}{{\mathcal E}}
\newcommand{\calL}{{\mathcal L}}

\newcommand{\calM}{{\mathcal M}}

\newcommand{\calF}{{\mathcal F}}
\newcommand{\calG}{{\mathcal G}}

\newcommand{\calH}{{\mathcal H}}
\newcommand{\calC}{{\mathcal C}}

\newcommand{\calA}{{\mathcal A}}

\newcommand{\scrM}{\mathscr{M}}

\DeclareMathOperator{\Spec}{Spec}

\newcommand{\End}{\textrm{End}}

\newcommand{\FF}{\mathbb{F}}
\newcommand{\Fq}{\mathbb{F}_q}
\newcommand{\Fp}{\mathbb{F}_p}

\newcommand{\Aut}{\textrm{Aut}}
\newcommand{\Hom}{\textrm{Hom}}
%\newcommand{\hom}{\textrm{hom}}

%l and p-adic commands
\newcommand{\Zp}{\mathbb{Z}_p}

\newcommand{\Qp}{\mathbb{Q}_p}
\newcommand{\Qpbar}{\overline{\mathbb Q}_p}
\newcommand{\Ql}{\mathbb{Q}_l}
\newcommand{\Qlbar}{\overline{\mathbb{Q}}_l}
\newcommand{\Qlambar}{\overline{\mathbb Q}_{\lambda}}
\newcommand{\Qlambarp}{\overline{\mathbb Q}_{\lambda'}}
\newcommand{\fisoc}[1]{\textbf{F-Isoc}(#1)}
\newcommand{\fisocd}[1]{\textbf{F-Isoc}^{\dagger}(#1)}
\newcommand{\weil}[1]{\textbf{Weil}(#1)}

%%%%%%%%%%%%%%%%%%%%%%%%%%%%%% LyX specific LaTeX commands.

%%%%%%%%%%%%%%%%%%%%%%%%%%%%%% Textclass specific LaTeX commands.
\numberwithin{equation}{section}
\numberwithin{figure}{section}

\theoremstyle{plain}
\newtheorem{thm}{\protect\theoremname}[section]
\theoremstyle{empty}

\theoremstyle{plain}
\newtheorem{conjecture}[thm]{\protect\conjecturename}
\theoremstyle{remark}
\newtheorem{rem}[thm]{\protect\remarkname}
\theoremstyle{plain}
\newtheorem*{thm*}{\protect\theoremname}
\theoremstyle{plain}
\newtheorem*{cor*}{\protect\corollaryname}
\theoremstyle{remark}
\newtheorem*{acknowledgement*}{\protect\acknowledgementname}
\theoremstyle{definition}
\newtheorem{defn}[thm]{\protect\definitionname}
\theoremstyle{plain}
\newtheorem{fact}[thm]{\protect\factname}
\theoremstyle{remark}
\newtheorem{notation}[thm]{\protect\notationname}
\theoremstyle{definition}
\newtheorem{example}[thm]{\protect\examplename}
\theoremstyle{plain}
\newtheorem{cor}[thm]{\protect\corollaryname}
\theoremstyle{remark}
\newtheorem*{rem*}{\protect\remarkname}
\theoremstyle{plain}
\newtheorem{lem}[thm]{\protect\lemmaname}
\newlist{casenv}{enumerate}{4}
\setlist[casenv]{leftmargin=*,align=left,widest={iiii}}
\setlist[casenv,1]{label={{\itshape\ \casename} \arabic*.},ref=\arabic*}
\setlist[casenv,2]{label={{\itshape\ \casename} \roman*.},ref=\roman*}
\setlist[casenv,3]{label={{\itshape\ \casename\ \alph*.}},ref=\alph*}
\setlist[casenv,4]{label={{\itshape\ \casename} \arabic*.},ref=\arabic*}

\theoremstyle{plain}
\newtheorem{prop}[thm]{\protect\propositionname}
\theoremstyle{plain}

%%%%%%%%%%%%%%%%%%%%%%%%%%%%%% User specified LaTeX commands.
\usepackage[all]{xy}
\usepackage{url}
\theoremstyle{definition}

\numberwithin{equation}{thm}
%\numberwithin{equation}{lem}

\makeatother

\providecommand{\acknowledgementname}{Acknowledgement}
\providecommand{\casename}{Case}
\providecommand{\conjecturename}{Conjecture}
\providecommand{\corollaryname}{Corollary}
\providecommand{\definitionname}{Definition}
\providecommand{\examplename}{Example}
\providecommand{\factname}{Fact}
\providecommand{\lemmaname}{Lemma}
\providecommand{\notationname}{Notation}
\providecommand{\propositionname}{Proposition}
\providecommand{\questionname}{Question}
\providecommand{\remarkname}{Remark}
\providecommand{\theoremname}{Theorem}

\begin{document}

\title{Rank 2 Local Systems and Abelian Varieties}

\author{Raju Krishnamoorthy}
\email{raju@uga.edu }  
\address{Department of Mathematics, University of Georgia, Athens, GA 30605, USA}
\author{Ambrus P\'al}
\email{a.pal@imperial.ac.uk}
\address{Department of Mathematics,
180 Queens Gate, Imperial College, London, SW7 2AZ,
United Kingdom}
\begin{abstract}
Let $X/\mathbb{F}_{q}$ be a smooth, geometrically connected variety. For $X$ projective, we prove a Lefschetz-style theorem for abelian schemes of $\text{GL}_2$-type on $X$, modeled after a theorem of Simpson. Inspired by work of Corlette-Simpson over $\mathbb{C}$, we formulate a conjecture that absolutely irreducible rank 2 local systems with infinite monodromy on $X$ come from families of abelian varieties.  We have the following application of our main result. If one assumes a strong form of Deligne's ($p$-adic) \emph{companions conjecture} from Weil II, then our conjecture for projective varieties reduces to the conjecture for projective curves. We also answer affirmitavely a question of Grothendieck on extending abelian schemes via their $p$-divisible groups.
\end{abstract}

\maketitle

\tableofcontents{}

\section{Introduction}
The goal of this article is to prove the following Lefschetz-style theorem. If $X/k$ is a smooth variety over a perfect field, $\fisoc{X}_{\Qpbar}$ denotes the category of $F$-isocrystals on $X$ with coefficients in $\Qpbar$. On a smooth \emph{proper} variety $X/\Fq$, an $F$-isocrystal is a $p$-adic analog of a lisse $l$-adic sheaf.
\begin{thm*}[\ref{Theorem:main_lefschetz}] Let $X/\Fq$ be a smooth projective variety. Then there exists an open subset $U\subset X$, whose complement has codimension at least 2, such that the following holds.

Let $C\subset U$ be a smooth projective curve that is the complete intersection of smooth ample divisors of $X$. Let $\pi_C\colon A_C\rightarrow C$ be an abelian scheme of $\text{GL}_2$-type: for a prime $l\neq p$, $R^1(\pi_C)_*\Qlbar$ has irreducible summands that have rank 2 and determinant $\Qlbar(-1)$. Then the following are equivalent.

\begin{itemize}
\item There exists an abelian scheme of $\text{GL}_2$-type $B_U\rightarrow U$ with $B_C\rightarrow C$ isogenous to $A_C\rightarrow C$.
\item The $F$-isocrystal $\mathbb{D}(A_C[p^{\infty}])\otimes \Qpbar\in \fisoc{C}_{\Qpbar}$ extends to an $F$-isocrystal $\calE\in \fisoc{X}_{\Qpbar}$.
\end{itemize}
%Let $\mathcal{E}$ be a semi-simple object of $\fisoc{X}_{\Qpbar}$ such that
%\begin{itemize}
%%\item For every closed point $x$ of $X$, $P_x(\mathcal{E},t)\in\QQ[t]$,
%\item the irreducible constituents of $\mathcal{E}$ have rank 2 and determinant $\Qpbar(-1)$, and
%\item there exists a smooth projective curve $C\subset U$, the complete intersection of smooth ample divisors in $X$, and an abelian scheme $A_C\rightarrow C$ of dimension $g$ such that $$\mathbb{D}(A_C[p^{\infty}])\otimes\Qpbar\cong \calE_C$$.
%\end{itemize}
%Then there exists an abelian scheme $B_U\rightarrow U$ of dimension $g$ such that $$\mathbb{D}(B_U[p^{\infty}])\otimes\Qpbar\cong\calE_U.$$ In particular, $A_C\rightarrow C$ extends to an abelian scheme over $U$ after after an isogeny.
\end{thm*}
Here, if $G\rightarrow S$ is a $p$-divisible group in characteristic $p$, then $\mathbb{D}(G)$ denotes the contravariant Dieudonn\'e crystal attached to $G$. This theorem is modeled on the following, which easily follows from a very special case of a corollary of Simpson.
\begin{thm}\label{Theorem:simpson_lefschetz}\cite[Corollary 4.3]{simpsonhiggs} Let $X/\CC$ be a smooth projective variety and $C\subset X$ a smooth curve that is the complete intersection of smooth ample divisors. Let $\pi_C\colon A_C\rightarrow C$ be an abelian scheme and set $L_C:=R^1(\pi^{an}_C)_*\CC$. Then the following are equivalent.\begin{itemize}
\item There exists an abelian scheme $\pi_X\colon A_X\rightarrow X$ extending $A_C\rightarrow C$.
\item The local system $L_C$ extends to a local system $L_X$ on $X$.
\end{itemize}
\end{thm}
Therefore, Theorem \ref{Theorem:main_lefschetz} is an analog of Theorem \ref{Theorem:simpson_lefschetz} over $\Fq$ for abelian schemes of $\text{GL}_2$-type. In contrast to Theorem \ref{Theorem:simpson_lefschetz}, Theorem \ref{Theorem:main_lefschetz} has an intervening $U\subset X$ and also potentially an isogeny.

The authors conjecture the following. For the definition of $l$-adic companions, see Remark \ref{Remark:companions_l-adic}.

\begin{conjecture}
\label{Conjecture:R2}(Conjecture R2) Let $X/\Fq$ be a smooth, geometrically connected, quasi-projective variety, let $l\neq p$ be a prime, and let $L$ be a lisse $\Qlbar$-sheaf of rank 2 such that
\begin{itemize}
\item $L$ has determinant $\Qlbar(-1)$ and
\item $L$ is irreducible with infinite geometric monodromy.
\end{itemize}
Then $L$ comes from a family of abelian varieties: there exists a non-empty open $U\subset X$ together with an abelian scheme
\[
\pi\colon A_{U}\rightarrow U
\]
 such that
\[
R^{1}\pi_{*}\Qlbar\cong{\displaystyle \bigoplus}(^{\sigma}L|_U)^{m}
\]
where $^{\sigma}L$ runs over the $l$-adic companions of $L$ and $m\in\mathbb{N}$.
\end{conjecture}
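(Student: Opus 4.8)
The plan is to transpose the Corlette--Simpson strategy to positive characteristic, replacing their period map to a polydisc Shimura stack by a direct construction of the abelian scheme out of $p$-adic (Dieudonn\'e-theoretic) data, and replacing the interplay between archimedean and non-archimedean places by Deligne's companions formalism. Concretely I would reduce the statement for projective $X$ to the case of projective curves, take the curve case as the (still conjectural) input, and spend the real effort on propagating an abelian scheme from a well-chosen curve to all of $X$.

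\textbf{Reduction to curves.} Assume $\dim X \geq 2$ and embed $X$ by a sufficiently ample linear system. By Bertini and the Lefschetz theorem for $\pi_1$, a generic complete-intersection curve $C \subset X$ is smooth, geometrically connected, with $\pi_1(C) \twoheadrightarrow \pi_1(X)$, and the same on geometric fundamental groups after base change to $\overline{\mathbb{F}}_q$. Hence $L|_C$ is again rank $2$, irreducible, with determinant $\overline{\mathbb{Q}}_l(-1)$ and infinite geometric monodromy. One also checks that the $l$-adic companions of $L|_C$ are precisely the restrictions ${}^\sigma L|_C$ of the companions of $L$, which follows from Chebotarev together with compatibility of companions under pullback. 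Granting Conjecture R2 for $C$, one obtains an abelian scheme $\pi_C\colon A_C \to C$ with $R^1\pi_{C*}\overline{\mathbb{Q}}_l \cong \bigoplus ({}^\sigma L|_C)^{m_\sigma}$.

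\textbf{Propagating from $C$ to $X$.} This is the technical core, and the reason the Lefschetz theorems and Grothendieck's question are invoked. The sheaf $\bigoplus ({}^\sigma L)^{m_\sigma}$ already lives on all of $X$; the task is to upgrade this into an actual abelian scheme, which I would do $p$-adically. The crystalline companion of $L$ is a rank $2$ overconvergent $F$-isocrystal on $X$, so the corresponding sum over all companions (including the $p$-adic one) is a global $F$-isocrystal restricting on $C$ to the rational Dieudonn\'e module of the Barsotti--Tate group $A_C[p^\infty]$. Then (i) a purity/Lefschetz statement should show that $A_C[p^\infty]$ extends to a Barsotti--Tate group over an open $U \subseteq X$ with $\mathrm{codim}(X \setminus U) \geq 2$, the obstruction being concentrated at codimension-one points and handled by a N\'eron--Ogg--Shafarevich-type criterion; (ii) the affirmative answer to Grothendieck's question algebraizes this Barsotti--Tate group into an abelian scheme $\pi\colon A_U \to U$; and (iii) one identifies $R^1\pi_*\overline{\mathbb{Q}}_l$ with $\bigoplus ({}^\sigma L)^{m_\sigma}$ over $U$ by comparing along $C$ and using that a morphism of lisse sheaves on $U$ which is an isomorphism on a curve through which $\pi_1$ surjects is an isomorphism. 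Uniqueness, and the gluing of the multiplicities $m_\sigma$, rest on the full-faithfulness half of the Lefschetz theorem, namely that $\mathrm{Hom}_X(A,B)\otimes\mathbb{Z}_l \to \mathrm{Hom}_C(A|_C,B|_C)\otimes\mathbb{Z}_l$ is an isomorphism, which is a Zarhin-style semisimplicity-and-weights argument on the relevant Tate modules.

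\textbf{$p$-adic versus $l$-adic, and the main obstacle.} Because "coming from an abelian scheme" is most naturally detected by the Barsotti--Tate group, the construction above most directly proves the $p$-adic variant of Conjecture R2 for projective $X$, reducing it to the $p$-adic variant for curves. Recovering the original $l$-adic statement requires feeding in the strong form of Deligne's companions conjecture (existence of the crystalline companion and compatibility of the whole companion system), which converts the $p$-adic conclusion back into the $l$-adic one; this is precisely why the $l$-adic reduction is conditional. The hard part will be steps (i)--(ii): showing that a Barsotti--Tate group defined on a Lefschetz curve extends across a projective variety in codimension $\geq 1$ and then algebraizes. Extension can only fail in codimension one, so the genuine content is a purity theorem for Barsotti--Tate groups combined with Artin approximation for the algebraization; and one cannot descend below a codimension-$\geq 2$ open, which is exactly why the conjecture carries the auxiliary open $U$. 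A secondary but real nuisance is bookkeeping of the coefficient field and the finite set of companions so that the data obtained on $C$ is intrinsic to $X$.
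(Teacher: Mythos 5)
The statement you are addressing is a conjecture, and like the paper you only propose a conditional reduction: granting the curve case and a complete set of companions, propagate the abelian scheme from a Lefschetz curve to $X$. Your high-level architecture (work $p$-adically via the crystalline companion, construct Barsotti--Tate data, invoke Grothendieck's question and Lefschetz-type statements, and return to the $l$-adic statement only through the full companions package) is the paper's architecture, i.e.\ that of Theorem \ref{Theorem:main_theorem}. But the mechanism you give for the core steps (i)--(ii) has a genuine gap. First, $A_C[p^\infty]$ is not extended from $C$ to $U$ by any purity or N\'eron--Ogg--Shafarevich argument: $C$ has codimension $n-1$ in $X$, so this is not a codimension-one extension problem, and no purity theorem for Barsotti--Tate groups produces such an extension (the supersingular families over $\mathbb{A}^{2}\setminus\{0\}$ show how badly extension can fail in characteristic $p$). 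In the paper the BT group on $U$ is instead built directly from the complete set of $p$-adic companions: the Lafforgue--Abe slope bound (Theorem \ref{Theorem:rank_2_slope_bounds}, crucially using overconvergence) forces the slopes into $[0,1]$, Lemma \ref{Lemma:Katz} then yields a Dieudonn\'e crystal on an open $U$ with complement of codimension $\geq 2$, and de Jong's equivalence (Theorem \ref{Theorem:de Jong}) converts it into a BT group $\mathcal{G}_U$; its restriction to $C$ is only \emph{isogenous} to a power of $A_C[p^\infty]$, which is why the isogeny bookkeeping of Proposition \ref{Proposition:get_the_isogeny_right} is needed before anything else.

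Second, Corollary \ref{Corollary:grothendieck} cannot ``algebraize this Barsotti--Tate group into an abelian scheme'': it only asserts that an abelian scheme already defined on a dense open subset extends if and only if its $p$-divisible group does, so it presupposes the abelian scheme. The actual engine for passing from $C$ to $U$ is the inductive chain $C=Z_{n-1}\subset Z_{n-2}\subset\cdots\subset U$ of ample complete intersections: at each stage one lifts a quasi-polarization of the BT group using the Lefschetz theorem for homomorphisms of BT groups (Lemma \ref{Lemma:lefschetz_hom_BT}, resting on Abe--Esnault's $H^0$-Lefschetz), applies Serre--Tate theory along the formal completion along the divisor to get a \emph{polarizable} formal abelian scheme (Corollary \ref{Corollary:formal_abelian_scheme}), algebraizes by Grothendieck's existence theorem and spreads out to a Zariski neighborhood by Litt's Lemma \ref{Lemma:algebraize_formal_map}, and only then invokes Corollary \ref{Corollary:grothendieck} to cross the remaining locus; Artin approximation plays no role. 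Finally, the Lefschetz theorem for homomorphisms of abelian schemes (Theorem \ref{Theorem:Lefschetz_AV}, via Tate--Zarhin--Faltings) is not mere uniqueness bookkeeping: the construction produces an abelian scheme of dimension $g[K:\mathbb{Q}_p]$, and that theorem is what lets one lift (a $p$-power multiple of) the projector cutting out the dimension-$g$ factor matching $A_C$. Without these ingredients your steps (i)--(ii) do not go through as stated.
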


In particular, we conjecture such $L$ come from $H^1$ of an abelian scheme on an open $U\subset X$. The main evidence for Conjecture \ref{Conjecture:R2} comes from Drinfeld's first work on the Langlands correspondence.
\begin{thm}
\label{Theorem:GL2}(Drinfeld) Let $C/\Fq$ be a smooth affine curve and let $L$ be as in Conjecture \ref{Conjecture:R2}. Suppose $L$ has infinite (geometric) monodromy around some point at $\infty\in\overline{C}\backslash C$. Then $L$ comes from a family of abelian varieties in the following sense: let $E$ be the field generated by the Frobenius traces of $L$ and suppose $[E:\QQ]=g$. Then there exists an abelian scheme
\[
\pi\colon A_{C}\rightarrow C
\]
of dimension $g$ and an isomorphism $E\cong \textrm{End}_{C}(A)\otimes\QQ$, realizing $A_C$ as a $GL_{2}$-type abelian scheme, such that $L$ occurs as a summand of $R^1\pi_*\Qlbar$. Moreover, $A_{C}\rightarrow C$ is totally degenerate around $\infty$.
\end{thm}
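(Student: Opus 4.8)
The plan is to extract the abelian scheme directly from Drinfeld's proof of the Langlands correspondence for $GL_2$ over the function field $F := \mathbb{F}_q(C)$; the extra ``infinite monodromy at $\infty$'' hypothesis is precisely what forces the output to be geometric rather than merely automorphic. First one normalizes. Since $L$ is irreducible on a curve it is pure of weight $1$ (Drinfeld, in the rank $2$ case), and together with $\det L \cong \overline{\mathbb{Q}}_l(-1)$ this yields an alternating isomorphism $L \xrightarrow{\sim} L^\vee(-1)$, i.e.\ a symplectic self-duality --- the structure needed for $L$ to sit inside the first cohomology of an abelian scheme. By Grothendieck's $\ell$-adic local monodromy theorem the inertia $I_\infty$ acts quasi-unipotently; as its image in $GL(L)$ is infinite while $\det L$ is unramified at $\infty$, after a finite extension of $\mathbb{F}_q$ the restriction $L|_{I_\infty}$ is a single nontrivial unipotent block, so $L$ is \emph{special} (an unramified twist of the Steinberg representation) at $\infty$.

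Next, by Drinfeld's theorem $L$ corresponds to a cuspidal automorphic representation $\pi = \bigotimes'_x \pi_x$ of $GL_2(\mathbb{A}_F)$, unramified outside $S := \{\infty\}\cup(\text{ramification of }L)$ and with matching local $L$-factors at every place; by the previous step $\pi_\infty$ is special. The Hecke eigenvalues of $\pi$ are the Frobenius traces $a_x = \operatorname{tr}(\operatorname{Frob}_x \mid L)$ for $x \notin S$; these are algebraic numbers and generate a number field $E$, say of degree $g$ over $\mathbb{Q}$. Drinfeld's construction also produces, for each finite place $v$ of $E$ prime to $p$, a \emph{companion} rank $2$ lisse $E_v$-sheaf $L_v$ on an open of $C$ with the same Frobenius traces (via $E\hookrightarrow E_v$), and $L_{v_0}\cong L$ for our chosen $v_0\mid l$; fixing $l$, the sheaves $\{L_v\}_{v\mid l}$ assemble into a lisse $(E\otimes\mathbb{Q}_l)$-sheaf.

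Now one geometrizes. Since $\pi_\infty$ is special, $\pi$ occurs in the cohomology of a Drinfeld modular curve: for a fine enough level $\Gamma$ supported away from $\infty$, let $\overline{M}_\Gamma \to C$ be the associated compactified moduli space of rank $2$ elliptic sheaves (equivalently, rank $2$ shtukas with a fixed pole at $\infty$ and $\Gamma$-structure), a smooth projective relative curve over the affine curve $C$ carrying commuting actions of $\operatorname{Gal}$ and of the Hecke algebra of $GL_2(\mathbb{A}_F^S)$; set $J_\Gamma := \operatorname{Pic}^0_{\overline{M}_\Gamma/C}$, an abelian scheme over $C$. Drinfeld's determination of $R^1$ of $\overline{M}_\Gamma$ shows that the $\pi_f$-isotypic part of the rational Tate module of $J_\Gamma$, cut out by a Hecke idempotent defined over $E$, is free of rank $2$ over $E\otimes\mathbb{Q}_l$ and, as a Galois module, is isomorphic to $\bigoplus_{v\mid l}L_v$ with $L$ the $v_0$-summand. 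Let $A_C \subset J_\Gamma$ be the corresponding abelian subscheme (defined up to isogeny by the idempotent): then $A_C \to C$ is an abelian scheme of dimension $g$, and since the companions $L_v$ are pairwise non-isomorphic absolutely irreducible Galois modules the $E$-action forces $\operatorname{End}_C(A_C)\otimes\mathbb{Q} \cong E$, so $A_C$ is of $GL_2$-type and $R^1\pi_*\mathbb{Q}_l \cong \bigoplus_{v\mid l}L_v$. Finally, because $\pi_\infty$ is special, $\overline{M}_\Gamma$ has semistable, totally split (Mumford curve) reduction at $\infty$ --- the rigid-analytic uniformization of Drinfeld modular curves by the Drinfeld upper half plane --- so $J_\Gamma$, hence $A_C$, has totally toric, i.e.\ totally degenerate, reduction at $\infty$.

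The whole weight of the argument lies in the geometrization step: identifying the Galois representation on the $\pi_f$-isotypic part of $H^1$ of the moduli of elliptic sheaves with the entire companion package $\bigoplus_{v\mid l}L_v$. This rests on Drinfeld's analysis of the $\ell$-adic cohomology of that moduli space --- the Ramanujan purity bound and the exact multiplicity of each cuspidal $\pi$ --- together with an Eichler--Shimura-type congruence matching unramified Hecke eigenvalues with Frobenius traces, which is what pins the summands down to the arithmetically defined companions rather than to arbitrary rank $2$ sheaves with the right traces. Granting this, the normalizations of the first step, the passage to $\pi$ and to the number field $E$, and the degeneration statement at $\infty$ are comparatively formal.
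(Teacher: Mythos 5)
The paper does not actually prove this statement: it is quoted as Drinfeld's theorem, with the deduction from Drinfeld's work delegated entirely to the citation \cite[Proposition 19, Remark 20]{snowden2018constructing}. Your sketch reconstructs precisely the route that citation packages --- local analysis at $\infty$, Drinfeld's $GL_2$ Langlands correspondence, realization of the special-at-$\infty$ cuspidal representation in $H^1$ of the moduli of elliptic sheaves (Drinfeld modular curves), a Hecke idempotent over $E$ cutting a $GL_2$-type abelian subscheme of the Jacobian, and rigid uniformization at $\infty$ for the total degeneration --- so in architecture you are doing the same thing the paper points to, with the genuinely hard inputs (Drinfeld's cohomology computation, multiplicity one, Eichler--Shimura) correctly identified as the load-bearing citations.

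One step of your write-up is misjustified as stated. You claim that since the inertia image at $\infty$ is infinite and $\det L$ is unramified there, ``after a finite extension of $\mathbb{F}_q$'' the restriction $L|_{I_\infty}$ is a single nontrivial unipotent block, hence $L$ is an unramified twist of Steinberg at $\infty$. Base change from $\mathbb{F}_q$ to $\mathbb{F}_{q^n}$ is unramified everywhere and does not alter the image of inertia, so it cannot remove the finite part of the quasi-unipotent action. What Grothendieck's theorem plus $\det L\cong\overline{\mathbb{Q}}_l(-1)$ actually give is $L|_{W_\infty}\cong\chi\otimes\mathrm{Sp}(2)$ with $N\neq 0$ and $\chi^2$ unramified at $\infty$, i.e.\ $\chi|_{I_\infty}$ of order at most $2$; the case of a ramified quadratic twist of Steinberg is not excluded by your argument and must be handled separately (e.g.\ by twisting $L$ by a global quadratic character --- which preserves the determinant --- running the construction, and untwisting the resulting abelian scheme, or by invoking the treatment in the cited reference). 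This does not change the overall strategy, but as written the reduction to ``$\pi_\infty$ special up to unramified twist,'' which is exactly the hypothesis needed to see $\pi$ in the cohomology of the moduli of elliptic sheaves, has a hole.
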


See \cite[Proof of Proposition 19, Remark 20]{snowden2018constructing} for how to recover this result from Drinfeld's work. (This amounts to combining \cite[Main Theorem, Remark 5]{drinfeld1983} with \cite[Theorem 1]{drinfeld1977}.)
For general smooth $X/\Fq$, an \emph{overconvergent $F$-isocyrstal} is a good $p$-adic analog of a lisse $l$-adic sheaf. Conjecture \ref{Conjecture:R2} can then be formulated in the $l=p$ case, replacing $L$ with $\calE\in\fisocd{X}_{\Qpbar}$, an overconvergent $F$-isocrystal with coefficients in $\Qpbar$. If $X$ is projective, then any $F$-isocrystal is automatically overconvergent. Combined with a refined form of Deligne's $p$-adic companions conjecture, we obtain the following application to Conjecture \ref{Conjecture:R2}.
\begin{cor*}[\ref{Corollary:rank_2_reduces_to_curves}] Let $X/\Fq$ be a smooth projective variety with $\dim(X)\geq 2$ and let $L$ be a rank 2 lisse $\Qlbar$ sheaf with cyclotomic determinant and infinite geometric monodromy. Then there exists an open subset $U\subset X$, whose complement has codimension at least 2, such that
\begin{itemize}
\item if $C\subset U$ is a smooth proper curve that is the complete intersection of smooth ample divisors;
\item if $L_C$ comes from an abelian scheme on $A_C\rightarrow C$, in the sense of Conjecture \ref{Conjecture:R2}; and
\item if all $p$-adic companions to $L$ exist,
\end{itemize} then $L_U$ comes from an abelian scheme $B_U\rightarrow U$, i.e., Conjecture \ref{Conjecture:R2} is true for $(X,L)$
\end{cor*}
In other words, if one believes the $p$-adic companions conjecture, then Conjecture \ref{Conjecture:R2} for a projective variety reduces to the case of a single sufficiently generic curve $C\subset X$. See Definition \ref{Definition:complete_set_of_companions} for the definition of a \emph{complete set of $p$-adic companions}. In general, the existence of a complete set of $p$-adic companions is a strong form of Deligne's \emph{petits camarades cristallin} conjecture: see Conjectures \ref{Conjecture:Deligne} and \ref{Conjecture:Companions}. When we place certain conditions on the splitting of $p$ in $E$, the field of traces of $\mathcal{E}$, the existence of a single $p$-adic
companion guarantees the existence of all of them by Corollary \ref{Corollary:all_companions_Galois_twists}.
%Therefore we have the following
%\begin{cor*}
%Let $S/\Fq$ be a smooth projective surface and let $\calE$ be an irreducible rank 2 overconvergent $F$-isocrystal on $S$ with cyclotomic determinant and infinite geometric monodromy. Suppose that the field $E$ of Frobenius traces of $\calE$ has $p$ inert. Suppose further that for all smooth ample curves $C\subset S$, the restriction $\calE_{C}$ to $C$ comes from a family of abelian varieties. Then $\calE$ comes from a family of abelian varieties.
%\end{cor*}

We briefly describe the strategy. First, we construct a (non-canonical) $p$-divisible group on $U\subset X$. Then, we use Serre-Tate theory to construct a formal (polarizable) abelian scheme over the formal scheme $X_{/C}$. Here the positivity of $C\subset X$ is used. Finally, using work of Grothendieck, Hartshorne, and Hironaka and the positivity of $C\subset X$, we globalize the family. As a key step, we record an affirmative answer to a question of Grothendieck \cite[4.9]{grothendieck1966theoreme}:
\begin{thm*}
(Corollary \ref{Corollary:grothendieck}) Let $X$ be a locally noetherian normal scheme and $U\subset X$ be an open dense subset whose complement has characteristic $p$. Let $A_{U}\rightarrow U$ be an abelian scheme. Then $A_{U}$ extends to an abelian scheme over $X$ if and only if $A_{U}[p^{\infty}]$ extends to a $p$-divisible group over $X$.
\end{thm*}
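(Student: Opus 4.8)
The ``only if'' direction is immediate: an abelian scheme $\mathcal{A}\to X$ extending $A_{U}$ gives the $p$-divisible group $\mathcal{A}[p^{\infty}]\to X$ extending $A_{U}[p^{\infty}]$. Conversely, suppose $\mathcal{G}\to X$ is a $p$-divisible group with $\mathcal{G}|_{U}\cong A_{U}[p^{\infty}]$. An extension of $A_{U}$ to an abelian scheme over $X$, if it exists, is unique up to unique isomorphism --- two such are flat and proper over the normal scheme $X$ and agree over the schematically dense open $U$, so the comparison isomorphism spreads out --- hence the problem is local on $X$, and a standard d\'evissage (peeling off the codimension-one points of $X\setminus U$, where the local rings are discrete valuation rings of residue characteristic $p$, and then invoking the extension theorems for abelian schemes across a closed subset of codimension $\geq 2$, where the already-given $p$-divisible group $\mathcal{G}$ together with Grothendieck's algebraization theorem do the work) reduces everything to the case $X=\operatorname{Spec}R$ with $R$ a complete discrete valuation ring with fraction field $K$, residue field $k$ of characteristic $p$, and $U=\operatorname{Spec}K$.

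\textbf{The DVR case.} So let $A/K$ be an abelian variety whose $p$-divisible group extends to a $p$-divisible group $\mathcal{G}$ over $R$; I must show $A$ has good reduction. By Grothendieck's semistable reduction theorem, after a finite separable extension $K'/K$ (with $R'\subseteq K'$ the corresponding complete DVR) the abelian variety $A_{K'}$ has semistable reduction; let $\mathcal{N}/R'$ be its N\'eron model, so that $\mathcal{N}^{0}$ is a semiabelian scheme whose special fibre is an extension $0\to T\to\mathcal{N}^{0}_{k'}\to B\to 0$ of an abelian variety $B$ by a torus $T$ of some rank $t$, with $\dim A=t+\dim B$ and good reduction of $A_{K'}$ equivalent to $t=0$. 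Since the toric rank of $\mathcal{N}$ descends along $R\to R'$, it is enough to prove $t=0$. By the theory of semiabelian degeneration (Raynaud, Mumford, Faltings--Chai), $\mathcal{N}^{0}$ extends $\tilde G$, the semiabelian $R'$-scheme with torus part $\mathbb{G}_{m}^{t}$ and abelian part the good-reduction model of $B_{K'}$, and there is an exact sequence $0\to\tilde G[p^{\infty}]\to A_{K'}[p^{\infty}]\to(\mathbb{Q}_{p}/\mathbb{Z}_{p})^{t}\to 0$ over $K'$ in which $\tilde G[p^{\infty}]$ is a genuine $p$-divisible group over $R'$ and the extension class is governed by the period parameters $q_{1},\dots,q_{t}\in K'^{\times}$ --- which have strictly positive valuation as soon as $t>0$.

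\textbf{The crux.} The plan is now to exploit rigidity of $p$-divisible groups over a DVR: the restriction functor from $p$-divisible groups over $R'$ to $p$-divisible groups over $K'$ is fully faithful. If $A_{K'}[p^{\infty}]$ extends over $R'$ (as it does, being $\mathcal{G}_{R'}$), then by full faithfulness the monomorphism $\tilde G[p^{\infty}]_{K'}\hookrightarrow A_{K'}[p^{\infty}]$ extends to a monomorphism $\tilde G[p^{\infty}]\hookrightarrow\mathcal{G}_{R'}$ of $p$-divisible groups over $R'$ with (necessarily \'etale) quotient, so the extension class above is the image of a class over $R'$; pushing forward along the toric quotient $\tilde G\to\mathbb{G}_{m}^{t}$ and comparing inside $H^{1}_{\mathrm{fppf}}(-,\mu_{p^{n}})$ shows that each $q_{i}$ lies in the image of $R'^{\times}\to K'^{\times}/(K'^{\times})^{p^{n}}$ for every $n$, i.e. $p^{n}\mid v(q_{i})$ for all $n$ --- impossible unless $v(q_{i})=0$. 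Hence $t=0$, $A_{K'}$ has good reduction, $\mathcal{N}$ is an abelian scheme, and it descends to an abelian scheme over $R$ whose $p$-divisible group is identified with $\mathcal{G}$ once more by full faithfulness.

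\textbf{Main obstacle.} The rigidity invoked in the last step is the heart of the matter. When $\operatorname{char}K=0$ it is Tate's theorem on $p$-divisible groups over complete DVRs of mixed characteristic, and when $\operatorname{char}K=p$ with perfect residue field it is de Jong's theorem via crystalline Dieudonn\'e module theory. What I expect to require genuine work --- and is exactly the case that arises at a codimension-one point of a variety over $\mathbb{F}_{q}$ --- is the equicharacteristic-$p$ statement over an \emph{imperfect} residue field; establishing it (by descent from the perfection, or directly through Dieudonn\'e crystals over the $p$-adic formal base) is where the real difficulty should lie. The globalization and the codimension-$\geq 2$ extension, by contrast, are essentially formal once the DVR case is in hand.
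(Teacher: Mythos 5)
There is a genuine gap, and it sits exactly where you declare the work to be ``essentially formal''. After handling the codimension-one points (DVRs), your d\'evissage appeals to ``extension theorems for abelian schemes across a closed subset of codimension $\geq 2$''; no such theorem exists in characteristic $p$ --- Grothendieck's own counterexample with supersingular abelian surfaces over $\mathbb{A}^{2}\setminus\{(0,0)\}$ \cite[Remarques 4.6]{grothendieck1966theoreme} shows extension can fail --- and the parenthetical that the given $p$-divisible group plus Grothendieck's algebraization theorem ``do the work'' is not an argument (to algebraize you would need a formal abelian scheme along the bad locus $Z\subset X\setminus U$, where no abelian scheme is given at all). Making the hypothesis $A_{U}[p^{\infty}]$-extends bear on the codimension $\geq 2$ locus is precisely the content of the proof in the paper: following \cite[pages 73--76]{grothendieck1966theoreme}, one rigidifies (polarization of degree $d$, level $l$), takes the closure $X'$ of the graph of $U\rightarrow\mathscr{A}_{g,d,l}$, proves $X'\rightarrow X$ proper by the valuative criterion and the DVR case, and then must show the classifying map contracts the fibers of $X'\rightarrow X$. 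That contraction step requires showing that an abelian scheme over a proper, geometrically connected (but possibly non-geometrically-reduced) scheme over a residue field $k(x)$ of characteristic $p$ whose Barsotti--Tate group is constant is itself constant (Lemma \ref{Lemma:descend_AV_via_BT}, Corollary \ref{Corollary:constant_BT_level_polarization}), which in turn needs Serre--Tate theory together with the formal density Lemma \ref{Lemma:formal_maps_agree}, Chow's $K/k$-trace, and descent of homomorphisms of BT groups through primary and purely inseparable extensions (Proposition \ref{Proposition:homomorphism_BT_primary_extension}) --- the residue fields here are imperfect, which is where the real care is needed. None of this is formal, and none of it appears in your proposal.

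Two further points on the DVR case, which the paper does not reprove but simply cites: in mixed characteristic it is \cite[Expos\'e IX, Theorem 5.10]{grothendieck2006groupes}, and in equal characteristic $p$ it is de Jong \cite[2.5]{de1998homomorphisms}, whose results already cover imperfect residue fields --- so the ``main obstacle'' you isolate is a known theorem rather than the open-ended difficulty you suggest. Your sketch of that case also has a hole at the end: proving the toric rank $t=0$ after the finite extension $K'/K$ only shows $A_{K'}$ has good reduction, i.e.\ $A$ has \emph{potentially} good reduction; the reduction of $A$ over $R$ itself could still be additive (unipotent), and toric rank being insensitive to $R\rightarrow R'$ does not rule this out. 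The assertion that the good model over $R'$ ``descends to an abelian scheme over $R$'' is exactly the point where the cited proofs do additional work (naive fppf descent fails because $R'\otimes_{R}R'$ need not be normal, so N\'eron-type extension of the descent isomorphisms is unavailable); likewise the claim that the quotient $\mathcal{G}_{R'}/\tilde G[p^{\infty}]$ is automatically \'etale over $R'$ is unjustified. So both halves of the proposal --- the local criterion and, more seriously, the globalization --- need the substance that the paper supplies.
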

Combined with algebraization techniques, there is the following useful consequence, which is a $p$-adic analog of Simpson's Theorem \ref{Theorem:simpson_lefschetz}. 
\begin{cor*}
(Corollary \ref{Corollary:AV_extends_BT_extends}) Let $X/\Fq$ be a smooth projective variety and let $C\subset X$ be a smooth curve that is the complete intersection of smooth ample divisors. Let $A_{C}\rightarrow C$ be an abelian scheme. Suppose there exists a Zariski open neighborhood $U\supset C$ of $X$ such that $A_{C}[p^{\infty}]$ extends to a Barsotti-Tate group $\calG_{U}$ on $U$. Then there exists a unique abelian scheme $A_{U}\rightarrow U$, extending $A_{C}$, such that $A_{U}[p^{\infty}]\cong\mathcal{G}_{U}$. 
\end{cor*}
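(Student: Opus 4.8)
The plan has three stages: construct a formal polarized abelian scheme along $D$ by Serre--Tate theory, algebraize it using that $D$ is very ample, and then propagate the result across all of $U$ by Corollary \ref{Corollary:grothendieck}. Uniqueness will come out of the uniqueness built into each stage.

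For the first stage, let $\mathcal{I}\subset\mathcal{O}_X$ be the ideal sheaf of $D$ and put $D_n=V(\mathcal{I}^{n+1})$, so that each $D_n\hookrightarrow D_{n+1}$ is a square-zero thickening of schemes of characteristic $p$ and Serre--Tate theory applies: to give an abelian scheme over $D_{n+1}$ deforming $A_{D_n}$ is the same as to give a deformation of $A_{D_n}[p^\infty]$ over $D_{n+1}$. Feeding in $\mathcal{G}_U|_{D_{n+1}}$ together with the inductively produced identification $\mathcal{G}_U|_{D_n}\cong A_{D_n}[p^\infty]$ (base case $\mathcal{G}_U|_D\cong A_D[p^\infty]$), one obtains a unique compatible system of abelian schemes $A_{D_n}\to D_n$ lifting $A_D$ with $A_{D_n}[p^\infty]\cong\mathcal{G}_U|_{D_n}$; these assemble into a formal abelian scheme $\mathfrak{A}\to\mathfrak{X}:=X_{/D}$ with $\mathfrak{A}[p^\infty]\cong\mathcal{G}_U|_{\mathfrak{X}}$. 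To make $\mathfrak{A}$ projective over $\mathfrak{X}$ one needs a formal polarization. Pick a polarization $\lambda_D\colon A_D\to A_D^\vee$; by Grothendieck--Messing, once the $p$-divisible groups are lifted as above, lifting $\lambda_D$ compatibly along the $D_n$ is the same as lifting the homomorphism of Barsotti--Tate groups $A_D[p^\infty]\to A_D^\vee[p^\infty]$ over $D$ to one over $\mathfrak{X}$ extending it --- which is exactly the content of the Lefschetz theorem for homomorphisms of Barsotti--Tate groups. The resulting $\lambda\colon\mathfrak{A}\to\mathfrak{A}^\vee$ is symmetric (its restriction to $D$ is $\lambda_D$ and the lift is unique) and relatively ample (an open condition, and $\mathfrak{X}$ has underlying space $D$), so $(\mathfrak{A},\lambda)$ is a formal polarized abelian scheme.

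Next, algebraize. Being polarized, $\mathfrak{A}$ is projective over $\mathfrak{X}=X_{/D}$; since $D$ is very ample in the projective variety $X$ and $\dim X\geq 2$, the algebraization theory for proper formal schemes over the formal completion of a projective variety along an ample divisor produces an honest polarized abelian scheme $A_V\to V$ over an open neighborhood $V$ of $D$ --- which we may take inside $U$ --- whose completion along $D$ recovers $(\mathfrak{A},\lambda)$. In particular $A_V|_D\cong A_D$, and since $A_V[p^\infty]$ and $\mathcal{G}_U|_V$ have isomorphic completions along $D$, the Lefschetz comparison for Barsotti--Tate groups yields $A_V[p^\infty]\cong\mathcal{G}_U|_V$ after shrinking $V$. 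Now $V$ is open dense in the noetherian normal scheme $U$, its complement has characteristic $p$, and $A_V[p^\infty]\cong\mathcal{G}_U|_V$ extends over $U$ to $\mathcal{G}_U$, so Corollary \ref{Corollary:grothendieck} gives an abelian scheme $A_U\to U$ extending $A_V$. Purity for Barsotti--Tate groups over the normal scheme $U$ (a Barsotti--Tate group is determined by its restriction to any dense open) identifies $A_U[p^\infty]\cong\mathcal{G}_U$, and $A_U|_D\cong A_V|_D\cong A_D$. For uniqueness, any abelian scheme over $U$ extending $A_D$ with $p$-divisible group $\mathcal{G}_U$ restricts, on a small neighborhood of $D$, to the Serre--Tate deformation $\mathfrak{A}$ by the argument above, hence agrees with $A_V$ on a neighborhood by faithfulness of completion, hence agrees with $A_U$ by the uniqueness of extensions in Corollary \ref{Corollary:grothendieck}.

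I expect the algebraization to be the main obstacle: passing from the formal polarized abelian scheme over $X_{/D}$ to an actual abelian scheme over a Zariski neighborhood of $D$ is precisely the point at which very ampleness of $D$ and $\dim X\geq 2$ are used, and it rests on the effectivity and algebraization theory for formal schemes near an ample divisor rather than on the naive Grothendieck existence theorem. The construction of the formal polarization is the other essential ingredient, and it is where the paper's Lefschetz theorem for homomorphisms of Barsotti--Tate groups enters.
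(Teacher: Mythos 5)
Your overall route is the paper's route: lift a quasi-polarization of $A_{D}[p^{\infty}]$ from $D$ to $\mathcal{G}_{U}$ using the Lefschetz theorem for homomorphisms of Barsotti--Tate groups (this is Corollary \ref{Corollary:quasi_polarizations_lift}, and note it only lifts $p^{n}\lambda_{D}$, not $\lambda_{D}$ itself --- harmless, but your claim that the lift restricts to $\lambda_{D}$ is slightly off, and the deformation-theoretic input is Serre--Tate in the form of \cite[Theorem 1.2.1]{katz1981serre} rather than Grothendieck--Messing), obtain a polarizable formal abelian scheme over $X_{/D}$ (Corollary \ref{Corollary:formal_abelian_scheme}), algebraize it near $D$, identify its $p$-divisible group with $\mathcal{G}$ on that neighborhood via the $LEF(X,D)$ property (Lemma \ref{Lemma_BT_extending_iso}), and finally extend over all of $U$ by Corollary \ref{Corollary:grothendieck}.

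The genuine gap is the algebraization step, which you correctly flag as the main obstacle but then dispatch by appeal to ``the algebraization theory for proper formal schemes over the formal completion of a projective variety along an ample divisor.'' No such off-the-shelf theory applies here: the classical Grothendieck existence theorem is inapplicable because $X_{/D}$ is not the formal spectrum of a complete noetherian ring, and the SGA2-style effectivity (LEFF) conditions along an ample divisor, which would let you algebraize the formal projective embedding coming from the polarization, are not available when $\dim X=2$ (only the section-comparison property $LEF(X,D)$ is, and that is what Lemma \ref{Lemma_BT_extending_iso} uses). The way the paper fills this hole is precisely by exploiting that the object being algebraized is \emph{polarized and of bounded moduli}: the polarized formal abelian scheme defines a formal map $X_{/D}\rightarrow\mathscr{A}_{g,d}$ to a finite-type stack with finite diagonal, and Lemma \ref{Lemma:algebraize_formal_map} (Litt's algebraization of formal maps along an ample divisor, which also supplies the uniqueness you invoke as ``faithfulness of completion'') algebraizes this map to a Zariski neighborhood $V$ of $D$; pulling back the universal family gives $A_{V}$. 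Without this moduli-map mechanism, or an equivalent substitute, your second stage is an assertion of the conclusion rather than a proof, since a general effectivity statement for proper (even polarized) formal schemes over $X_{/D}$ in dimension $2$ is exactly what one cannot quote.
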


Remark \ref{Remark:extension_dimension_bounds} shows that hypothesis on the dimension is necessary. The proof of Corollary \ref{Corollary:AV_extends_BT_extends} makes use of $p$-to-$l$ companions. See Remark \ref{Remark:extend_AV_general_field} and Corollary \ref{Corollary:extend_AV_extend_BT_ii} for what we know over general fields $k$ of characteristic $p$. We now make some general remarks on Conjecture \ref{Conjecture:R2}.
\begin{rem}Our motivation to formulate Conjecture \ref{Conjecture:R2} partly comes from the following celebrated theorem of Corlette-Simpson \cite[Theorem 11.2]{corlette2008classification}: 
\begin{thm*}
\label{Theorem:CS}(Corlette-Simpson) Let $X/\CC$ be a smooth, connected, quasi-projective variety and let $L$ be a rank 2 $\CC$-local system on $X$ such that
\begin{itemize}
\item $L$ has trivial determinant,
\item $L$ has quasi-unipotent monodromy along the divisor at $\infty$,
\item $L$ has Zariski-dense monodromy inside of $SL_{2}(\mathbb{C})$,
and
\item $L$ is rigid.
\end{itemize}
Then $L$ comes from a family of abelian varieties: there exists an abelian scheme $\pi\colon A_{X}\rightarrow X$ such that
\[
R^{1}\pi_{*}\CC\cong{\displaystyle \bigoplus_{\sigma\in\Sigma}}(^{\sigma}L)^{m}
\]
where $\Sigma\subset\text{Aut}(\mathbb{C})$ is a finite subset of automorphisms of $\mathbb{C}$ containing the identity, $^{\sigma}L$ is the local system obtained by applying $\sigma$ to the matrices in the associated $SL_{2}(\CC)$ representation, and $m\in\mathbb{N}$.
\end{thm*}

When $X$ is projective, $L$ being rigid means it yields an isolated (though not necessarily reduced) point in the character variety associated to $\pi_{1}(X)$. For general quasi-projective $X$, the notion of rigidity involves a character variety that remembers Jordan blocks of the various monodromies around $\infty$, see \cite[Section 6]{corlette2008classification} or \cite[Section 2]{esnault2017cohomologically} for a precise definition. Corlette-Simpson's theorem verifies the rank 2 case of a conjecture of Simpson that roughly states: rigid semi-simple $\CC$-local systems on smooth complex varieties are motivic. (When $X$ is proper, this is \cite[Conjecture 4]{simpson1990nonabelian}.)
\end{rem}
%In proving their theorem, Corlette-Simpson in fact prove that there
%is a map
%\[
%X\rightarrow\mathcal{M}
%\]
%where $\mathcal{M}$ is a ``polydisc DM Shimura stack'' that realizes $L$ and all of its complex conjugates, see \cite[Section 9]{corlette2008classification}.

\begin{rem}
\label{Remark:geometric_origin}It is conjectured that if $X/\Fq$ is an irreducible, smooth variety and $L$ is an irreducible lisse $\Qlbar$-sheaf with trivial determinant, then $L$ is ``of geometric origin'' up to a Tate twist (Esnault-Kerz attribute this to Deligne \cite[Conjecture 2.3]{esnault2011notes}, see also \cite[Question 1.4]{drinfeld2012conjecture}). More precisely, given such an $L$, it is conjectured that there exists an open dense subset $U\subset X$, a smooth projective morphism $\pi:Y_{U}\rightarrow U$, an integer $i$, and a rational number $j$ such that $L|_{U}$ is a sub-quotient (or even a summand) of $R^{i}\pi_{*}\Qlbar(j)$.
\end{rem}

\begin{rem}
Compared to Corlette-Simpson's theorem, there are two fewer hypotheses in Conjecture \ref{Conjecture:R2}: there is no ``quasi-unipotent monodromy at $\infty$'' condition and there is no rigidity condition. The former is automatic by Grothendieck's quasi-unipotent monodromy theorem \cite[Appendix]{serre1968good}.

As for the latter: the local systems showing up in Simpson's work are representations of a \emph{geometric fundamental group}, while the local systems occuring in Conjecture \ref{Conjecture:R2} are representations of an \emph{arithmetic fundamental group.} Let $X/\Fq$ be a smooth variety. Then it follows from  \cite{deligne2012finitude} that there are only finitely many isomorphism classes of irreducible lisse $\Qlbar$-sheaves that have trivial determinant, bounded rank, and bounded ramification \cite[Theorem 1.1]{esnault2012finiteness}.  For this reasons, the authors view such local systems as morally rigid.
\end{rem}

\begin{rem}
A further difference between Conjecture \ref{Conjecture:R2} and Corlette-Simpson's theorem is the intervention of an open set $U\subset X$ in the positive characteristic case. If $U\subset X$ is a Zariski open subset of a smooth and irreducible $\mathbb{C}$-variety with complement of codimension at least 2, then any map $U\rightarrow\mathscr{A}_{g}\otimes\mathbb{C}$ extends (uniquely) to a map $Y\rightarrow\mathscr{A}_{g}\otimes\mathbb{C}$ \cite[Corollaire 4.5]{grothendieck1966theoreme}. This extension property is not true in characteristic $p$; one may construct counterexamples using a family of supersingular abelian surfaces over $\mathbb{A}^{2}\backslash\{(0,0)\}$ \cite[Remarques 4.6]{grothendieck1966theoreme}. On the other hand, we do not know of a single example that requires the intervention of an open subset $U\subsetneq X$ in Conjecture \ref{Conjecture:R2}.
\end{rem}

\begin{rem}
There is recent work of Snowden-Tsimerman that characterizes those rank 2 $\overline{\mathbb Q}_l$ sheaves on a curve over a number field that come from a family of elliptic curves \cite{snowden2018constructing}. This work was very inspiring for us, but the techniques used there are rather different from those used here. In particular, they use Drinfeld's result modolu $p$ for infinitely many $p$, together with a Hilbert scheme argument based on the boundedness of maps between projective hyperbolic curves. Our techniques instead use deformation theory and algebraization.
\end{rem}
We briefly summarize the sections.
\begin{itemize}
\item Section \ref{Section:Coefficient_Objects} sets out definitions and conventions for lisse $l$-adic sheaves and (overconvergent) $F$-isocrystals. Following Kedlaya, if $X/\Fq$ is a smooth variety, we define a \emph{coefficient object} to be either a lisse $l$-adic sheaf or an overconvergent $F$-isocrystal.
\item Section \ref{Section:Compatible_systems} briefly describes the companions conjecture.
\item Section \ref{Section:BT groups} reviews what we need from the theory of Barsotti-Tate (a.k.a. $p$-divisible) groups.
\item Section \ref{Section:Grothendieck} relates extending abelian schemes with extending their $p$-divisible group. The main result is Corollary \ref{Corollary:grothendieck}.
\item Section \ref{Section:Lefschetz I} recalls a result of Hartshorne that allows us to globalize formal maps.
\item Section \ref{Section:Lefschetz 2} puts all of the ingredients together and contains a proof of of Theorem \ref{Theorem:main_lefschetz}.

\end{itemize}

%Work-in-progress is trying to generalize Theorem \ref{Theorem:main_theorem} to the non-projective case, with totally degenerate reduction; we
%hope to be able to generalize Drinfeld's Theorem \ref{Theorem:GL2}
%to the case of higher-dimensional bases, again assuming the existence
%of a complete set of crystalline companions.
The authors hope this work helps further reveal the geometric content of $p$-adic coefficient objects; more specifically that they are analogous to variations of Hodge structures. Deligne's Conjecture \ref{Conjecture:Deligne} was presumably formulated with the hope that such local systems are of geometric origin; here, we deduce that certain local systems are of geometric origin from the existence of $p$-adic companions. 
\begin{acknowledgement*}
This work was born at CIRM (in Luminy) at ``$p$-adic Analytic Geometry and Differential Equations''; the authors thank the organizers. R.K. warmly thanks Tomoyuki Abe, Ching-Li Chai, Marco d'Addezio, Philip Engel, H\'el\`ene Esnault, and especially Johan de Jong, with whom he had stimulating discussions on the topic of this article. R.K. also thanks Daniel Litt, who patiently and thoroughly explained several algebraization techniques; in particular, Litt explained the proof of  Lemma \ref{Lemma_BT_extending_iso}. R.K. thanks the arithmetic geometry group at FU Berlin for a lively and intellectually rich atmosphere. R.K. thanks Dino Lorenzini for comments on an earlier draft of this article. R.K. gratefully acknowledges financial support from the NSF under Grants No. DMS-1605825 and No. DMS-1344994 (RTG in Algebra, Algebraic Geometry and Number Theory at the University of Georgia).
\end{acknowledgement*}

\section{Notation and Conventions}
\begin{itemize}
\item The field with $p$ elements is denoted by $\Fp$ and $\FF$ denotes a fixed algebraic closure.
\item A variety $X/k$ is a geometrically integral scheme of finite type.
\item If $X$ is a scheme, then $X^{\circ}$ is the set of closed points.
\item If $k$ is a field, $l$ denotes a prime different than char$(k)$.
\item If $E$ is a number field, then $\lambda$ denotes an arbitrary prime of $E$.
\item An $\lambda$-adic local field is a \emph{finite }extension of $\QQ_{\lambda}$.
\item If $L/K$ is a finite extension of fields and $\mathcal{C}$ is a $K$-linear abelian category, then $\mathcal{C}_{L}$ is the base-changed category. If $M/K$ is an algebraic extension, then $\mathcal{C}_{M}$
is the 2-colimit of the categories $\mathcal{C}_{L}$ as $M$ ranges through the finite extensions of $K$ contained in $M$: $K\subset L\subset M$.
\item If $\calG\rightarrow S$ is a $p$-divisible (a.k.a. Barsotti-Tate) group, then $\DD(\calG)$ denotes the contravariant Dieudonn\'e crystal.
\item If $X/k$ is a smooth scheme of finite type over a perfect field $k$, then $\fisocd{X}$ is the category of overconvergent $F$-isocrystals on $X$. \item If $X$ is a noetherian scheme and $Z\subsetneq X$ is a non-empty closed subscheme, then $X_{/Z}$ is the formal completion of $X$ along $Z$.
\end{itemize}

\section{Coefficient Objects}\label{Section:Coefficient_Objects}

For a more comprehensive introduction to the material of this section, see the recent surveys of Kedlaya \cite{kedlaya2016notes,kedlayacompanions}; our notations are consistent with his (except for us $l\neq p$ and $\lambda$ denotes an arbitrary prime). Throughout this section, $k$ denotes a perfect field, $W(k)$ the ring of Witt vectors, $K(k)$ the field of fractions of $W(k)$, and $\sigma$ is the canonical lift of absolute Frobenius on $k$.

Let $X/\Fq$ be a normal variety and $\FF$ a fixed algebraic closure of $\Fq$. Denote by $\overline{X}$ the base change $X\times_{\Spec{\Fq}}\Spec{\FF}$. We have the following homotopy exact sequence
\[
0\rightarrow\pi_{1}(\overline{X})\rightarrow\pi_{1}(X)\rightarrow\text{Gal}(\mathbb{F}/\mathbb{F}_{q})\rightarrow 0
\]
(suppressing the implicit geometric point required to define $\pi_{1}$). The profinite group $\text{Gal}(\FF/\Fq)$ has a dense subgroup $\mathbb{Z}$ generated by the Frobenius. The inverse image of this copy of $\mathbb{Z}$ in $\pi_{1}(X)$ is called the
\emph{Weil Group $W(X)$ }\cite[1.1.7]{deligne1980conjecture}. The Weil group is given a topology where $\mathbb{Z}$ is discrete; this is not the subspace topology from $\pi_{1}(X)$.
\begin{defn}
\cite[1.1.12]{deligne1980conjecture} Let $X/\Fq$ be a normal variety and $K$ an $l$-adic local field. A \emph{(lisse) Weil sheaf of rank $r$ with coefficients in $K$ } is a continuous representation $W(X)\rightarrow GL_{r}(K)$. A \emph{(lisse) \'etale
sheaf of rank $r$ with coefficients in $K$} is a continuous representation $\pi_{1}(X)\rightarrow GL_{r}(K)$. We denote the category of Weil sheaves with coefficients in $K$ by $\weil{X}_{K}$.
\end{defn}

Every lisse \'etale sheaf yields a lisse Weil sheaf. Conversely, any lisse Weil sheaf with finite determinant is the restriction to $W(X)$ of an \'etale sheaf \cite[1.3.4]{deligne1980conjecture}.

\begin{defn}
\label{Definition:extend_scalars}Let $\mathcal{C}$ be a $K$-linear additive category, where $K$ is a field \cite[Tag 09MI]{stacks-project}. Let $L/K$ be a finite field extension. We define the base-changed category $\calC_{L}$ as follows:
\begin{itemize}
\item Objects of $\calC_{L}$ are pairs $(M,f)$, where $M$ is an object of $\mathcal{C}$ and $f\colon L\rightarrow \End_{\calC}M$ is a homomorphism of $K$-algebras. We call such an $f$ an $L$-structure on $M$.
\item Morphisms of $\calC_{L}$ are morphisms of $\calC$ that are compatible with the $L$-structure.
\end{itemize}
\end{defn}

\begin{fact}
\label{Fact:extension_of_scalars}Let $\calC$ be a $K$-linear abelian category and let $L/K$ be a finite field extension. Then there are functors
\[
Ind_{K}^{L}\colon\mathcal{C}\leftrightarrows\mathcal{C}_{L}\colon Res_{K}^{L}
\]
called induction and restriction. Restriction is right adjoint to induction. Both functors send semi-simple objects to semi-simple objects. If $\calC$ has the structure of a Tannakian category, then so does $\calC_L$. In this case, $Ind^L_K$ preserves rank and $Res^L_K$ multiplies the rank by $[L:K]$.
\end{fact}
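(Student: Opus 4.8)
The plan is to construct the two functors by hand, check that $\mathcal{C}_{L}$ is abelian, establish the adjunction by exhibiting a unit and counit, and then deduce the statement about semisimple objects by reduction to the simple case.

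Fix a $K$-basis $e_{1}=1,e_{2},\dots,e_{n}$ of $L$, with $n=[L:K]$. Let $\mathrm{Res}_{K}^{L}\colon\mathcal{C}_{L}\to\mathcal{C}$ be the forgetful functor $(M,f)\mapsto M$. Define $\mathrm{Ind}_{K}^{L}(M):=M^{\oplus n}$ --- a concrete model of ``$M\otimes_{K}L$'' --- equipped with the $L$-structure $f_{M}\colon L\to\mathrm{End}_{\mathcal{C}}(M^{\oplus n})$ that sends $\ell$ to the endomorphism given, via $\mathcal{C}$-linearity, by the matrix of multiplication-by-$\ell$ on $L$ in the basis $(e_{i})$; on morphisms $\mathrm{Ind}_{K}^{L}$ acts diagonally. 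This is independent of the basis up to canonical isomorphism and is functorial. For a morphism $(M,f)\to(M',f')$ in $\mathcal{C}_{L}$, its $\mathcal{C}$-kernel and $\mathcal{C}$-cokernel carry unique $L$-structures making them the kernel and cokernel in $\mathcal{C}_{L}$ (the action of each $\ell\in L$ commutes with the morphism, hence descends), so $\mathcal{C}_{L}$ is abelian and $\mathrm{Res}_{K}^{L}$ is exact; being also faithful, it is conservative and reflects exact sequences. Since $\mathrm{Res}_{K}^{L}\circ\mathrm{Ind}_{K}^{L}(M)\cong M^{\oplus n}$, it follows that $\mathrm{Ind}_{K}^{L}$ is exact as well.

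For the adjunction, define $\eta_{M}\colon M\to\mathrm{Res}\,\mathrm{Ind}(M)=M^{\oplus n}$ as the inclusion of the summand indexed by $e_{1}=1$, and, for $(N,h)\in\mathcal{C}_{L}$, define $\varepsilon_{(N,h)}\colon\mathrm{Ind}\,\mathrm{Res}(N,h)=N^{\oplus n}\to N$ as $\sum_{i}h(e_{i})\circ\pi_{i}$, where $\pi_{i}$ is the $i$-th projection; since $h$ is a $K$-algebra homomorphism, $\varepsilon_{(N,h)}$ is a morphism in $\mathcal{C}_{L}$. One checks directly that $\phi\mapsto\phi\circ\eta_{M}$ and $\psi\mapsto\sum_{i}(h(e_{i})\circ\psi)\circ\pi_{i}$ are mutually inverse natural bijections $\mathrm{Hom}_{\mathcal{C}_{L}}(\mathrm{Ind}_{K}^{L}(M),(N,h))\cong\mathrm{Hom}_{\mathcal{C}}(M,N)$ (the verification that $\widehat{\psi}$ is $L$-linear and that the two composites are the identity uses only that $h$ is an algebra map and that $e_{1}=1$), so $\mathrm{Res}_{K}^{L}$ is right adjoint to $\mathrm{Ind}_{K}^{L}$.

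It remains to treat semisimple objects; by additivity it suffices to treat simple objects. If $Y=(N,h)$ is simple in $\mathcal{C}_{L}$ and $0\neq S\subseteq N$ is a simple subobject in $\mathcal{C}$ (which exists in the finite-length categories relevant to this paper), then $\sum_{i}h(e_{i})(S)$ is a nonzero $L$-stable subobject of $Y$, hence equals $N$; being a finite sum of homomorphic images of the simple object $S$ it is semisimple in $\mathcal{C}$, so $\mathrm{Res}_{K}^{L}$ preserves semisimplicity. For $\mathrm{Ind}_{K}^{L}$: if $M$ is semisimple then $\mathrm{Res}\,\mathrm{Ind}(M)\cong M^{\oplus n}$ is semisimple in $\mathcal{C}$, so it is enough to show that $\mathrm{Res}(Y)$ semisimple implies $Y$ semisimple. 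Given an $L$-stable subobject $Y'\subseteq Y$, choose a $\mathcal{C}$-linear retraction $p_{0}$ of the inclusion $\mathrm{Res}(Y')\hookrightarrow\mathrm{Res}(Y)$ (possible since $\mathrm{Res}(Y)$ is semisimple) and average it into an $L$-linear one, $p:=\sum_{j}h(a_{j})\circ p_{0}\circ h(b_{j})$, where $h$ denotes the $L$-action and $\sum_{j}a_{j}\otimes b_{j}\in L\otimes_{K}L$ is the separability idempotent of $L/K$ (so $\sum_{j}a_{j}b_{j}=1$ and $(\ell\otimes1)\sum_{j}a_{j}\otimes b_{j}=(1\otimes\ell)\sum_{j}a_{j}\otimes b_{j}$ for all $\ell$); a short computation shows $p$ is an $L$-linear retraction, so $Y'$ is a direct summand of $Y$ in $\mathcal{C}_{L}$ and $Y$ is semisimple. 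I expect this last Maschke-type step to be the point requiring care: the separability idempotent exists precisely because $L/K$ is separable --- automatic here, since every coefficient field is a number field or a local field of characteristic zero --- and without it the conclusion genuinely fails, already for $\mathcal{C}$ the category of $L$-vector spaces with $L/K$ purely inseparable, where $\mathrm{Ind}_{K}^{L}$ applied to the simple object $L$ is $L\otimes_{K}L$, which is non-reduced and hence not semisimple over $L\otimes_{K}L$.
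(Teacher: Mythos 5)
Your construction is correct, but it takes a genuinely more self-contained route than the paper, whose proof of this Fact is a pure citation: the functors are taken from \cite[Section 3]{krishnamoorthy2017rank} and the semisimplicity statement from \cite[Corollary 3.12]{krishnamoorthy2017rank}. You instead build $\mathrm{Ind}_{K}^{L}(M)=M^{\oplus n}$ with the regular-representation $L$-structure, verify the adjunction by an explicit unit and counit (your formulas do check out, using only that $h$ is a $K$-algebra map and $e_{1}=1$), and prove the semisimplicity claims directly: a Clifford-style argument via the $L$-stable subobject $\sum_{i}h(e_{i})(S)$ for restriction, and a Maschke-type averaging against the separability idempotent of $L/K$ for induction. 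Two remarks on scope. First, your point that the induction half genuinely requires $L/K$ separable is correct and valuable: the example $\mathcal{C}=L\text{-Vect}$ with $L/K$ purely inseparable, where $\mathrm{Ind}_{K}^{L}(L)=L\otimes_{K}L$ is an indecomposable nonsemisimple module over the nonreduced ring $L\otimes_{K}L$, shows the Fact as literally stated (arbitrary finite extension) fails; this is harmless for the paper, since every coefficient field appearing here is an extension of $\mathbb{Q}_{l}$ or $\mathbb{Q}_{p}$, hence of characteristic zero. Second, your restriction argument invokes the existence of a simple $\mathcal{C}$-subobject of $\mathrm{Res}(Y)$, an extra finite-length hypothesis not present in the statement; you flag this honestly, and it holds in $\textbf{Weil}(X)_{K}$ and $\textbf{F-Isoc}^{\dagger}(X)_{K}$, the only categories to which the paper applies the Fact. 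So what your approach buys is an elementary, verifiable proof of a correctly qualified version of the statement in exactly the cases needed, where the paper defers entirely to an external reference; what it costs is a slight narrowing of the stated generality, which, given your counterexample, the statement could not have had anyway.
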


\begin{proof}
The restriction functor is given by forgetting the $L$-structure. For a description of induction and restriction functors, see \cite[Section 3]{krishnamoorthy2017rank}. The fact about semi-simple objects follows immediately from \cite[Corollary 3.12]{krishnamoorthy2017rank}. For the fact that $\calC_L$ is Tannakian, see \cite{deligne2014semi}.
\end{proof}
Given an object $M$ of $\calC$, we will sometimes write $M_{L}$ or $M\otimes_{K}L$ for $Ind_{K}^{L}M$.

The category of $\Ql$-Weil sheaves, $\weil{X}$, is naturally an $\Ql$-linear neutral Tannakian category, and $\weil{X}_{K}\cong(\weil{X})_{K}$, where the latter denotes the ``based-changed category'' as above. We define $\weil{X}_{\Qlbar}$ as the 2-colimit of
$\weil{X}_K$ as $K\subset\Qlbar$ ranges through the finite extensions of $\Ql$ inside of $\Qlbar$. Alternatively $\weil{X}_{\Qlbar}$ is the category of continuous, finite dimensional representations of $W(X)$ in $\Qlbar$-vector spaces where $\Qlbar$ is equipped with the colimit topology.
\begin{defn}\label{Definition:char_poly_lisse}
Let $X/\Fq$ be a smooth variety and let $x\in X^{\circ}$ be a closed point of $X$. Given a Weil sheaf $L$ on $X$ with coefficients in $\Qlbar$, by restriction we get a Weil sheaf $L$ on $x$. Denote by $Fr_{x}$ the geometric Frobenius at $x$. Then $Fr_{x}\in W(x)$ and we define $P_{x}(L,t)$, \emph{the characteristic polynomial of $L$ at $x$,} to be
$$P_x(L,t):=\det(1-Fr_{x}t|L_{x}).$$
\end{defn}

Now, let $X/k$ be a scheme of finite type over a perfect field. Berthelot has defined the absolute crystalline site on $X$: for a reference, see \cite[TAG 07I5]{stacks-project}. (We implicitly take the crystalline site with respect to $W(k)$ without further comment; in other words, in the formulation of the Stacks Project, $S=\Spec{W(k)}$ with the canonical divided power structure.) Let $Crys(X)$ be the category of crystals in \emph{finite locally free $\mathcal{O}_{X/W(k)}$-modules}. To make this more concrete, we introduce the following notation. A PD test object is a triple 
\[
(R,I,(\gamma_{i}))
\]
where $R$ is a $W(k)$ algebra with $I$ a nilpotent ideal such that $\Spec{R/I}$ ``is'' a Zariski open of $X$, and $(\gamma_{i})$ is a PD structure on $I$. Then a crystal in finite locally free modules $M$ on $X$ is a rule with input a PD test object $(R,I,(\gamma_{i}))$ and output a finitely generated projective $R$ module
\[
M_{R}
\]
that is functorial: the pullback maps with respect to morphisms of PD test-objects are isomorphisms. In this formulation, the crystalline structure sheaf $\mathcal{O}_{X/W(k)}$ has the following description: on input $(R,I,(\gamma_{i}))$, the sheaf $\mathcal{O}_{X/W(k)}$ has as output the ring $R$. 

By functoriality of the crystalline topos, the absolute Frobenius $Frob\colon X\rightarrow X$ gives a functor $Frob^{*}\colon Crys(X)\rightarrow Crys(X)$. 
\begin{defn}
\cite[TAG 07N0]{stacks-project} A (non-degenerate) $F$\emph{-crystal} on $X$ is a pair $(M,F)$ where $M$ is a crystal in finite locally free modules over the crystalline site of $X$ and $F\colon Frob^{*}M\rightarrow M$ is an injective map of crystals.
\end{defn}

We denote the category of $F$-crystals by $FC(X)$; it is a $\ZZ_{p}$-linear category with an internal $\otimes$ but without internal homs or duals in general. There is a object $\ZZ_{p}(-1)$, given by the pair $(\mathcal{O}_{X/W(k)},p)$. We denote by $\ZZ_{p}(-n)$ the $n$th tensor power of $\mathbb{Z}_{p}(-1)$.
\begin{notation}
\cite[Definition 2.1]{kedlaya2016notes} Let $X/k$ be a scheme of finite type over a perfect field. We denote by $\fisoc{X}$ the category of \emph{(convergent) $F$-isocrystals on $X$}.
\end{notation}

$\fisoc{X}$ is a $\Qp$-linear Tannakian category. Denote by $\Qp(-n)$ the image of $\ZZ_{p}(-n)$ and by $\Qp(n)$ the dual of $\Qp(-n)$. There is a notion of the rank of an $F$-isocrystal that satisfies that expected constraints given by $\otimes$ and $\oplus$. Unfortunately, $\fisoc{X}$ is not simply the isogeny category of $FC(X)$; however, it is the isogeny category of the category of $F$-crystals in \emph{coherent modules} (rather than finite, locally free modules). There is a natural functor $FC(X)\rightarrow\fisoc{X}$ \cite[2.2]{kedlaya2016notes}. 

For general smooth $X/k$ over a perfect field, it seems that there are two $p$-adic analogs of a lisse $l$-adic sheaf: a (convergent) $F$-isocrystal and an \emph{overconvergent $F$-isocrystal}. For our purposes (and following \cite{crew1992f}), overconvergent $F$-isocrystals are a better analog.
\begin{notation}
\cite[Definition 2.7]{kedlaya2016notes} Let $X/k$ be a smooth variety over a perfect field. We denote by $\fisocd{X}$ the category of \emph{overconvergent $F$-isocrystals on $X$}.
\end{notation}

The category $\textbf{F-Isoc}^{\dagger}(X)$ is a $\Qp$-linear Tannakian category. There is a natural forgetful functor
\[
\fisocd{X}\rightarrow\fisoc{X}
\]
which is fully faithful in general \cite[Theorem 1.1]{kedlaya2001full} and an equivalence of categories when $X$ is proper. 
\begin{notation}
For any finite extension $L/\Qp$, we denote by $\fisocd{X}_{L}$ the base-changed category a.k.a. \emph{overconvergent $F$-isocrystals with coefficients in $L$.}
\end{notation}
\begin{defn}
Let $X/k$ be a smooth scheme over a perfect field. Let $L$ be a $p$-adic local field and let $\calE$ be an object of $\fisocd{X}_L$. We say $\calE$ is \emph{effective} if the object $Res_{\Qp}^{L}\calE$ is in the essential image of the functor
\[
FC(X)\rightarrow\fisocd{X}
\]
\end{defn}

Being effective is equivalent to the existence of a ``locally free lattice stable under $F$''. See Lemma \ref{Lemma:Katz} for the basic result on effectivity.

Let $k$ be a perfect field. Then the category of $F$-crystals on $\text{Spec}(k)$ is equivalent to the category of finite free $W(k)$-modules $M$ together with a $\sigma$-linear injective map
\[
F\colon M\rightarrow M.
\]
(Morphisms in this category are defined to be $W(k)$-linear maps that commute with the $F$s.) Similarly, the category of $F$-isocrystals on $\text{Spec}(k)$ is equivalent to the category of finite dimensional $K(k)$-vector space $V$ together with a $\sigma$-linear bijective map
\[
F\colon V\rightarrow V,
\]
where morphisms are defined in the obvious way.
%For general facts about $\textbf{F-Isoc}(k)$, see \cite[Section 5]{krishnamoorthy2017rank}. 
The rank of $(V,F)$ is the rank of $V$ as a vector space. Let $L/\Qp$ be a finite extension. Then $\fisoc{k}_L$ is equivalent to the following category: objects are pairs $(V,F)$ where $V$ is a finite free $K(k)\otimes_{\Qp}L$ module and $F$ is a $\sigma\otimes1$-linear bijective map
\[
F\colon V\rightarrow V
\]
and morphisms are maps of $K(k)\otimes_{\Qp}L$-modules that commute with $F$ \cite[Proposition 5.12]{krishnamoorthy2017rank}. Note that $K(k)\otimes_{\Qp}L$ is not necessarily a field. There is also a direct-sum decomposition of abelian categories:
\[
\fisoc{k}_{L}\cong\bigoplus_{\lambda\in\QQ_{\geq0}}\fisoc{k}_{L}^{\lambda}
\]
which is inherited from the analogous decomposition of $\fisoc{k}$. Here, $\fisoc{k}_{L}^{\lambda}$ is the (thick) abelian sub-category with objects isoclinic of slope $\lambda$. 
%\begin{fact}
%\label{Facts:ind_res_isoc}Let $k$ be a perfect field and let $L/K$ be a finite extension of $p$-adic local fields. Then the adjoint functors
%\[
%Ind_{K}^{L}\colon\fisoc{k}_{K}\leftrightarrows\fisoc{k}_{L}\colon Res_{K}^{L}
%\]
% satisfy the following properties
%\begin{itemize}
%\item $Ind_{K}^{L}$ preserves the rank and $Res_{K}^{L}$ multiplies the rank by $[L:K]$.
%\item Let $M$ be an object of $\fisoc{k}_{L}$. Then the (multiset of) slopes of $Res_{K}^{L}M$ are the slopes of $M$ repeated $[L:K]$ times.
%\end{itemize}
%\end{fact}

\begin{notation}
We denote by $\fisocd{X}_{\Qpbar}$ the 2-colimit of the base-changed categories over all finite extensions $\Qp\subset L\subset\Qpbar$, via the functors $Ind_{\Qp}^{L}$. This is the category of \emph{overconvergent $F$-isocrystals on $X$ with coefficients in $\Qpbar$}.
\end{notation}

\begin{rem}
When $k$ is a perfect field, $\fisoc{k}_{\Qpbar}$ has the following description. Objects are pairs $(V,F)$ where $V$ is a finite free $K(k)\otimes_{\Qp}\Qpbar$-module and $F\colon V\rightarrow V$ is a bijective, $\sigma\otimes1$-linear map. Morphisms are $K(k)\otimes_{\Qp}\Qpbar$-linear maps that commute with $F$. 
\end{rem}

Let $L/\Qp$ be an algebraic extension, let $k\cong\FF_{p^{d}}$ and let $(V,F)$ be an object of $\fisoc{k}_L$. Then $F^{d}$ acts as a linear map on $V$. We let $P((V,F),t)$, the \emph{characteristic polynomial of Frobenius}, be $\det(1-(F^{d})t|V)$. By \cite[Proposition 6.1]{krishnamoorthy2017rank}
\[
P((V,F),t)\in L[t]
\]

\begin{defn}
Let $X/\Fq$ be a smooth variety, let $L/\Qp$ be an algebraic extension, and let $\calE\in\fisocd{X}_L$.
Let $x\in X^{\circ}$ with residue field $\FF_{p^{d}}$. Define $P_{x}(\calE,t)$, \emph{the characteristic polynomial of $\calE$
at $x$}, to be $\det(1-(F^{d})t|\calE_{x})$.
\end{defn}

\begin{notation}
\cite[Notation 1.1.1]{kedlayacompanions} Let $X/\Fq$ be a smooth connected variety. A \emph{coefficient object} is an object either of $\weil{X}_{\Qlbar}$ or of  $\fisocd{X}_{\Qpbar}$. We informally call the former the \emph{\'etale case} and the latter
the \emph{crystalline case}. 
\end{notation}

Given an algebraic extension $\Ql \subset K\subset\Qlbar$, objects of $\weil{X}_K$ may be considered as \'etale coefficient objects. Similarly, given an algebraic extension $\Qp\subset K\subset \Qpbar$, then objects of $\fisocd{X}_K$ may be considered as
crystalline coefficient objects via the induction functor.
\begin{defn}
Let $\calE$ be an \'etale (resp. crystalline) coefficient object. Let $K$ be a subfield of $\Qlbar$ (resp. of $\Qpbar$) containing $\Ql$ (resp. containing $\Qp$). The following three equivalent phrases
\begin{itemize}
\item $\calE$ has \emph{coefficients in} $K$
\item $\calE$ has \emph{coefficient field} $K$
\item $\calE$ is a $K$\emph{-coefficient object}
\end{itemize}
mean that $\calE$ may be descended to $\weil{X}_K$ (resp. $\fisocd{X}_K$).
\end{defn}

Note that $\calE$ having coefficient field $L$ does not preclude $\calE$ from having coefficient field $K$ for some sub-field $K\subset L$. 

\begin{defn}
\cite[Definition 1.1.5]{kedlayacompanions} Let $X/\Fq$ be a smooth variety and let $\calF$ be a coefficient object on $X$ with coefficients in $\Qlambar$. We say $\calF$ is \emph{algebraic} if $P_{x}(\mathcal{F},t)\in\overline{\mathbb{Q}}[t]\subset \Qlambar[t]$ for all $x\in X^{\circ}$. Let $E\subset\Qlambar$ be a number field. We say $\calF$ is $E$\emph{-algebraic } if $P_{x}(\calF,t)\in E[t]$ for all $x\in X^{\circ}$.
\end{defn}

Colloquially, semi-simple coefficient objects are determined by the characteristic polynomials of Frobenius at all closed points. In the \'etale case, this is a consequence of the Brauer-Nesbitt theorem and the Chebotar\"ev density theorem. In the crystalline case, the argument is more subtle and is due to Tsuzuki \cite[A.4.1]{abe2013langlands}. See also a proof of this fact in \cite{hartlpal} which is closer in spirit to the original proof and uses a $p$-adic variant of the Chebotar\"ev density theorem.
\begin{thm}
\label{Theorem:Brauer_Nesbitt_coefficients}\cite[Theorem 3.3.1]{kedlayacompanions} Let $X/\Fq$ be a smooth variety. Let $\calF$ be a semi-simple coefficient object. Then $\calF$ is determined, up to isomorphism, by $P_{x}(\calF,t)$ for all $x\in X^{\circ}$.
\end{thm}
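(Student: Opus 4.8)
The plan is to treat the two kinds of coefficient object separately: in the \emph{\'etale case} the statement is a classical consequence of the Chebotarev density theorem and the Brauer--Nesbitt theorem, while in the \emph{crystalline case} it is a theorem of Tsuzuki which we import. The \'etale half is routine; the crystalline half is where the real content lies.

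For the \'etale case, write $\mathcal{F}$ as a semisimple object of $\textbf{Weil}(X)_{K}$ for some $l$-adic local field $K\subset\overline{\mathbb{Q}}_{l}$, i.e. a continuous semisimple representation $\rho\colon W(X)\to GL_{r}(K)$, and likewise $\rho'$ for a second object $\mathcal{F}'$ with the same local characteristic polynomials. The assignment $g\mapsto\det(1-\rho(g)t)$ is a continuous class function on $W(X)$ with values in the finite-dimensional $K$-vector space of polynomials of degree $\le r$, which is Hausdorff; hence it is determined by its values on any subset whose saturation under conjugacy is dense. By Chebotarev density the union of the conjugacy classes of the Frobenius elements $\{Fr_{x}\}_{x\in X^{\circ}}$ is such a dense subset of $W(X)$ (cf. \cite{deligne1980conjecture}), so the hypothesis $P_{x}(\mathcal{F},t)=P_{x}(\mathcal{F}',t)$ for all $x\in X^{\circ}$ forces $\det(1-\rho(g)t)=\det(1-\rho'(g)t)$ for every $g\in W(X)$. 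Two semisimple representations of a group (equivalently, of the group algebra $K[W(X)]$) with equal characteristic polynomials on every element are isomorphic by the Brauer--Nesbitt theorem, so $\mathcal{F}\cong\mathcal{F}'$.

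For the crystalline case, $\mathcal{F}$ is a semisimple object of $\textbf{F-Isoc}^{\dagger}(X)_{\overline{\mathbb{Q}}_{p}}$, and the assertion is exactly \cite[A.4.1]{abe2013langlands}, due to Tsuzuki, which I would invoke directly. I would recall the shape of that argument: decomposing into isotypic components reduces one to showing that two \emph{irreducible} overconvergent $F$-isocrystals $\mathcal{E},\mathcal{E}'$ with $P_{x}(\mathcal{E},t)=P_{x}(\mathcal{E}',t)$ for all $x$ are isomorphic; by Schur's lemma this is equivalent to producing a nonzero Frobenius-equivariant horizontal map $\mathcal{E}\to\mathcal{E}'$, i.e. a Frobenius-fixed class in $H^{0}_{\mathrm{rig}}(X,\mathcal{E}^{\vee}\otimes\mathcal{E}')$, which one extracts from point counts via the Lefschetz trace formula in rigid cohomology together with purity to control the higher cohomology, after a d\'evissage to smooth curves using full faithfulness of restriction to a sufficiently general curve.

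The main obstacle is precisely the crystalline case: unlike the \'etale setting there is no ambient Weil or profinite group acting on an $F$-isocrystal whose Frobenius conjugacy classes one can appeal to, so the clean representation-theoretic argument is unavailable, and recovering even $\dim H^{0}$ of an internal hom from Frobenius traces at closed points genuinely requires the weight formalism of $p$-adic cohomology. For this reason I would present the crystalline half as a citation to \cite[A.4.1]{abe2013langlands} rather than attempt to reprove it.
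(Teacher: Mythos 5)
Your proposal matches the paper's treatment: the paper gives no detailed argument, simply attributing the \'etale case to the Chebotarev density theorem plus Brauer--Nesbitt and the crystalline case to Tsuzuki via \cite[A.4.1]{abe2013langlands}, which is exactly the split and the citation you use. Your added sketches (including the reduction of Weil-group representations to the profinite setting implicit in the Chebotarev step, and the outline of Tsuzuki's argument) are consistent with the intended reasoning, so there is nothing to correct.
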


\section{Compatible Systems and Companions}\label{Section:Compatible_systems}
\begin{defn}
\label{Definition:Companion}Let $X/\Fq$ be a smooth variety. Let $\calE$ and $\calE'$ be algebraic coefficient objects on $X$ with coefficients in $\Qlambar$ and $\Qlambarp$ respectively. Fix a field isomorphism
$\iota\colon\Qlambar\rightarrow\Qlambarp$. We say $\mathcal{E}$ and $\mathcal{E}'$ are \emph{$\iota$-companions} if $$\iota (P_{x}(\calE,t))=P_{x}(\calE',t)$$
for all $x\in X^{\circ}$. We say that $\mathcal{E}$ and $\mathcal{E}'$ are companions if there exists an isomorphism $\iota\colon \Qlambar\rightarrow \Qlambarp$ that makes them $\iota$-companions.
\end{defn}
In other words, two coefficient objects are $\iota$-companions if, under $\iota$, the characteristic polynomials of Frobenius match up.

\begin{rem}
\label{Remark:companions_l-adic}For convenience, we spell out the notion of companions for lisse $l$-adic sheaves. Let $X/\Fq$ be a smooth variety and let $L$ and $L'$ be lisse $\Qlbar$ and $\overline{\QQ}_{l'}$
sheaves respectively. (Here $l$ may equal $l'$.) We say they are companions if there exists a field isomorphism $\iota\colon \Qlbar\rightarrow\overline{\QQ}_{l'}$ such that for every closed point $x$ of $X$, there is an equality of polynomials of Frobenius at $x$ (see Definition \ref{Definition:char_poly_lisse}):
\[
\iota(P_{x}(L,t))=P_{x}(L',t)\in\overline{\QQ}_{l'}[t],
\]
and furthermore this polynomial is in $\overline{\QQ}[t]$. 
\end{rem}

\begin{rem}
Note that the $\iota$ in Definition \ref{Definition:Companion} and Remark \ref{Remark:companions_l-adic} does not reference the topology of $\Qpbar$ or $\Qlbar$. In particular, $\iota$ need not be continuous and in fact cannot be continuous if $l\neq p$.
\end{rem}

\begin{defn}
Let $X/\Fq$ be a smooth variety and let $E$ be a number field. Then an \emph{$E$-compatible system} is a system of lisse $E_{\lambda}$-sheaves $(\calF_{\lambda})_{\lambda\nmid p}$
over primes $\lambda\nmid p$ of $E$ such that for every $x\in X^{\circ}$
\[
P_{x}(\calF_{\lambda},t)\in E[t]\subset E_{\lambda}[t]
\]
and this polynomial is independent of $\lambda$. 
\end{defn}

\begin{defn}
\label{Definition:complete_set_of_companions}Let $X/\Fq$ be a smooth variety and let $\lambda$ be a rational prime. Let $(\calE_{v})_{v\in\Lambda}$ be a collection of $\Qlambar$ coefficient objects on $X$, indexed by a finite set $\Lambda$. We say they form a \emph{complete set of $\lambda$-adic companions} if 
\begin{itemize}
\item for every $v\in\Lambda$ and for every $\iota\in\text{Aut}_{\QQ}(\Qlambar)$, there exists $v'\in\Lambda$ such that $\mathcal{E}_{v}$ and $\mathcal{E}_{v'}$ are $\iota$-companions, and
\item For every $v,v'\in\Lambda$, $\calE_v$ and $\calE_{v'}$ are $\iota$-companions for some $\iota\in\text{Aut}_{\QQ}(\Qlambar)$.
\end{itemize}
\end{defn}

The following definition of a complete $E$-compatible system involves \emph{all possible} $\iota$-companions.
\begin{defn}
Let $X/\Fq$ be a smooth variety and let $E$ be a number field. A \emph{complete $E$-compatible system} $(\calF_{\lambda})$ is an $E$-compatible system together with, for each prime $\lambda$ of $E$ over $p$, an object 
\[
\calF_{\lambda}\in\fisocd{X}_{E_{\lambda}}
\]
such that the following two conditions hold.
\begin{enumerate}
\item For every place $\lambda$ of $E$ and every $x\in X^{\circ}$, the polynomial $P_{x}(\calF_{\lambda},t)\in E[t]\subset E_{\lambda}[t]$
is independent of $\lambda$.
\item For a prime $r$ of $\mathbb{Q}$, let $\Lambda$ denote the set of primes of $E$ above $r$. Then $(\calF_{\lambda})_{\lambda\in\Lambda}$ form a complete set of $r$-adic companions for every $r$.
\end{enumerate}
\end{defn}
In particular, let $(\calF_{\lambda})$ be a complete $E$-compatible system and let $L$ be a lisse $\Qlbar$-adic sheaf such that $L$ is isomorphic an object of $(\calF_{\lambda})$. Let $\iota\colon \Qlbar\rightarrow \bar{\QQ}_{\lambda'}$. Then the $\iota$-companion to $L$ exists and is isomorphic to an object in my list $(\calF_{\lambda})$. The most basic example of an $E$-compatible system is the following.

\begin{example}\label{Example:hilbert_complete_companions}
Let $E/\QQ$ be a totally real number field of degree $g$ and let $\calM$ be a Hilbert modular variety parametrizing principally polarized abelian $g$-folds with multiplication by a given order $\mathcal{O}\subset E$ and some level structure. For most primes $\mathfrak{p}$
of $E$, $\mathcal{M}$ has a smooth integral canonical model $\tilde{\calM}$ over $\mathcal{O}_{\mathfrak{p}}$; moreover there is a universal abelian scheme $\tilde{\mathcal{A}}\rightarrow\tilde{\calM}$. Let $\pi\colon A\rightarrow M$ denote the special fiber of such a smooth canonical model together with the induced abelian scheme. Then $E\hookrightarrow \End_{M}(A)\otimes\QQ$ and in particular
the local system $R^{1}\pi_{*}(\Ql)$ admits an action
by
\[
E\otimes\Ql\cong{\displaystyle \prod_{v|l}E_{v}}
\]
Here $v$ ranges over the primes of $E$ over $l$. Let $e_{v}$ denote the idempotent projecting $E\otimes\Ql$ onto $E_{v}$. Then $L_{v}:=e_{v}R^{1}\pi_{*}(\Ql)$ is a rank 2 lisse $\mathbb{Q}_{l}$-sheaf with an action of $E_{v}$, in other words
a rank 2 lisse $E_{v}$-sheaf. It follows from the techniques of \cite[11.9,11.10]{shimura1967} that the $L_{v}$ are all companions. In fact, $(L_{v})_{v|l}$ is a complete set of $l$-adic companions. If $[E:\mathbb{Q}]>1$ and $l$ splits non-trivially in $E$, then these will in general be mutually non-isomorphic lisse $l$-adic sheaves. By ranging over all primes of $\mathbb{Q}$ (using relative crystalline cohomology at $p$) we obtain a complete $E$-compatible system. 
\end{example}

We recall a conjecture of Deligne from Weil II \cite[Conjecture 1.2.10]{deligne1980conjecture}. 
\begin{conjecture}
\label{Conjecture:Deligne}Let $X/\FF_{p^d}$ be a normal variety with a geometric point $\overline{x}\rightarrow X$. Let $l\neq p$ be a prime. Let $\mathcal{L}$ be an absolutely irreducible $l$-adic local system with finite determinant on $X$. The choice
of $\overline{x}$ allows us to think of this as a representation $\rho_{l}\colon\pi_{1}(X,\overline{x})\rightarrow GL(n,\Qlbar)$.
Then

\begin{enumerate}
\item $\rho_{l}$ is pure of weight 0.
\item There exists a number field $E$ such that for all closed points $x$ of $X$, the polynomial $P_{x}(\mathcal{L},t)$ has all of its coefficients in $E$. In particular, the eigenvalues of $\rho_{l}(F_{x})$ are all algebraic numbers. 
\item For each place $\lambda\nmid p$, the roots $\alpha$ of $P_{x}(\mathcal{L},t)$ are $\lambda$-adic units in $\overline{E}_{\lambda}$.
\item For each $\lambda|p$, the $\lambda$-adic valuations of the roots $\alpha$ satisfy
\[
|\frac{v(\alpha)}{v(Nx)}|\leq\frac{n}{2}
\]
where $Nx$ is the size of the residue field of $x$.
\item After possibly replacing $E$ by a finite extension, for each $\lambda\nmid p$ there exists a $\lambda$-adic local system $\rho_{\lambda}:\pi_{1}(X,\overline{x})\rightarrow GL(n,E_{\lambda})$ that is compatible with $\rho_{l}$.
\item After possibly replacing $E$ by a finite extension, for each $\lambda|p$, there exists a crystalline companion to $\rho_{l}$.
\end{enumerate}
\end{conjecture}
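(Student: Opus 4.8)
The plan is to reduce the whole statement to the case where $X$ is a smooth curve, invoke the Langlands correspondence for $GL_n$ over a function field there, and then bootstrap back to higher-dimensional $X$. For the reduction, observe that parts (1)--(4) concern only the characteristic polynomials $P_x(\mathcal{L},t)$ and the weights of $\mathcal{L}$, all of which can be detected on closed points. Given $x\in X^{\circ}$, a Bertini-type argument produces a smooth connected curve $C\subset X$ through $x$ with $\pi_1(\overline{C})\twoheadrightarrow\pi_1(\overline{X})$, so that $\mathcal{L}|_C$ is again absolutely irreducible with finite determinant; since every closed point of $X$ lies on such a curve, statements (1)--(4) for $X$ follow from (1)--(4) for all such $C$.

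On a smooth curve $C/\mathbb{F}_q$, L.~Lafforgue's proof of the Langlands correspondence attaches to the absolutely irreducible $\mathcal{L}$ a cuspidal automorphic representation; unwinding its properties yields (1) (purity of weight $0$), (2) (algebraicity of the Frobenius eigenvalues and a common number field $E$ of definition), (3) (the eigenvalues are Weil numbers, hence $\lambda$-adic units for $\lambda\nmid p$), the archimedean estimate behind (4), and (5) (a full compatible system $(\rho_\lambda)_{\lambda\nmid p}$). For (6) one invokes instead Abe's $p$-adic Langlands correspondence for $GL_n$ over function fields, which produces the overconvergent $F$-isocrystal crystalline companion to $\mathcal{L}$ on $C$.

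To finish in higher dimension, parts (1)--(4) follow from the reduction in the first paragraph, while constructing the companions on all of $X$ (rather than merely on curves) uses Drinfeld's gluing technique: Wiesend's principle that a lisse sheaf is determined by its restrictions to curves, together with Deligne's finiteness theorem for local systems of bounded rank and ramification, lets the compatible family over curves in $X$ descend to a genuine lisse $\lambda$-adic sheaf $\rho_\lambda$ on $X$ for $\lambda\nmid p$, giving (5); the analogous statement (6) for overconvergent $F$-isocrystals is the theorem of Abe--Esnault and Kedlaya, proved by a parallel curve-gluing argument that relies on Kedlaya's semistable reduction theorem for $F$-isocrystals. The crux throughout is the curve case: the input from the Langlands correspondence (Lafforgue $\ell$-adically, Abe crystalline) is the genuinely deep ingredient and is what promotes the statement beyond an elementary consequence of Weil~II, and on the crystalline side controlling the ramification and overconvergence of the companion isocrystal in the descent step is the remaining serious difficulty.
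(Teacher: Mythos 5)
The statement you are proving is not a theorem of the paper at all: it is Conjecture \ref{Conjecture:Deligne}, Deligne's conjecture from Weil II, which the paper records precisely because it is open. The paper only cites partial results: the full conjecture on curves (Theorem \ref{Theorem:Abe_correspondence}, due to Lafforgue and Abe), parts (1), (2), (3), (5) in higher dimension (Theorem \ref{Theorem:Deligne_Drinfeld_l-adic}, Deligne and Drinfeld), and the existence of all $l$-adic companions for $l\neq p$ (Abe--Esnault, Kedlaya). So there is no ``paper proof'' to compare against, and any complete proof proposal must contain a gap somewhere.

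The concrete gap in your argument is part (6) for $\dim X>1$. You assert that the crystalline companion on all of $X$ is ``the theorem of Abe--Esnault and Kedlaya, proved by a parallel curve-gluing argument.'' This misstates what those theorems do: they take a coefficient object (in particular a crystalline one) and produce its \emph{$l$-adic} companions for $l\neq p$; they do not produce a $p$-adic (overconvergent $F$-isocrystal) companion of a given $l$-adic sheaf. The paper is explicit that, given $\mathcal{E}_{\lambda}$ and a non-continuous isomorphism $\iota\colon\overline{\mathbb{Q}}_{\lambda}\rightarrow\overline{\mathbb{Q}}_{p}$, it is completely unknown how to associate a crystalline $\iota$-companion when the base has dimension greater than one. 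The reason your proposed descent fails is structural: Drinfeld's gluing (via Wiesend's theorem and Deligne's finiteness) works because a lisse $\lambda$-adic sheaf is a continuous representation of $\pi_{1}(X)$, so compatible data on curves can be glued group-theoretically; an overconvergent $F$-isocrystal is not a $\pi_{1}$-representation, and no analogous gluing mechanism from curves is available, which is exactly why the paper takes the existence of a complete set of $p$-adic companions as a \emph{hypothesis} (Setup \ref{Setup:running_assumption}) rather than a known input. A secondary, fixable point: in your reduction of (1)--(4) to curves you need the restriction $\mathcal{L}|_{C}$ to stay absolutely irreducible, which requires a Lefschetz-type statement for monodromy (as in the references the paper cites for Theorem \ref{Theorem:rank_2_slope_bounds}), not merely a Bertini argument through a chosen closed point; for quasi-projective or merely normal $X$ this uses Drinfeld's and Wiesend's space-filling-curve techniques rather than surjectivity of $\pi_{1}$ of a hyperplane section. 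With those caveats, your outline for (1)--(5) does track the known results, but (6) in higher dimension remains the open heart of the conjecture and cannot be discharged by the references you name.
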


The following conjecture may be seen as a refinement to (2), (5), and (6) of Conjecture \ref{Conjecture:Deligne}.
\begin{conjecture}
\label{Conjecture:Companions}(Companions) Let $X/\Fq$ be a smooth variety. Let $\calF$ be an irreducible coefficient object on $X$ with algebraic determinant. Then there exists a number field $E$ such that $\calF$ fits into a \textbf{complete}
$E$-compatible system. 
\end{conjecture}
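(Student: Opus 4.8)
The plan is to note that, in the form stated, Conjecture \ref{Conjecture:Companions} has since become a theorem, assembled from the Langlands correspondence over function fields (which produces companions at a single new place when $X$ is a curve) together with a reduction from arbitrary smooth varieties to smooth curves; I describe how the assembly goes. First I would reduce to the case $X = C$ a smooth affine curve. Twisting $\mathcal{F}$ by a rank $1$ algebraic coefficient object --- whose companions are elementary, being essentially the case $n=1$ of class field theory --- one reduces, after a harmless finite extension of the coefficient field, to the case of (geometrically) finite determinant treated directly by Langlands. In the \'etale case, L. Lafforgue's proof of Langlands for $GL_n$ over function fields attaches to an irreducible lisse $\overline{\mathbb{Q}}_l$-sheaf a cuspidal automorphic representation with algebraic Hecke eigenvalues independent of $l$; specializing at each rational prime $r\neq p$ yields the $r$-adic companions, and Abe's $p$-adic Langlands correspondence for overconvergent $F$-isocrystals yields the crystalline companion at $p$ (and, conversely, the \'etale companions of a crystalline $\mathcal{F}$). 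Thus on curves $\mathcal{F}$ fits into a complete $E$-compatible system, with $E$ a number field by the finiteness statement invoked below.

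Next I would bootstrap from curves to a general smooth $X/\mathbb{F}_q$. The ingredients are: (i) Deligne's finiteness theorem, that there are only finitely many isomorphism classes of irreducible coefficient objects on $X$ with bounded rank and ramification (the circle of ideas of Remark \ref{Remark:geometric_origin}); (ii) Theorem \ref{Theorem:Brauer_Nesbitt_coefficients}, that a semisimple coefficient object is determined by its characteristic polynomials $P_x$ at closed points --- and the refinement that it is determined by its restrictions to a suitable family of smooth curves through the closed points of $X$; and (iii) Bertini / Hilbert-irreducibility arguments over finite fields that, given a closed point and two curves through it, produce a single smooth curve on which the restrictions can be compared. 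Fixing $\iota$, one uses the curve case to build $\iota$-companions $\mathcal{F}_C'$ on a cofinal system of curves $C\subset X$, checks that they agree on overlaps, and glues them to a global companion $\mathcal{F}'$ on $X$ whose rank and ramification are controlled well enough that (i) applies. For the \'etale case this is Drinfeld's theorem; for the crystalline case it is the work of Abe--Esnault and of Kedlaya, where the new difficulties are the absence of an a priori Chebotarev statement for overconvergent $F$-isocrystals (for which Theorem \ref{Theorem:Brauer_Nesbitt_coefficients}, due to Tsuzuki, substitutes) and the need to control overconvergence and local monodromy along curves, which rests on Kedlaya's semistable reduction theorems.

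It then remains to assemble the pieces. For each rational prime $r$ and each field isomorphism $\iota\colon\overline{\mathbb{Q}}_\lambda\to\overline{\mathbb{Q}}_r$ (ranging over the places $\lambda$ of the coefficient field, which may grow) the previous two steps produce an $\iota$-companion of $\mathcal{F}$; finiteness (i) shows these lie in finitely many isomorphism classes, so after one finite extension $E/\mathbb{Q}$ they all descend to $E_\lambda$-coefficient objects, and the resulting family $(\mathcal{E}_\lambda)$ is closed under the $\iota$-companion operation at every prime, i.e. it is a complete $E$-compatible system, refining parts (2), (5) and (6) of Conjecture \ref{Conjecture:Deligne}. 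I expect the genuine obstacles to be the two deep inputs on which everything rests: Abe's crystalline Langlands correspondence on curves, and the curves-to-higher-dimension descent in the crystalline setting (Abe--Esnault, Kedlaya); both are considerably harder than their long-established \'etale counterparts, and neither is formal.
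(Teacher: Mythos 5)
This statement is a \emph{conjecture} in the paper, not a theorem: the authors state it precisely because it is open, and the whole point of the paper is to take the existence of a complete set of $p$-adic companions as a \emph{hypothesis} (see Setup \ref{Setup:running_assumption} and Theorem \ref{Theorem:main_theorem}). So there is no proof in the paper to compare against, and your proposal cannot be a reconstruction of one.

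More importantly, your argument has a genuine gap at exactly the point the paper flags. The curve case is indeed a theorem (Lafforgue plus Abe, Theorem \ref{Theorem:Abe_correspondence}), and the curves-to-higher-dimensions gluing you describe in step (ii)--(iii) does exist --- but only with an $l$-adic \emph{target}, $l\neq p$. Drinfeld's method (and its extensions by Abe--Esnault and Kedlaya, cited around Theorem \ref{Theorem:Deligne_Drinfeld_l-adic}) builds the companion as a continuous representation $\pi_1(X)\rightarrow GL_n(\overline{\mathbb{Q}}_{l'})$ by assembling finite-level data from restrictions to curves; this uses essentially that a lisse sheaf is a representation of the fundamental group. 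There is no analogous way to glue overconvergent $F$-isocrystals prescribed on a family of curves into an object of $\textbf{F-Isoc}^{\dagger}(X)_{\overline{\mathbb{Q}}_{p}}$, and the paper states explicitly that, given $\mathcal{E}_{\lambda}$ and a (non-continuous) isomorphism $\iota\colon\overline{\mathbb{Q}}_{\lambda}\rightarrow\overline{\mathbb{Q}}_{p}$, it is completely unknown how to associate a crystalline $\iota$-companion when $\dim X>1$. Your assembly step silently treats ``Abe--Esnault, Kedlaya'' as supplying this crystalline descent from curves, but those results go in the other direction (producing $l$-adic companions, possibly starting from a crystalline object); the $p$-adic half of part (6) of Conjecture \ref{Conjecture:Deligne} in dimension $\geq 2$ is precisely what is missing, and with it the claim that $\mathcal{F}$ fits into a \textbf{complete} $E$-compatible system. (The auxiliary reductions you mention --- twisting by rank 1 objects to reduce to finite determinant, Deligne's finiteness theorem, and the Brauer--Nesbitt/Tsuzuki uniqueness of Theorem \ref{Theorem:Brauer_Nesbitt_coefficients} --- are fine, but they do not address this obstruction.)
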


The companions conjecture is surprising for the following reason: an $l$-adic local system is simply a continuous homomorphism from
$\pi_{1}(X)$ to $GL_{n}(\Qlbar)$, and the topologies on $\Ql$ and $\QQ_{l'}$ are completely different. Deligne likely made his conjecture out of the hope that such local systems were \emph{of geometric origin}. We summarize what is known
about Conjecture \ref{Conjecture:Deligne} and Conjecture \ref{Conjecture:Companions}.

By work of Deligne, Drinfeld, and Lafforgue, if $X$ is a curve all such local systems are of geometric origin (in the sense of subquotients). Moreover, in this case Chin has proved Part 5 of the conjecture \cite[Theorem 4.1]{chin2004independence}.
Abe has recently constructed a sufficiently robust theory of $p$-adic cohomology to prove a $p$-adic Langlands correspondence and hence answer affirmatively part 6 of Deligne's conjecture when $X$ is a curve \cite{abe2013langlands,abe2011langlands}.
\begin{thm}
\label{Theorem:Abe_correspondence} (Abe, Lafforgue) Let $C/\Fq$ be a smooth curve. Then both Deligne's conjecture and the companions conjecture are true for $C$.
\end{thm}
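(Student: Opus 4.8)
The plan is to cite the existing literature: this theorem is not new work of the present authors but rather a summary statement recording the cumulative achievements of Deligne, Drinfeld, Lafforgue, Chin, and Abe in the curve case. The strategy is to explain how the five relevant assertions—purity (part 1), algebraicity of Frobenius characteristic polynomials (part 2), the integrality statements (parts 3 and 4), existence of $\ell$-adic companions (part 5), and existence of crystalline companions (part 6)—each follow from known results, and then observe that Conjecture \ref{Conjecture:Companions} follows formally once one has both kinds of companions together with the compatibility and completeness bookkeeping.

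First I would reduce to the case of an absolutely irreducible coefficient object $\mathcal{F}$ with finite (or at least algebraic) determinant on a smooth curve $C/\mathbb{F}_q$; twisting by a rank $1$ object handles the passage between ``finite determinant'' and ``algebraic determinant'', and semisimple objects decompose into irreducibles, so it suffices to treat the irreducible case. In the $\ell$-adic case, the Langlands correspondence for $GL_n$ over function fields (Drinfeld for $n=2$, L. Lafforgue in general) attaches to $\mathcal{F}$ a cuspidal automorphic representation, and the Ramanujan–Petersson bounds proved along the way give purity (part 1) and hence the weight-$0$ normalization; rationality of the Hecke eigenvalues gives part 2; and the local components at the bad and good primes, together with the fact that the correspondence is realized inside the cohomology of moduli of shtukas, give the integrality assertions of parts 3 and 4. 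Chin's theorem \cite[Theorem 4.1]{chin2004independence} then produces, after a finite extension of the number field $E$, a compatible system $(\rho_\lambda)_{\lambda \nmid p}$, which is part 5. For part 6, Abe's construction of a $p$-adic Langlands correspondence and the attendant theory of overconvergent $F$-isocrystals and arithmetic $\mathcal{D}$-modules \cite{abe2013langlands,abe2011langlands} attaches to the same automorphic data a crystalline companion; Abe–Esnault and Kedlaya have also verified the expected compatibilities, so the crystalline object sits in the same compatible system.

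Finally I would assemble these pieces into the statement of Conjecture \ref{Conjecture:Companions}: for every prime number $r$, the primes $\lambda \mid r$ of (a suitable finite extension of) $E$ index objects $\mathcal{E}_\lambda$ that are pairwise $\iota$-companions as $\iota$ ranges over $\mathrm{Aut}_{\mathbb{Q}}(\overline{\mathbb{Q}}_r)$—this is precisely the content of the Langlands correspondence being insensitive to the choice of coefficient field, combined with Theorem \ref{Theorem:Brauer_Nesbitt_coefficients}, which guarantees that the isomorphism class of each semisimple companion is pinned down by its Frobenius characteristic polynomials. The main obstacle—really the only substantive point—is part 6: establishing the existence of crystalline companions on curves required Abe's development of a full six-functor formalism for $p$-adic coefficient objects and a $p$-adic product formula à la Lafforgue, which is deep; everything else is either classical Weil II technology or a formal consequence of the Langlands correspondence. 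Since the present paper takes Theorem \ref{Theorem:Abe_correspondence} as an input rather than proving it afresh, the ``proof'' here is a pointer to \cite{abe2013langlands}, \cite{abe2011langlands}, \cite{chin2004independence}, and the work of Deligne–Drinfeld–Lafforgue, with the observations above explaining why those results combine to give exactly the two displayed conjectures in the curve case.
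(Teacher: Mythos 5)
Your proposal is correct and matches the paper's treatment: the paper states this theorem as a summary of the literature, with the preceding paragraph serving as its justification — Drinfeld–Lafforgue (via the Langlands correspondence) for the $l$-adic parts of Deligne's conjecture, Chin \cite[Theorem 4.1]{chin2004independence} for part 5, and Abe's $p$-adic Langlands correspondence \cite{abe2013langlands,abe2011langlands} for part 6, assembled into Conjecture \ref{Conjecture:Companions} exactly as you describe. No further comparison is needed.
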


In higher dimension, much is known but Conjecture \ref{Conjecture:Companions} remains open in general. See \cite{esnault2012finiteness} for a precise chronology of the following theorem, due to Deligne and Drinfeld. 
\begin{thm}
\label{Theorem:Deligne_Drinfeld_l-adic}\cite{deligne2012finitude,drinfeld2012conjecture}
Let $X/\Fq$ be a smooth variety. Let $l\neq p$ be a prime. Let $\mathcal{L}$ be an absolutely irreducible $l$-adic local system with finite determinant on $X$. Then (1), (2), (3), and (5) of Conjecture \ref{Conjecture:Deligne} are true.
\end{thm}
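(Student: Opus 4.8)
The plan is to deduce this from the curve case, Theorem \ref{Theorem:Abe_correspondence}, by a Lefschetz/Bertini argument. The key geometric input I would establish first is the following: after possibly enlarging $\mathbb{F}_{q}$ (harmless, since none of (1)--(3), (5) is affected by a finite extension of the base field), for every closed point $x\in X^{\circ}$ there is a smooth, geometrically connected curve $x\in C\hookrightarrow X$ --- for instance a complete-intersection curve section of suitably high degree in a projective embedding --- such that $\mathcal{L}|_{C}$ is still absolutely irreducible with finite determinant, and whose conductor is controlled in terms of the rank of $\mathcal{L}$, the degree of $C$, and the ramification of $\mathcal{L}$. (The irreducibility of the restriction is a genericity condition on the section, which one combines with a Poonen-type Bertini theorem imposing the condition $x\in C$.) When $\dim X\ge 2$ I would also single out one such curve $C_{0}$ with $\pi_{1}(C_{0})\twoheadrightarrow\pi_{1}(X)$, using the Lefschetz theorem for fundamental groups; this is what is needed for (5).

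With this in hand, parts (1), (2) and (3) are essentially formal, because they involve only the Frobenius characteristic polynomials $P_{x}(\mathcal{L},t)$, and $P_{x}(\mathcal{L},t)=P_{x}(\mathcal{L}|_{C},t)$ for any curve $C\ni x$ as above. Applying Theorem \ref{Theorem:Abe_correspondence} to the absolutely irreducible sheaf $\mathcal{L}|_{C}$ gives at once that $\rho_{l}(F_{x})$ is pure of weight $0$ (hence (1)), that the roots of $P_{x}(\mathcal{L},t)$ are $\lambda$-adic units for every $\lambda\nmid p$ (hence (3)), and that all coefficients of $P_{x}(\mathcal{L},t)$ are algebraic with $[\mathbb{Q}(P_{x}(\mathcal{L},t)):\mathbb{Q}]$ bounded independently of $x$ (the field of Hecke eigenvalues of the cuspidal automorphic representation attached to $\mathcal{L}|_C$ by Lafforgue has degree bounded in terms of rank and conductor). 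To upgrade this to a single number field $E$ valid for all $x$ simultaneously --- the precise content of (2) --- I would invoke the Deligne finiteness theorem for irreducible lisse $\overline{\mathbb{Q}}_{l}$-sheaves of bounded rank and ramification \cite[Theorem 5.1]{esnault2012finiteness}, which forces only finitely many of the fields $\mathbb{Q}(P_{x}(\mathcal{L},t))$ to occur, so that their compositum $E$ is a number field.

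The real work is part (5): for each $\lambda\nmid p$ and each field isomorphism $\iota$, produce an $\iota$-companion of $\mathcal{L}$ that is a genuine lisse $\overline{\mathbb{Q}}_{\lambda}$-sheaf on all of $X$. On each curve section $C$ as above, Theorem \ref{Theorem:Abe_correspondence} supplies an $\iota$-companion $\mathcal{L}'_{C}$ of $\mathcal{L}|_{C}$, and Theorem \ref{Theorem:Brauer_Nesbitt_coefficients} forces these to be mutually compatible, since on any overlap their Frobenius characteristic polynomials agree (being the $\iota$-transforms of those of $\mathcal{L}$). The problem is to glue this compatible family over the algebraic family of all curve sections into one lisse sheaf on $X$; equivalently, to show that the companion $\mathcal{L}'_{C_{0}}$ on the Lefschetz curve $C_{0}$ descends along $\pi_{1}(C_{0})\twoheadrightarrow\pi_{1}(X)$. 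This is exactly Drinfeld's theorem \cite{drinfeld2012conjecture}: one studies how lisse sheaves propagate in the universal family of curve sections (an open subscheme of the relevant Hilbert scheme) and, crucially using \cite[Theorem 5.1]{esnault2012finiteness} to reduce to finitely many candidates together with a connectedness/specialization argument in that family, shows that the kernel of $\pi_{1}(C_{0})\to\pi_{1}(X)$ acts trivially. I expect this gluing step to be the main obstacle --- it is the entire content of \cite{drinfeld2012conjecture} --- whereas (1), (2), (3) are comparatively routine once the Bertini input and the curve case are available. Note finally that (4) and (6), which require genuinely $p$-adic (crystalline) information, are deliberately left out of the statement.
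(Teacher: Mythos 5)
The first thing to say is that the paper contains no proof of this statement: Theorem \ref{Theorem:Deligne_Drinfeld_l-adic} is quoted as a theorem of Deligne \cite{deligne2012finitude} and Drinfeld \cite{drinfeld2012conjecture}, with \cite{esnault2012finiteness} cited for the chronology, so there is no in-text argument to measure your proposal against. Read as an outline of those papers, your sketch identifies the correct architecture: reduce to curves, apply Theorem \ref{Theorem:Abe_correspondence} (Lafforgue) on the curve for (1) and (3), establish the trace field for (2), and descend companions along $\pi_{1}(C_{0})\twoheadrightarrow\pi_{1}(X)$ for (5). But as a proof it has real gaps. For (1) and (3), the phrase ``the irreducibility of the restriction is a genericity condition on the section'' hides the essential input: absolute irreducibility of $\mathcal{L}|_{C}$ is guaranteed by a Lefschetz-type statement for fundamental groups or by Drinfeld's space-filling-curve result (cf. \cite[Proposition 2.17]{drinfeld2012conjecture}, \cite[Theorem 0.1]{abe2016lefschetz}, which is exactly how the present paper reduces Theorem \ref{Theorem:rank_2_slope_bounds} to curves), not by Bertini smoothness; Poonen's theorem \cite[Theorem 1.3]{poonen2004bertini} only gives smooth curves through prescribed points, and for quasi-projective $X$ one must additionally control ramification at the boundary.

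The more serious gap is in (2). Bounding $[\mathbb{Q}(P_{x}(\mathcal{L},t)):\mathbb{Q}]$ uniformly in $x$ (which is plausible via finite-dimensionality of spaces of cusp forms of bounded level) does not yield a single number field $E$: there are infinitely many number fields of any fixed degree, so the compositum could be infinite. Your appeal to \cite[Theorem 5.1]{esnault2012finiteness} does not close this hole --- that theorem bounds the number of irreducible lisse sheaves of given rank and bounded ramification (up to twist); it says nothing directly about the collection of trace fields of a \emph{single} sheaf at varying closed points, so ``only finitely many of the fields $\mathbb{Q}(P_{x}(\mathcal{L},t))$ occur'' is a non sequitur as written. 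Moreover, in the literature that finiteness theorem comes after, and relies on, the results you are trying to prove, so invoking it here risks circularity. Deligne's actual proof of (2) in \cite{deligne2012finitude} is different and is the substantive content of that part: one fixes a good curve to pin down a candidate field and then, using curves through a given point $x$ meeting sufficiently many prescribed points together with automorphic finiteness on curves, forces the traces at $x$ into that same field. Finally, for (5) you correctly isolate the descent/gluing over the family of curve sections as the crux, but you then defer it wholesale to \cite{drinfeld2012conjecture}; since that step is essentially the whole theorem, the proposal is an accurate roadmap of the Deligne--Drinfeld proof rather than a proof.
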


\begin{thm}
(Deligne, Drinfeld, Abe-Esnault, Kedlaya) Let $X/\Fq$ be a smooth variety and let $\calF$ be a coefficient object that is absolutely irreducible and has finite determinant. Then for any $l\neq p$, all $l$-adic companions exist.
\end{thm}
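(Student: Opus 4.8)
The plan is to reduce, in both the étale and crystalline cases, to the case of curves --- where Theorem~\ref{Theorem:Abe_correspondence} supplies all companions --- and then to ``spread out'' the curve-wise companions to $X$ via Drinfeld's descent-along-curves machinery. Throughout, one uses that a semisimple coefficient object is pinned down by its Frobenius characteristic polynomials (Theorem~\ref{Theorem:Brauer_Nesbitt_coefficients}), so that companions, once they exist, are automatically unique and automatically semisimple; the entire content is their existence, and ``all $l$-adic companions exist'' means precisely that a complete set of $l$-adic companions exists in the sense of Definition~\ref{Definition:complete_set_of_companions}. Since $\mathcal F$ is absolutely irreducible with finite determinant, it is pure of weight $0$ with bounded ramification, so Deligne's finiteness theorem \cite{esnault2012finiteness} shows that there are only finitely many irreducible lisse $\overline{\mathbb Q}_l$-sheaves that could be companions; hence the compositum $E$ of their trace fields is a number field, and it is enough to produce, for each embedding $E\hookrightarrow\overline{\mathbb Q}_l$, a lisse $\overline{\mathbb Q}_l$-sheaf realizing the corresponding $\iota$-twisted Frobenius polynomials.

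\emph{The étale case.} Suppose $\mathcal F$ is a lisse $\overline{\mathbb Q}_{l'}$-sheaf with $l'\neq p$. Then parts (2) and (5) of Conjecture~\ref{Conjecture:Deligne}, available by Theorem~\ref{Theorem:Deligne_Drinfeld_l-adic}, applied with (a finite enlargement of) the number field above, directly produce a compatible $E_\lambda$-local system on $X$ for every prime $\lambda\nmid p$ of $E$; specializing to the primes $\lambda\mid l$ and composing with the embeddings $E_\lambda\hookrightarrow\overline{\mathbb Q}_l$ realizes every $\iota$-companion, i.e.\ the complete set of $l$-adic companions. It is worth recording the mechanism behind Theorem~\ref{Theorem:Deligne_Drinfeld_l-adic}: Lafforgue's correspondence (the curve half of Theorem~\ref{Theorem:Abe_correspondence}) furnishes the $\iota$-companion of $\mathcal F|_C$ for every smooth curve $C$ mapping to $X$, with absolute irreducibility preserved; uniqueness forces these to be compatible under pullback along $C'\to C$; and Drinfeld's ``recollement'' theorem \cite{drinfeld2012conjecture}, \cite{deligne2012finitude} --- reconstructing a lisse sheaf on a smooth variety from a bounded, pullback-compatible family of lisse sheaves on all curves in it --- assembles them into a sheaf on $X$, the boundedness hypothesis being met because companions preserve local monodromy.

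\emph{The crystalline case.} Suppose $\mathcal F=\mathcal E$ is an object of $\textbf{F-Isoc}^{\dagger}(X)_{\overline{\mathbb Q}_p}$. For each smooth curve $C$ mapping to $X$, Abe's $p$-adic Langlands correspondence (Theorem~\ref{Theorem:Abe_correspondence}) produces the $\iota$-companion $\mathcal L_C$ of $\mathcal E|_C$ as a lisse $\overline{\mathbb Q}_l$-sheaf on $C$, again irreducible, and uniqueness makes the family $\{\mathcal L_C\}$ compatible under pullback, with ramification bounded uniformly in $C$ by that of $\mathcal E$. One then feeds this family into Drinfeld's recollement to obtain a lisse $\overline{\mathbb Q}_l$-sheaf $\mathcal L$ on $X$, and checks that it is an $\iota$-companion of $\mathcal E$ by comparing Frobenius polynomials at closed points, which may be detected on curves. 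Making this rigorous is the work of Abe--Esnault, with later refinements of Kedlaya: it requires a Lefschetz-type theorem identifying, for suitably chosen $C\subset X$, enough of $\pi_1(C)$ inside $\pi_1(X)$ for the recollement data to genuinely descend, together with care about the passage between convergent and overconvergent $F$-isocrystals upon restriction to curves.

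\emph{The main obstacle.} The nonformal step is exactly the recollement/descent from curves to $X$: Drinfeld's theorem in the étale case, its crystalline analogue of Abe--Esnault (and Kedlaya) in the other. Its proof demands (i) a uniform bound on the ramification of the companions attached to the curves $C\subset X$, so that Deligne's finiteness theorem renders the relevant family of curve-wise local systems algebraic and hence amenable to a constructibility/moduli argument; and (ii) a Lefschetz-pencil argument promoting compatibility over all curves to an honest sheaf on $X$ --- which in the $p$-adic setting rests on the Lefschetz theorem for overconvergent $F$-isocrystals with Frobenius structure. Everything else --- uniqueness, preservation of absolute irreducibility, and the passage from ``one companion per place $\lambda$'' to ``a complete set'' --- is bookkeeping organized around Theorems~\ref{Theorem:Abe_correspondence} and \ref{Theorem:Deligne_Drinfeld_l-adic}.
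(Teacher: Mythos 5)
Your proposal is correct and follows essentially the same route as the paper: the \'etale case is exactly Theorem \ref{Theorem:Deligne_Drinfeld_l-adic} (parts (2) and (5) of Conjecture \ref{Conjecture:Deligne}), and the crystalline case is the Abe--Esnault/Kedlaya result producing $l$-adic companions of an overconvergent $F$-isocrystal, which is precisely what the paper cites (\cite[Theorem 4.2]{abe2016lefschetz} or \cite[Theorem 0.4.1]{kedlayacompanions}). Your additional sketch of the curve-by-curve construction and Drinfeld's recollement is accurate background on how those cited theorems are proved, not a different argument.
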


\begin{proof}
This follows from Theorem \ref{Theorem:Deligne_Drinfeld_l-adic} together with either \cite[Theorem 4.2]{abe2016lefschetz} or \cite[Theorem 0.4.1]{kedlayacompanions}.
\end{proof}
\begin{rem}
The ``$p$-companions'' part of the conjecture is \emph{not known} if $\dim C>1$. Given an irreducible lisse $\Qlbar$ sheaf $L$ with trivial determinant and a (necessarily non-continuous) isomorphism $\iota\colon \Qlbar\rightarrow \Qpbar$,
it is \textbf{completely unknown} how to associate a crystalline $\iota$-companion to $L$.

We remark that Part (4) of the Conjecture \ref{Conjecture:Deligne} is not tight even for $n=2$.
\end{rem}

\begin{thm}(Abe-Lafforgue)\label{Theorem:rank_2_slope_bounds} Let $X/\Fq$ be a smooth variety and let $\mathcal{E}$ be an absolutely irreducible rank 2 coefficient object on $C$ with finite determinant. Then for all $x\in C^{\circ}$, the eigenvalues $\alpha$ of $F_{x}$ satisfy
\[
|\frac{v(\alpha)}{v(Nx)}|\leq\frac{1}{2}
\]
for any $p$-adic valuation $v$.
\end{thm}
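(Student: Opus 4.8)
The plan is to reduce to the case of curves, where the statement becomes a consequence of the $p$-adic Langlands correspondence of Abe together with the compatibility of automorphic forms with local Langlands at ramified places. First I would reduce to the situation of a single valuation $v$ lying over $p$ and a single closed point $x \in X^\circ$. Pick a smooth curve $C \hookrightarrow X$ passing through $x$ (using a Bertini-type argument; since we only care about one closed point this is harmless, though one must make sure the restriction stays absolutely irreducible — if it does not one can instead work directly with the Weil group of the residue field and invoke the existence of companions already established). By Theorem \ref{Theorem:Deligne_Drinfeld_l-adic} the $\ell$-adic companions of $\mathcal{E}|_C$ exist for $\ell \neq p$; by Theorem \ref{Theorem:Abe_correspondence} applied to $C$, a crystalline companion exists as well, so after possibly enlarging the coefficient field there is an $\ell$-adic local system $\mathcal{L}$ on $C$ companion to $\mathcal{E}|_C$. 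One then translates the slope bound at $x$ into a statement about the Newton polygon of $P_x(\mathcal{E},t)$, which is the same polynomial (up to the fixed isomorphism $\iota$) as $P_x(\mathcal{L},t)$.

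The core input is then the structure of rank $2$ cuspidal automorphic representations of $GL_2$ over the function field $\mathbb{F}_q(C)$: via Lafforgue's correspondence, $\mathcal{L}$ corresponds to such a $\pi = \bigotimes \pi_w$, and the slopes of Frobenius eigenvalues at $x$ are governed by the local component $\pi_x$ together with, at bad places, the Satake/Langlands parameters. Concretely, the eigenvalues $\alpha$ of $F_x$ are $q_x$-Weil numbers of weight $\leq 1$ (after the appropriate twist by the finite-order determinant, weight $0$), and the $p$-adic valuation of such a number, normalized by $v(Nx)$, is constrained because the local factor $L(\pi_x, s)$ has degree $\leq 2$ and the two roots multiply to a unit times $\det$, which is a finite-order character and hence a $p$-adic unit. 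The key point is: if one eigenvalue has slope $s$, the other has slope $-s$ (their product is a $p$-adic unit), and the weight-$\leq 1$ purity bound forces $|s| \leq 1/2$. This last step is exactly where Abe–Lafforgue is used: purity of $\mathcal{E}$ (equivalently of $\mathcal{L}$) is part of the Langlands package, and combined with the determinant normalization it pins down the archimedean-style bound even at ramified points, improving the crude bound $n/2 = 1$ from part (4) of Conjecture \ref{Conjecture:Deligne} to $1/2$.

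The main obstacle I anticipate is the behaviour at points of bad reduction of $\mathcal{E}$ (equivalently, places where $\pi$ is ramified): there the naive Newton-polygon argument for unramified Frobenius does not directly apply, and one must invoke the explicit description of the local Langlands correspondence for $GL_2$ to see that a ramified $\pi_x$ is either a twist of Steinberg or (super)cuspidal, in each case producing Frobenius eigenvalues whose slopes are still symmetric about $0$ and bounded by $1/2$ by purity. A secondary technical point is ensuring that restricting to a curve through $x$ preserves absolute irreducibility, or otherwise arguing that the slope bound is insensitive to semisimplification — since the characteristic polynomial at $x$ only sees the semisimplification, and a non-trivial decomposition would give rank-$1$ pieces for which the bound $|v(\alpha)/v(Nx)| \leq 1/2$ is immediate from purity of weight $\leq 1$, this case is actually easier. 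Assembling these, one concludes the bound for $\mathcal{E}$ itself by the companion equality of characteristic polynomials.
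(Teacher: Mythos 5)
There is a genuine gap at the crux of your argument. You reduce (as the paper does) to a curve and, via Theorem \ref{Theorem:Abe_correspondence}, to an $l$-adic companion $\mathcal{L}$; but you then try to deduce the bound $|v(\alpha)/v(Nx)|\leq\frac{1}{2}$ from ``purity of weight $\leq 1$ plus the determinant being a $p$-adic unit.'' Purity is a statement about \emph{archimedean} absolute values and places no bound at all on $p$-adic valuations: a weight-$0$ Weil number that is a unit at every place away from $p$ can have arbitrarily large $p$-adic valuation (take $\alpha=u/\bar{u}$ with $u$ a generator of $\mathfrak{p}^{h}$ in an imaginary quadratic field in which $p$ splits; then $|\alpha|_{\infty}=1$, $\alpha$ is a unit away from $p$, and $v_{\mathfrak{p}}(\alpha)=h$ is unbounded). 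The determinant condition only gives the symmetry $v(\alpha_{1})+v(\alpha_{2})=0$; it does not bound either slope individually, and indeed the extreme slopes $\pm\frac{1}{2}$ do occur. The bound $\frac{1}{2}$ is a genuinely automorphic statement: it is Lafforgue's Th\'eor\`eme VII.6.(iv) \cite{lafforgue2002chtoucas}, which is exactly what the paper invokes for the curve case, and it cannot be recovered formally from purity together with the determinant normalization (note that even the ``crude'' bound $n/2$ in part (4) of Conjecture \ref{Conjecture:Deligne} is not a formal consequence of purity). Your fallback in the reducible case suffers from the same conflation: a rank-$1$ summand of the restriction is $\zeta\cdot b^{\deg}$ with $b$ a weight-$0$ Weil number, and purity alone does not force $|v(b)/v(q)|\leq\frac{1}{2}$.

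A secondary, fixable issue is the reduction to curves. A plain Bertini argument does not guarantee that the restriction of $\mathcal{E}$ to a curve through $x$ stays (absolutely) irreducible, and your proposed workarounds do not close this. The paper handles this by citing the Lefschetz-type theorems \cite[Proposition 2.17]{drinfeld2012conjecture} and \cite[Theorem 0.1]{abe2016lefschetz}, which produce, for an irreducible coefficient object on a smooth variety, curves through a prescribed closed point on which irreducibility is preserved; with that in hand the curve case (Lafforgue for $l$-adic objects, plus Theorem \ref{Theorem:Abe_correspondence} to pass from the crystalline to the $l$-adic case) finishes the proof. So your overall skeleton (reduce to curves, use Abe to produce an $l$-adic companion, conclude by Lafforgue) matches the paper, but the final slope bound must be quoted from \cite{lafforgue2002chtoucas} rather than rederived from purity, and the passage to curves must use the Drinfeld/Abe--Esnault Lefschetz theorems.
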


\begin{proof}
Suppose $X$ is a curve. Then when $\mathcal{E}$ is an $l$-adic coefficient object, this is \cite[Theorem VII.6.(iv)]{lafforgue2002chtoucas}. For a crystalline coefficient object, we simply apply Theorem \ref{Theorem:Abe_correspondence}
to construct an $l$-adic coefficient object.

More generally, \cite[Proposition 2.17]{drinfeld2012conjecture} and \cite[Theorem 0.1]{abe2016lefschetz} function as ``Lefschetz theorems'' (see \cite{esnault2017survey}) and allow us to reduce to the case
of curves.
\end{proof}
\begin{rem}
A refined version of part (4) of Conjecture \ref{Conjecture:Deligne} has been resolved for curves by V. Lafforgue \cite[Corollaire 2.2]{lafforgue2011estimees} using automorphic forms. A $p$-adic variant (which avoids automorphic techniques) was shown directly by Drinfeld-Kedlaya \cite[Theorem 1.1.5]{drinfeld2016slopes}. They prove that for an indecomposable crystalline coefficient object, the generic consecutive slopes do not differ by more than 1, reminiscent of Griffith's transversality. Kramer-Miller has recently given another proof of the $p$-adic variant \cite{kramermiller}.
\end{rem}

Finally, we make the following simple observation: under strong assumptions on the field of traces, the existence of companions with the same residue characteristic is automatic.
\begin{cor}
\label{Corollary:all_companions_Galois_twists}Let $X/\Fq$ be a smooth variety, $E$ a number field, and $\calF$ be an irreducible $E$-algebraic coefficient object with coefficients in $\Qpbar$ (resp. $\Qlbar$). If there is only one prime in $E$ lying above $p$ (resp. $l$), then all $p$-adic (resp. $l$-adic) companions exist.
\end{cor}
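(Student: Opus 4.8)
The plan is to reduce everything to the companions already in hand and the theorems of Deligne--Drinfeld--Abe--Esnault--Kedlaya quoted above. Suppose $\mathcal{F}$ is crystalline (the $l$-adic case is identical with $p$ replaced by $l$), so $\mathcal{F}$ is an irreducible object of $\textbf{F-Isoc}^{\dagger}(X)_{\overline{\mathbb{Q}}_{p}}$ that is $E$-algebraic; by hypothesis $p$ is \emph{inert} in $E$, i.e. there is a unique prime $\lambda_{0}$ of $E$ above $p$, with completion $E_{\lambda_{0}}$. First I would descend: since $\mathcal{F}$ is $E$-algebraic and has coefficients in $\overline{\mathbb{Q}}_{p}$, one shows $\mathcal{F}$ may be taken to have coefficient field a finite extension of $\mathbb{Q}_{p}$ containing $E_{\lambda_{0}}$, and in fact — after possibly enlarging — we may assume the coefficient field is $E_{\lambda_{0}}$ itself, using that the characteristic polynomials of Frobenius all lie in $E[t]\subset E_{\lambda_{0}}[t]$ together with Theorem~\ref{Theorem:Brauer_Nesbitt_coefficients} (a semisimple coefficient object is pinned down by its Frobenius characteristic polynomials). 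So we regard $\mathcal{F}$ as an object of $\textbf{F-Isoc}^{\dagger}(X)_{E_{\lambda_{0}}}$.

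Next I would produce the companions at the \emph{other} residue characteristics. For any prime $r\neq p$ of $\mathbb{Q}$, the theorem of Deligne--Drinfeld (as upgraded by Abe--Esnault and Kedlaya, quoted just above) guarantees that all $r$-adic companions of $\mathcal{F}$ exist; after enlarging $E$ by a finite extension if necessary (which does not affect the inertness hypothesis at $p$ only if we are careful, so in fact I would phrase the statement so that $E$ is allowed to grow but $p$ stays inert — or simply note that the set of companions is intrinsic and reassemble $E$ at the end), these organize into the $r$-adic part of an $E$-compatible system. The point of the inertness hypothesis is now this: a complete set of $p$-adic companions, by Definition~\ref{Definition:complete_set_of_companions}, must contain, for every $\iota\in\text{Aut}_{\mathbb{Q}}(\overline{\mathbb{Q}}_{p})$, an object $\iota$-companion to $\mathcal{F}$. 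But each such $\iota$-companion $^{\iota}\mathcal{F}$ has Frobenius characteristic polynomials $^{\iota}P_{x}(\mathcal{F},t)$, and since $P_{x}(\mathcal{F},t)\in E[t]$ and $\iota$ fixes $\mathbb{Q}$, the polynomial $^{\iota}P_{x}(\mathcal{F},t)$ is the image of $P_{x}(\mathcal{F},t)$ under an element of $\text{Aut}_{\mathbb{Q}}(E)$ (extended to $\overline{\mathbb{Q}}$). Thus the \emph{set} of $\iota$-companions, as $\iota$ ranges over $\text{Aut}_{\mathbb{Q}}(\overline{\mathbb{Q}}_{p})$, is in bijection with the $\text{Gal}(\widetilde{E}/\mathbb{Q})$-orbit of $\mathcal{F}$ (where $\widetilde{E}$ is the Galois closure of $E$), and by Theorem~\ref{Theorem:Brauer_Nesbitt_coefficients} each of these is determined up to isomorphism by its Frobenius data.

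The key step is therefore to check that each Galois conjugate $^{\tau}\mathcal{F}$ ($\tau\in\text{Gal}(\widetilde{E}/\mathbb{Q})$) \emph{actually exists as a crystalline coefficient object}, and this is exactly where inertness does the work. Because $p$ is inert, $\text{Aut}_{\mathbb{Q}}(\overline{\mathbb{Q}}_{p})$ acts on the Frobenius characteristic polynomials through the \emph{decomposition group} at $\lambda_{0}$, which is all of $\text{Gal}(\widetilde E/\mathbb{Q})$ precisely because there is only one prime above $p$ — equivalently, every abstract field automorphism $\iota$ of $\overline{\mathbb{Q}}_{p}$ restricts, on the algebraic numbers, to something compatible with a \emph{continuous} automorphism of $\overline{\mathbb{Q}}_{p}$ modulo $\text{Gal}(\widetilde E/\mathbb{Q})$-ambiguity. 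Concretely: given $\iota$, let $\tau=\iota|_{\widetilde E}\in\text{Aut}_{\mathbb{Q}}(\widetilde E)$; since $p$ is inert, $\tau$ extends to a continuous automorphism $\widetilde{\iota}$ of the local field $E_{\lambda_{0}}$ (the local Galois group surjects onto the global one here), and then $^{\widetilde{\iota}}\mathcal{F}$ — obtained by transporting the $E_{\lambda_{0}}$-linear structure along $\widetilde{\iota}$, which is a purely formal operation on $\textbf{F-Isoc}^{\dagger}(X)_{E_{\lambda_{0}}}$ — is a crystalline coefficient object with $P_{x}(^{\widetilde\iota}\mathcal{F},t)=\,^{\tau}P_{x}(\mathcal{F},t)=\,^{\iota}P_{x}(\mathcal{F},t)$. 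Hence $^{\widetilde\iota}\mathcal{F}$ is an $\iota$-companion, it is crystalline, and the finite collection $\{{}^{\tau}\mathcal{F}: \tau\in\text{Gal}(\widetilde E/\mathbb{Q})\}$ is a complete set of $p$-adic companions. The main obstacle — and the reason the hypothesis is needed — is that without inertness an abstract $\iota\in\text{Aut}_{\mathbb{Q}}(\overline{\mathbb{Q}}_{p})$ can send $\mathcal{F}$ to something whose Frobenius polynomials are $\lambda$-adically "far" in a way no continuous automorphism realizes, and producing a crystalline object with prescribed non-continuously-twisted Frobenius data is, as remarked after Theorem~\ref{Theorem:rank_2_slope_bounds}, completely unknown; the inert case sidesteps this because the decomposition group is the full Galois group, so every relevant $\iota$ is, up to harmless ambiguity, continuous.
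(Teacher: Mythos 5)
Your overall route is the same as the paper's: construct the missing $p$-adic companions as twists of $\mathcal{F}$ by \emph{continuous} automorphisms of a local coefficient field, note that such a twist transforms the Frobenius polynomials by the corresponding automorphism, and conclude with Theorem \ref{Theorem:Brauer_Nesbitt_coefficients}. However, the justification of your key step contains two genuine errors. First, having a single prime of $E$ above $p$ does \emph{not} imply that the decomposition group at $\lambda_{0}$ is all of $\text{Gal}(\widetilde{E}/\mathbb{Q})$; that equivalence holds only when $E/\mathbb{Q}$ is Galois. For example, if $E$ is a non-Galois cubic field in which $p$ is inert, the decomposition group inside $S_{3}$ is cyclic of order $3$, yet $E$ has a unique prime over $p$; so ``the local Galois group surjects onto the global one'' fails. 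What the hypothesis actually gives --- and all that is needed, since the $P_{x}(\mathcal{F},t)$ lie in $E[t]$ rather than $\widetilde{E}[t]$ --- is transitivity of the decomposition group on the embeddings $E\hookrightarrow\overline{\mathbb{Q}}_{p}$; equivalently, after fixing $E\hookrightarrow K\hookrightarrow\overline{\mathbb{Q}}_{p}$ with $K/\mathbb{Q}_{p}$ finite Galois containing the coefficient field, the precomposition map $\text{Gal}(K/\mathbb{Q}_{p})\rightarrow\text{Hom}_{\mathbb{Q}\text{-alg}}(E,\overline{\mathbb{Q}}_{p})$ is surjective. Second, your $\widetilde{\iota}$ need not exist as stated: an embedding $E\rightarrow\overline{\mathbb{Q}}_{p}$ inducing $\lambda_{0}$ extends to a continuous \emph{embedding} of $E_{\lambda_{0}}$ into $\overline{\mathbb{Q}}_{p}$, but not to an automorphism of $E_{\lambda_{0}}$ unless $E_{\lambda_{0}}/\mathbb{Q}_{p}$ happens to be Galois. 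Relatedly, the preliminary reduction that $\mathcal{F}$ descends to coefficient field exactly $E_{\lambda_{0}}$ is unjustified: Brauer--Nesbitt pins the object down by its Frobenius data but does not by itself descend the coefficient field (there can be an obstruction), and in any case this reduction is unnecessary.

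Both problems are repaired by the paper's formulation: take the coefficient field to be a finite \emph{Galois} extension $K/\mathbb{Q}_{p}$ containing a completion of $E$, and for $g\in\text{Gal}(K/\mathbb{Q}_{p})$ form $^{g}\mathcal{F}=(M,f\circ g^{-1})$ in the notation of Definition \ref{Definition:extend_scalars}; then $P_{x}(^{g}\mathcal{F},t)={}^{g}P_{x}(\mathcal{F},t)$, and the surjectivity above (which is exactly where the unique-prime hypothesis enters) shows that every $\iota$-companion is realized by some $^{g}\mathcal{F}$, whence the claim by Theorem \ref{Theorem:Brauer_Nesbitt_coefficients}. Finally, the paragraph about $r$-adic companions for $r\neq p$ is irrelevant to the statement, which concerns only companions at the residue characteristic of the coefficients of $\mathcal{F}$.
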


\begin{rem*}
The hypothesis of Corollary \ref{Corollary:all_companions_Galois_twists} are certainly satisfied if $p$ (resp. $l$) is either totally inert or totally ramified in $E$.
\end{rem*}
\begin{proof}
We assume $\mathcal{F}$ is crystalline; the argument in the \'etale case will be precisely analogous. Suppose $\calF\in\fisocd{X}_K$ where $K/\Qp$ is a $p$-adic local field; we might as well suppose $K$ is Galois over $\mathbb{Q}_{p}$. As there is a
unique prime $\mathfrak{p}$ above $p$ in $E$, there is a unique $p$-adic completion of $E$. We now explicitly construct all $p$-adic companions. For every $g\in\text{Gal}(K/\Qp)$, consider the object $^{g}\calF$: in terms of Definition \ref{Definition:extend_scalars}, if $\calF=(M,f)$, then $^{g}\calF:=(M,f\circ g^{-1})$. We claim this yields all $p$-adic companions.

Fix embeddings $E\hookrightarrow K\hookrightarrow\Qpbar$.
Then there is a natural map
\[
\text{Gal}(K/\Qp)\rightarrow\Hom_{\QQ\text{-alg}}(E,\Qpbar)
\]
given by precomposing with the fixed embeding. As there is a single $\mathfrak{p}$ over $p$, this map is surjective. The result follows from Theorem \ref{Theorem:Brauer_Nesbitt_coefficients}, and the fact that $P_{x}(^{g}\calE,t)=\ ^{g}P_{x}(\calE,t)$ \cite[Proposition 6.16]{krishnamoorthy2017rank}.
\end{proof}
\begin{rem}
In the crystalline setting of Corollary \ref{Corollary:all_companions_Galois_twists}, the Newton polygons of all $p$-adic companions of $\calF$ are the same at all closed points. This does not contradict the example
\cite[Example 2.2]{koshikawa2015overconvergent}; in his example,
$p$ splits completely in the reflex field of the Hilbert modular
variety, and the reflex field is the same as the field generated by
the characteristic polynomial of Frobenius elements over all closed
points.
\end{rem}

\section{Barsotti-Tate Groups and Dieudonn\'e Crystals}\label{Section:BT groups}

In this section, again let $X/k$ denote a finite type scheme over a perfect field.
%A convenient short reference for this section is de Jong's 1998 ICM address \cite{de1998barsotti}.
\begin{defn}
A \emph{Dieudonn\'e crystal }on $X$ is a triple $(M,F,V)$ where $(M,F)$ is an $F$-crystal (in finite locally free modules) on $X$ and $V:M\rightarrow Frob^{*}M$ is a map of crystals such that $V\circ F=p$
and $F\circ V=p$. We denote the category of Dieudonn\'e Crystals on
$X$ by $DC(X)$.
\end{defn}

\begin{rem}
Let $U/k$ be an locally complete intersection morphism over a perfect field $k$. Then the natural forgetful functor $DC(U)\rightarrow FC(U)$ is fully faithful. This follows from the fact that $H_{cris}^{0}(U)$ is $p$-torsion free. This in turn follows from the fact that if $U=\text{Spec}(A)$ is affine, then the $p$-adic completion of a PD envelope of $A$ is $W(k)$-flat \cite[Lemma 4.7]{de1999crystalline}.
\end{rem}

\begin{notation}
\cite[1.4.1.3]{chai2014complex} Let $BT(X)$ denote the category of Barsotti-Tate (``$p$-divisible'') groups on $X$.
\end{notation}

For a thorough introduction to $p$-divisible groups and their contravariant Dieudonn\'e theory, see \cite{grothendieck1974groupes,berthelot1982theorie,chai2014complex}. In particular, there exist Dieudonn\'e functors: given a BT group $\calG/X$ one can construct a Dieudonn\'e crystal $\DD(\calG)$ over $X$. According to our conventions, $\mathbb{D}$ is the \emph{contravariant} Dieudonn\'e functor. Moreover, given a $p$-divisible group $\calG$ on $X$, there exists the (Serre) dual, which we denote by $\calG^t$ (see \cite[10.7]{chai2009moduli} or \cite[1.4.1.3]{chai2014complex}), whose constituent parts are obtained via Cartier duality of finite, locally free group schemes. 
\begin{defn}
\cite[3.3]{chai2014complex} Let $\calG$ and $\calH$ be Barsotti-Tate groups on $X$ with the same height. Then an \emph{isogeny} is a homomorphism
\[
\phi\colon\calG\rightarrow\calH
\]
whose kernel is represented by a finite, locally free group scheme over $X$. 
\end{defn}

\begin{defn}
\cite[1.4.3]{chai2014complex} Let $\calG$ be a Barsotti-Tate group on $X$. A \emph{quasi-polarization} is an isogeny
\[
\phi\colon\calG\rightarrow\calG^{t}
\]
that is skew-symmetric in the sense that $\phi^{t}=-\phi$ under the natural isomorphism $\calG\rightarrow(\calG^{t})^{t}$.
\end{defn}

\begin{notation}
Let $\calG$ be a Barsotti-Tate group on $X$. We denote by $QPol(\calG)$ the set of quasi-polarizations of $\calG$.
\end{notation}

The next theorem is a corollary of \cite[Main Theorem 1]{de1995crystalline}.
\begin{thm}
\label{Theorem:de Jong}(de Jong) Let $X/k$ be a smooth scheme over a perfect field of characteristic $p$. Then the category $BT(X)$ is anti-equivalent to $DC(X)$ via $\mathbb{D}$.
\end{thm}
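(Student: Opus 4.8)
The plan is to deduce this from the cited Main Theorem 1 of \cite{de1995crystalline}, which already asserts that $\mathbb{D}$ is fully faithful on $BT(X)$ for $X/k$ smooth over a perfect field of characteristic $p$; the remaining content is essential surjectivity onto $DC(X)$. First I would record the construction: given a Barsotti-Tate group $\mathcal{G}/X$, the contravariant Dieudonn\'e crystal $\mathbb{D}(\mathcal{G})$ is a crystal in finite locally free $\mathcal{O}_{X/W(k)}$-modules, and the Frobenius on $\mathcal{G}$ together with the Verschiebung furnish maps $F\colon \mathrm{Frob}^*\mathbb{D}(\mathcal{G}) \to \mathbb{D}(\mathcal{G})$ and $V\colon \mathbb{D}(\mathcal{G}) \to \mathrm{Frob}^*\mathbb{D}(\mathcal{G})$ with $FV = VF = p$; one must check $F$ is injective (non-degeneracy), which follows from the fact that $\mathcal{G}$ is $p$-divisible so multiplication by $p$ is an isogeny, hence injective on the Dieudonn\'e crystal after inverting nothing. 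This shows $\mathbb{D}$ lands in $DC(X)$.

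For full faithfulness I would simply invoke \cite[Main Theorem 1]{de1995crystalline}: the natural map $\mathrm{Hom}_{BT(X)}(\mathcal{G},\mathcal{H}) \to \mathrm{Hom}_{DC(X)}(\mathbb{D}(\mathcal{H}),\mathbb{D}(\mathcal{G}))$ is a bijection. The key step, and the one I expect to be the main obstacle to present cleanly, is essential surjectivity: given an abstract Dieudonn\'e crystal $(M,F,V)$ on $X$, produce a $p$-divisible group $\mathcal{G}$ with $\mathbb{D}(\mathcal{G}) \cong (M,F,V)$. The standard approach is to work locally: Zariski-locally on $X$ one can lift to a smooth formal $W(k)$-scheme $\widehat{X}$, where by Grothendieck--Messing / de Jong's comparison the category of $p$-divisible groups on $\widehat{X}$ is described by filtered $F$-crystals (equivalently Dieudonn\'e crystals via the Hodge filtration recovered from $V$), so the local $p$-divisible group exists; then one checks the local pieces glue because $\mathbb{D}$ is fully faithful (so the gluing data on overlaps, a priori a cocycle of crystal-isomorphisms, lifts uniquely to a cocycle of BT-isomorphisms, which then satisfies the cocycle condition by faithfulness). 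The smoothness hypothesis is used precisely here, to guarantee local liftability of $X$ to characteristic zero and the $p$-torsion-freeness of the relevant PD envelopes so that the crystalline-to-BT dictionary is an equivalence locally.

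Finally I would remark that compatibility with duality and with the tensor/extension structure is automatic from the functoriality in \cite{de1995crystalline}, so no separate verification is needed for the applications in later sections. In short: construction and non-degeneracy are a short check, full faithfulness is quoted verbatim, and essential surjectivity is the substantive point, handled by local lifting plus gluing via full faithfulness.
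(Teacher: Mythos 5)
The paper does not actually prove this statement: it is quoted as a corollary of \cite[Main Theorem 1]{de1995crystalline}, and that theorem already asserts the full anti-equivalence between $BT(X)$ and $DC(X)$ for $X$ smooth over a perfect field of characteristic $p$ --- essential surjectivity included. So your reading of the citation is off: you credit de Jong only with full faithfulness and then set out to supply essential surjectivity yourself, but essential surjectivity is precisely the deep content of de Jong's paper (proved there by formal and rigid geometry after reduction to complete local bases; alternative proofs go through Zink--Lau display/window theory), not something that can be re-derived in a short local-lifting argument.

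Moreover, the argument you sketch for essential surjectivity would fail as written. Grothendieck--Messing theory classifies deformations of an \emph{already given} Barsotti--Tate group along a nilpotent divided-power thickening, via lifts of its Hodge filtration; it does not produce a Barsotti--Tate group on a formal lift $\widehat{X}$ (or on $X$) from an abstract Dieudonn\'e crystal $(M,F,V)$, because there is no group on any subscheme to deform. Recovering ``the Hodge filtration from $V$'' only gives a filtration on the restriction of $M$ to the Zariski site of $X$; choosing a lift of that filtration over $\widehat{X}$ still leaves you with filtered crystalline data, and converting such data into a $p$-divisible group is exactly the statement being proved, so the local step is circular. (Your gluing step via full faithfulness is fine, but moot once the local construction is unavailable.) The correct course is the paper's: cite \cite[Main Theorem 1]{de1995crystalline} for the anti-equivalence; the only routine verification one might add is the one you note at the start, that $\mathbb{D}(\mathcal{G})$ together with its $F$ and $V$ indeed defines an object of $DC(X)$.
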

The following lemma partially explains the relationship between slopes and effectivity.
\begin{lem}
\label{Lemma:Katz} Let $X/k$ be a smooth variety over a perfect field of characteristic $p$ and let $\calE$ be an $F$-isocrystal
on $X$. Then
\begin{itemize}
\item all of the slopes of $\mathcal{E}$ at all $x\in|X|$ are non-negative if and only if there exists a dense Zariski open $U\subset X$, with
complement of codimension at least 2, such that $\calE|_{U}$ is effective (i.e., there exists an $F$-crystal $(M,F)$ in finite, locally free modules on $U$ such that $(M,F)\otimes\Qp\cong\calE_U$.
\item Furthermore, all of the slopes of $\calE$ at all points $x\in |X|$ are between 0 and 1 if and only if there exists a dense Zariski open
$U'\subset X$, with complement of codimension at least 2, such that $\calE_{U'}$ comes from a Dieudonn\'e crystal.
\end{itemize}
\end{lem}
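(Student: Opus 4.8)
The plan is to reduce the statement to a local computation via Katz's theory of the slope filtration (the ``Newton polygon'' of an $F$-isocrystal) together with the standard dictionary between effectivity and the existence of an $F$-stable lattice. For the first bullet point, one direction is essentially tautological: if $\mathcal{E}|_U$ comes from an $F$-crystal $(M,F)$ with $F\colon \mathrm{Frob}^*M\hookrightarrow M$, then at every $x\in |U|$ the matrix of $F^d$ (where $d=[\kappa(x):\mathbb{F}_p]$) has entries in $W(\kappa(x))$, hence its Newton polygon has non-negative slopes; since slopes of an $F$-isocrystal are insensitive to passing to a dense open with complementary codimension $\geq 2$ (the Newton polygon is a function on $|X|$ that extends over codimension-one points by e.g. Grothendieck-Katz specialization), all slopes of $\mathcal{E}$ are $\geq 0$. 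For the converse, I would invoke the result that an $F$-isocrystal all of whose slopes are non-negative admits, after restricting to a suitable dense open $U$, a locally free $F$-stable lattice: the slope filtration of Katz gives a filtration of $\mathcal{E}$ by sub-isocrystals with isoclinic graded pieces of slopes $\lambda_1 < \cdots < \lambda_r$, all $\geq 0$; an isoclinic isocrystal of non-negative slope $\lambda = s/t$ becomes effective after an étale base change trivializing the relevant unit-root part, and one then descends the lattice. The codimension-$2$ caveat is precisely what absorbs the locus where the slope filtration fails to be by subbundles or where the lattice fails to be locally free; over a smooth variety the bad locus has codimension $\geq 1$, and after a further shrinking (or the standard trick of removing the non-normal/non-free locus of a coherent sheaf, which has codimension $\geq 2$ once we are on a normal scheme and the generic rank is locally constant) we get it down to codimension $\geq 2$.

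For the second bullet point, I would bootstrap from the first. Given that all slopes lie in $[0,1]$, the first part already produces a dense open $U$ (complement codimension $\geq 2$) and an $F$-crystal $(M,F)$ on $U$ with $(M,F)\otimes\mathbb{Q}\cong \mathcal{E}|_U$. The point is to produce the Verschiebung $V\colon M\to \mathrm{Frob}^*M$ with $FV=VF=p$. Dually, consider $(M,F)^\vee$ twisted appropriately: the condition that all slopes of $\mathcal{E}$ are $\leq 1$ says exactly that all slopes of $\mathcal{E}^*(-1)$ (equivalently $\mathcal{E}^\vee\otimes\mathbb{Q}_p(-1)$) are $\geq 0$, so $\mathcal{E}^*(-1)$ is also effective after possibly shrinking $U$ again (still codimension $\geq 2$). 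Now use Lemma \ref{Lemma:two_notions_of_duality}: the dual Dieudonné crystal $\mathscr{M}^\vee$ satisfies $\mathscr{M}^\vee\otimes\mathbb{Q}\cong (\mathscr{M}\otimes\mathbb{Q})^*(-1)$, so an $F$-crystal structure on the $F$-isocrystal $\mathcal{E}^*(-1)$, together with the given $F$-crystal structure on $\mathcal{E}$, furnishes candidates for $M$ and its dual, and the pairing $M\otimes M^\vee\to \mathbb{Z}_p(-1)$ (which exists integrally once both lattices are chosen compatibly — possibly after one more shrink to make the pairing perfect, losing only codimension $\geq 2$) lets one define $V$ as the adjoint of $p\cdot F^{-1}$ acting on the dual. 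Concretely: $V := \mathbb{D}$-style adjoint so that $\langle Fm, m'\rangle = \langle m, Vm'\rangle^\sigma$-type identity forces $FV=VF=p$. Thus $\mathcal{E}|_U$ underlies a Dieudonné crystal. Conversely, if $\mathcal{E}|_U$ comes from a Dieudonné crystal $(M,F,V)$, then $FV=p$ forces, at each point, the Newton slopes of $F^d$ to lie between $0$ and $1$ (since $F$ integral gives slopes $\geq 0$ and $V=pF^{-1}$ integral gives slopes $\leq 1$); again slopes on $U$ agree with slopes on $X$ away from codimension $\geq 2$, hence everywhere by extension over codimension-one points.

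The main obstacle I anticipate is the bookkeeping of the codimension-$\geq 2$ open sets and the integrality of the pairing in the second bullet: producing an $F$-crystal lattice from a non-negative-slope isocrystal is classical but the lattice is only canonical up to isogeny, and to glue the chosen lattice for $\mathcal{E}$ with the one for $\mathcal{E}^*(-1)$ into a genuine Dieudonné crystal (perfect pairing landing in $\mathbb{Z}_p(-1)$, not just $\mathbb{Q}_p(-1)$) requires choosing the two lattices dual to each other, which one can arrange by taking $M$ and then $M^\vee := \mathrm{Hom}(M,\mathcal{O}_{X/W})$ and checking that $V := $ (the adjoint of $F$ under the tautological pairing, rescaled by $p$) actually lands in $\mathrm{Frob}^*M$ rather than $\mathrm{Frob}^*M\otimes\mathbb{Q}$ — and this last integrality is exactly where the hypothesis ``slopes $\leq 1$'' is used, pointwise, together with a shrinking of $U$ to kill the locus where it fails. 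I would handle this by working first on the generic point (where everything is a question about a single $F$-isocrystal over a field and the slope bounds give the claim directly), spreading out, and then removing the resulting codimension-$\geq 1$ bad locus and any non-locally-free locus, the union of which, on a smooth variety after the standard normalization trick, can be taken of codimension $\geq 2$.
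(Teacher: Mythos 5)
Your two ``easy'' directions (deducing the slope bounds on all of $X$ from effectivity, resp.\ the Dieudonn\'e property, on $U$) are fine and coincide with what the paper does, namely Grothendieck--Katz semicontinuity of the Newton polygon with locally constant endpoints together with the purity of the Newton jump locus. The genuine problems are in the two constructive directions. For the first bullet the paper does not reprove the existence of a lattice: it cites Katz (Theorem 2.6.1 and the remark on p.~154 of \cite{katz1979slope}, via \cite[Remark 2.3]{kedlaya2016notes}), and that citation is exactly the step your sketch fails to supply. Your route through the slope filtration cannot work as written: the slope filtration only exists over the locus where the Newton polygon is constant, and by the very purity theorem in play the Newton jump locus is pure of codimension $1$, so the ``bad locus'' you propose to discard is in general a divisor, and no amount of further shrinking lowers the codimension of a locus you must remove. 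Producing an $F$-stable lattice \emph{across} the Newton jump divisor (away from codimension $2$) is precisely the hard content of Katz's theorem. The auxiliary claim that an isoclinic isocrystal of slope $s/t\geq 0$ becomes effective after an \'etale base change ``trivializing the unit-root part'' is also unjustified: unit-root objects are trivialized only after pro-finite \'etale covers, and the descent of a chosen lattice along such a cover is not addressed.

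For the second bullet, your construction of $V$ has an integrality gap that cannot be repaired by shrinking $U$. For a \emph{given} lattice $\mathscr{M}$ with Newton slopes in $[0,1]$, the map $pF^{-1}$ (equivalently, your adjoint of $F$ under the pairing) need not preserve $\mathscr{M}$, and the failure is not supported on a proper closed subset: already over a point, the lattice $M=W(k)^{2}$ with $F(e_{1})=e_{2}$, $F(e_{2})=p^{2}e_{1}$ has both Newton slopes equal to $1$, yet $pF^{-1}(e_{1})=p^{-1}e_{2}\notin M$. So ``shrinking $U$ to kill the locus where integrality fails'' is not an available move; what is needed is to replace the lattice within its isogeny class by one satisfying $p\mathscr{M}\subset F(\text{Frob}^{*}\mathscr{M})$, which is again exactly what the cited Katz isogeny theorem provides. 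The paper's final step then simply defines $V=pF^{-1}$ on such a lattice; your detour through $\mathcal{E}^{*}(-1)$ and a dual lattice does not avoid this lattice-choice issue, since the perfectness and integrality of the pairing you need is equivalent to the same divisibility condition.
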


\begin{proof}
The ``only if'' of both statements follows from the following facts.
\begin{enumerate}
\item If $\calE$ is an $F$-isocrystal on $X$, then the Newton polygon is an upper semi-continuous function with locally constant right endpoint (a theorem of Grothendieck, see e.g. \cite[Theorem 2.3.1]{katz1979slope}).
\item The locus of points where the slope polygon does not coincide with it's generic value is pure of codimension 1 \cite[Theorem 3.12 (b)]{kedlaya2016notes}.
\end{enumerate}
The ``if'' of the first statement is implicit in \cite[Theorem 2.6.1]{katz1979slope}; see also \cite[Remark 2.3]{kedlaya2016notes}.

Finally, let $(M,F)$ be an $F$-crystal in finite, locally free modules on $U$ with all Newton slopes $\leq 1$. We explain the small modification to the proof of \cite[Theorem 2.6.1]{katz1979slope} to find an open set $U'\subset U$, with complementary codimension at least 2, and an isogenous $F$-crystal $(M',F)$ (in finite, locally free modules) on $U'$ that underlies a Dieudonn\'e crystal.

Set $V:=F^{-1}\circ p$. Then $V$ does not necessarily stabilize the lattice $M$. However, the pair $(M,V)$ has the structure of $\sigma^{-1}\!\mbox{-}F$-isocrystal in the language of \cite[p. 115]{katz1979slope}. (Fortunately, every result in Katz's paper is written for $\sigma^{a}\!\mbox{-}F$-crystals for all $a\neq 0$, not just the positive ones!) Consider the following (\`a priori infinite) sum, obviously stabilized by $V$:
$$\tilde{M}:=\displaystyle\sum_{n\geq 0} V^nM\subset \calE.$$
Then \cite[Page 152]{katz1979slope} implies that $\tilde{M}$ is crystal in \emph{coherent} modules on $U$. It follows from the fact that $FV=VF=p$ that $\tilde{M}$ is stabilized by $F$. Running the rest of the argument of \cite[Theorem 2.6.1]{katz1979slope}, we obtain an open $U'\subset U$ of complementary codimension at least 2 and a crystal $\calM'\subset \calE_{U'}$ in finite, locally free modules on $U'$ that is stabilized by $F$ and $V$. (The argument amounts to taking the double dual, which is automatically a reflexive sheaf, and noting that a reflexive sheaf on a regular scheme is locally free away from a closed subset of codimension at least 3. See also the following remark \cite[Page 154]{katz1979slope}.) Therefore, the triple $(\calM',F,V)$ is a Dieudonn\'e crystal on $U'$ as desired.
\end{proof}
\begin{rem}
The $F$-crystal constructed in Lemma \ref{Lemma:Katz} is \emph{not unique}.
\end{rem}
\begin{rem}It is \emph{not true} that if $(M,F)$ is an $F$-crystal with all Newton slopes in $[0,1]$, then $(M,F)$ underlies a Dieudonn\'e crystal (equivalently, it is not true that $(M,F)$ has all Hodge slopes 0 or 1). Here is an example of an $F$-crystal over $\Fp$ with Newton slopes all 1 but Hodge slopes 0 and 2: $M=\Zp^2$, and $F$ is given by the following matrix:
$$F:=
\begin{pmatrix}
0 & p^2 \\
1 & 0 
\end{pmatrix}
$$
\end{rem}
The following notion of \emph{duality} of Dieudonn\'e crystals is designed to be compatible with (Serre) duality of BT groups. In particular, the category $DC(X)$ admits a natural anti-involution. 
\begin{defn}
\cite[2.10]{chai2009moduli} Let $\scrM=(M,F,V)$ be a Dieudonn\'e crystal on $X$. Then the \emph{dual} $\scrM^{\lor}$ is a Dieudonn\'e crystal on $X$ whose underlying crystal is defined on divided power
test rings $(R,I,(\gamma_{i}))$ as
\[
\scrM^{\lor}(R,I,(\gamma_{i}))=\Hom_{R}(M(R),R)
\]
The operators $F$ and $V$ are defined as follows: if $(R,I,(\gamma_{i}))$
is a PD test object and $f\in\Hom_{R}(M(R),R)$, then
\[
F(f)(m)=f(Vm)
\]
\end{defn}

\[
V(f)(m)=f(Fm)
\]
It is an easy exercise to check that these rules render $\scrM^{\lor}$ a Dieudonn\'e crystal. 
\begin{rem}
\label{Remark:cartier_dual_dieudonne}If $\calG\rightarrow S$ is a BT group, then $\DD(\calG)^{\lor}\cong\DD(\calG^{t})$
. \cite[5.3.3.1]{berthelot1982theorie}.
\end{rem}

We record the following lemma, which is surely well-known, as we could not find a reference. It compares the dual of a Dieudonn\'e crystal with the dual of the associated $F$-isocrystal. 
\begin{lem}
\label{Lemma:two_notions_of_duality}Let $\mathscr{M}=(M,F,V)$ be a Dieudonn\'e crystal on $X$. Then the following two $F$-isocrystals on $X$ are naturally isomorphic:
\[
\scrM^{\lor}\otimes\QQ\cong(\scrM\otimes\QQ)^{*}(-1)
\]
\end{lem}

\begin{proof}
There is a natural perfect pairing to the Lefschetz $F$-crystal \[
\scrM\otimes\scrM^{\lor}\rightarrow\Zp(-1)
\]
as $FV=p$. There is also a perfect pairing
\[
(\scrM\otimes\QQ)\otimes(\scrM\otimes\QQ)^{*}\rightarrow\Qp
\]
to the ``constant'' $F$-isocrystal. Combining these two pairings
shows the result. 
\end{proof}

\section{A question of Grothendieck}\label{Section:Grothendieck}

In this section, we answer the question posed in \cite[4.9]{grothendieck1966theoreme}, see Corollary \ref{Corollary:grothendieck}. The argument is indeed similar to the arguments of \cite[Section 4]{grothendieck1966theoreme}; the primary new input is \cite[2.5]{de1998homomorphisms}. However, some complications arise due to the fact that reduced schemes are not necessarily geometrically reduced over imperfect ground fields.

First, we record the following lemma, which is uniqueness of analytic continuation.
\begin{lem}
\label{Lemma:formal_maps_agree}Let $X$ be an integral noetherian scheme and let $Y$ be a separated scheme. Let $Z\subsetneqq X$ be a non-empty closed subscheme. Let $u,v\colon X\rightarrow Y$ be two morphisms that agree when restricted to the formal scheme $X_{/Z}$. Then $u=v$.
\end{lem}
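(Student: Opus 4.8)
The plan is to exploit the separatedness of $Y$ to turn the equality $u=v$ into a condition about a single closed subscheme of $X$. Since $Y$ is separated, the diagonal $\Delta_Y\colon Y\to Y\times Y$ is a closed immersion, so the equalizer $E$ of $u$ and $v$ (that is, $X\times_{(u,v),\,Y\times Y,\,\Delta_Y}Y$) is a \emph{closed} subscheme of $X$, characterized by: a morphism $T\to X$ factors through $E$ if and only if its two composites with $u$ and $v$ coincide. Letting $\mathcal{J}\subset\mathcal{O}_X$ be the ideal sheaf of $Z$ and $Z_n\hookrightarrow X$ the closed subscheme defined by $\mathcal{J}^{\,n+1}$, the hypothesis that $u$ and $v$ agree on $X_{/Z}$ unwinds (a morphism from the formal scheme $X_{/Z}$ to the scheme $Y$ being the same as a compatible system of morphisms $Z_n\to Y$) to the statement that $Z_n\hookrightarrow X$ factors through $E$ for every $n\geq 0$. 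So it suffices to prove $E=X$.

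The core of the argument is local. First I would check that $E$ contains a dense open: it is enough to show that for every affine open $U=\operatorname{Spec}A\subseteq X$ meeting $Z$, the ideal $I\subseteq A$ defining $E\cap U$ vanishes, since then $E$ contains the open set $W$ obtained as the union of all such $U$, and $W\supseteq Z$ is nonempty hence dense in the irreducible scheme $X$. Fix such a $U$ and let $\mathfrak{a}\subseteq A$ be the ideal of $Z\cap U$; since $X$ is integral and noetherian, $A$ is a noetherian domain, and since $\emptyset\neq Z\cap U\subsetneq U$ (using $Z\subsetneq X$ and the irreducibility of $X$), the ideal $\mathfrak{a}$ is nonzero and proper. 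Factoring each $Z_n\hookrightarrow U$ through $\operatorname{Spec}(A/I)$ means exactly $I\subseteq\mathfrak{a}^{\,n+1}$ for all $n$, hence $I\subseteq\bigcap_{m\geq 1}\mathfrak{a}^{\,m}$. Now I would apply the Krull intersection theorem: there is $x\in\mathfrak{a}$ with $(1+x)\bigl(\bigcap_m\mathfrak{a}^{\,m}\bigr)=0$; as $\mathfrak{a}$ is proper $1+x$ is nonzero, and as $A$ is a domain this forces $\bigcap_m\mathfrak{a}^{\,m}=0$, so $I=0$.

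To conclude I would globalize using only the topology and reducedness of $X$: $E$ is a closed subscheme of $X$ whose underlying space contains the dense open $W$, hence equals all of $X$; and a closed subscheme of a reduced scheme with full support is the whole scheme (on each affine chart its ideal lies in the nilradical, which is $0$). Therefore $E=X$, i.e. $u=v$.

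The proof is short, so there is no single hard step; the main point requiring care is simply that the affine opens used genuinely meet $Z$, so that $\mathfrak{a}$ is a nonzero proper ideal and the Krull intersection theorem applies in the form $\bigcap_m\mathfrak{a}^{\,m}=0$, together with the bookkeeping that propagates the conclusion from a neighborhood of $Z$ to all of $X$ via irreducibility and reducedness rather than any further property of $u$ and $v$. A secondary point is to articulate cleanly why agreement on the formal scheme $X_{/Z}$ is equivalent to agreement on every infinitesimal neighborhood $Z_n$, which is where the formal-completion hypothesis is actually used.
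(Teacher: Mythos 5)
Your proof is correct and takes essentially the same route as the paper's: both come down to the Krull intersection theorem on affine opens meeting $Z$ together with separatedness of $Y$, the paper packaging this as joint schematic density of the $Z_n$ via EGA IV 11.10.1, while you unpack the same mechanism directly through the equalizer closed subscheme. Your explicit globalization step --- using irreducibility and reducedness of $X$ to pass from a neighborhood of $Z$ to all of $X$ --- is exactly the point the paper delegates to the schematic-density formalism, and handling it by hand as you do is a sound (indeed slightly more careful) substitute.
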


\begin{proof}
A morphism $X\rightarrow Y$ is uniquely determined by its restriction to any non-empty (dense) open $U\subset X$ because $X$ is reduced and $Y$ is separated. Therefore, to check the lemma we may restrict to any non-empty open subset $U\subset X$. Denote by $Z_{n}$ the $n$\textsuperscript{th} formal neighborhood of $Z$. Then the data of the map $X_{/Z}\rightarrow X$ is equivalent to the data of the maps $Z_{n}\rightarrow X$ \cite[Tag 0AI6]{stacks-project}. We wish to prove that the set of closed immersions $\{Z_{n}\rightarrow X\}_{n\geq0}$ are jointly schematically dense \cite[D\'efinition 11.10.2]{grothendieckegaIV_3}.

Pick an open affine $\Spec(R)\subset X$ that intersects $Z$ non-trivially and let $I$ be the ideal of $R$ corresponding to the scheme theoretical intersection of $Z$ with $\Spec(R)$. As $R$ is a noetherian domain, Krull's intersection theorem implies that the map of rings
\[
R\rightarrow{\displaystyle \lim_{n}}R/I^{n}
\]
is injective and criterion \cite[11.10.1(a)]{grothendieckegaIV_3} then implies the desired schematic density.
\end{proof}

The following definition will be of use to us.
\begin{defn}An integral domain $A$ is called a \emph{Krull domain} if the following three conditions hold.\begin{enumerate}
\item For each height one prime ideal $\mathfrak{p}$ of $A$, the localization $A_{(\mathfrak{p})}$ is a discrete valuation ring.
\item We have the following equality: $$\displaystyle A=\cap_{\mathfrak p}A_{(\mathfrak p)}$$
where $\mathfrak{p}$ runs through the height one prime ideals of $A$ and the intersection takes place in $\text{Frac}(A)$. 
\item Any non-zero $x\in A$ is contained in only finitely many height one prime ideals of $A$.
\end{enumerate}
An integral scheme $X$ is called an \emph{integral Krull scheme} if and only if for every open affine $\Spec(R)\subset X$, the ring $R$ is a Krull domain.
\end{defn}
A noetherian domain is Krull if and only if it is normal. The utility of the notion of a Krull domain is the following: while the integral closure of a noetherian domain need not be noetherian, it is always Krull by a theorem of Mori-Nagata \cite[Chapitre 0, 23.2.8 (iii)]{grothendieckegaIV_0}.

We now recall a well-known theorem of de Jong and Tate.
\begin{thm}
\label{Theorem:extending_BT_dJ}(de Jong-Tate) Let $X$ be an integral Krull scheme with generic point $\eta$. Let $G$ and $H$ be $p$-divisible groups over $X$. Then the natural map
\begin{equation}\label{equation:hom_BT_eta}
\Hom_{X}(G,H)\rightarrow\Hom_{\eta}(G_{\eta},H_{\eta})
\end{equation}
is an isomorphism. Moreover, if $X$ is noetherian and $f\in \Hom_X(G,H)$ is an isogeny when restricted to $\eta$, then $f$ is an isogeny.
\end{thm}
\begin{proof}
We first prove that Equation \ref{equation:hom_BT_eta} is an isomorphism. We immediately reduce to the case where $X\cong \Spec(R)$. As $R$ is a Krull domain,
\[
R=\cap R_{\mathfrak{p}}
\]
where $\mathfrak{p}$ runs over all of the height 1 primes. By the other defining characteristic of Krull domains, if $\mathfrak{p}$ is a height 1 prime, then $R_{\mathfrak{p}}$ is a discrete valuation ring. We are therefore reduced to proving the lemma over a discrete valuation ring, which results from \cite[Corollary 1.2]{de1998homomorphisms} when the generic fiber has characteristic $p$ and \cite[Theorem 4]{tate1967p} when the generic fiber has characteristic 0.

Now, if $X$ is noetherian and $f\colon G\rightarrow H$ is a homomorphism, the set of points where it is an isogeny is open by \cite[3.3.8]{chai2014complex} and closed by the discussion after \cite[3.3.10]{chai2014complex}. The result follows.

\end{proof}

\begin{defn}
Let $K/k$ be a field extension. We say that it is \emph{primary} if $k$ is separably closed in $K$ and \emph{regular} if $k$ is algebraically closed in $K$.
\end{defn}

The following proposition directly imitates Conrad's descent-theoretic proof of a theorem of Chow \cite[Theorem 3.19]{conrad2006chow}.
\begin{prop}
\label{Proposition:homomorphism_BT_primary_extension}Let $K/k$ be a primary extension of fields. Let $G$ and $H$ be BT groups on $k$. Then the natural injective map
\[
\Hom_{k}(G,H)\rightarrow\Hom_{K}(G_{K},H_{K})
\]
is an isomorphism.
\end{prop}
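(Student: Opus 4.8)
The plan is to mimic Conrad's descent-theoretic proof of Chow's theorem, as the statement explicitly advertises. First I would reduce to the case where $K/k$ is a \emph{finitely generated} primary extension: any homomorphism $G_K \to H_K$ is defined over a finitely generated subextension $k \subset K' \subset K$ (because $G$, $H$, and any homomorphism between $p$-divisible groups are all ``finite-type'' data in an appropriate sense — concretely, a homomorphism of BT groups is a compatible system of homomorphisms of the finite locally free group schemes $G[p^n] \to H[p^n]$, each of which is a morphism of finite-type $k$-schemes and hence defined over a finitely generated field), and $K'/k$ is again primary since $k$ is separably closed in $K \supset K'$. So assume $K/k$ is finitely generated and primary.

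Next, the key case is when $K/k$ is a \emph{primary} finitely generated extension, and here one writes $K$ as the function field of a geometrically integral $k$-variety $V$ (geometric integrality is exactly the translation of ``$k$ separably closed in $K$'', up to replacing $V$ by a dense open — this is where the primary, as opposed to regular, hypothesis is used, together with the remark in the section intro that reduced need not mean geometrically reduced). Then I would run the standard descent argument: given $f \colon G_K \to H_K$, spread it out to a homomorphism $f_U \colon G_U \to H_U$ over a dense open $U \subset V$; the two pullbacks $\mathrm{pr}_1^* f_U$ and $\mathrm{pr}_2^* f_U$ to (an open subset of) $U \times_k U$ agree over the generic point of the diagonal, hence agree on a dense open of $U \times_k U$ by Theorem \ref{Theorem:extending_BT_dJ} applied over the normal integral scheme $U\times_k U$ — actually more simply they agree over the generic point of $U \times_k U$ because both restrict to $f$ over $\mathrm{Spec} K$ viewed inside $U \times_k U$ via the diagonal's generic point, and $\mathrm{Hom}$ injects under generization by \ref{Theorem:extending_BT_dJ}. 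This verifies the cocycle/descent datum for $f_U$ with respect to the fppf (indeed Zariski-local-on-the-base, after shrinking) covering $U \to \mathrm{Spec}\, k$; since $\mathrm{Hom}(G[p^n], H[p^n])$ is representable by a finite-type affine $k$-scheme and homomorphisms form a sheaf, $f_U$ descends to a homomorphism $G \to H$ over $k$ whose base change to $K$ recovers $f$. Injectivity of the restriction map is clear (e.g.\ from \ref{Theorem:extending_BT_dJ} over $\mathrm{Spec} K \to \mathrm{Spec} k$, or just because $G$ is flat over $k$).

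I expect the main obstacle to be the bookkeeping needed to make the descent rigorous at the level of $p$-divisible groups rather than a single finite-type scheme: one must check that the descended system of maps $\{G[p^n] \to H[p^n]\}$ is compatible with the transition maps, and that ``spreading out'' and ``the two pullbacks agree'' can be done uniformly in $n$ (equivalently, work with the scheme $\mathbf{Hom}(G[p^n],H[p^n])$ of Oort and note the homomorphism locus is represented by a closed, hence separated, finite-type subscheme, so agreement on a schematically dense open forces agreement). The subtlety flagged in the section's introduction — that over imperfect $k$ a reduced scheme need not be geometrically reduced — is precisely what forces the ``primary'' hypothesis and the passage to a geometrically integral model $V$; handling this carefully (rather than assuming $V$ smooth) is the one genuinely delicate point, and it is resolved exactly as in Conrad's treatment by replacing $V$ with a dense open subscheme that is geometrically integral.
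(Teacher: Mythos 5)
Your overall strategy (reduce to a finitely generated extension, then descend along $K/k$ using the two projections, exactly as in Conrad's proof of Chow's theorem) is the same as the paper's, but there are two genuine gaps. First, your translation of ``primary'' is wrong: $k$ being separably closed in $K$ corresponds to geometric \emph{irreducibility} of a model, not geometric integrality; a finitely generated primary extension that is not regular (e.g.\ a nontrivial purely inseparable extension) is \emph{not} the function field of any geometrically integral $k$-variety, and no shrinking to a dense open can repair this, since geometric reducedness is a property of the function field itself. Consequently the purely inseparable layer of $K/k$ is simply not covered by your argument. The paper instead d\'evisses a finitely generated primary extension into a finite purely inseparable extension followed by a regular one, and treats the inseparable case separately, where $K\otimes_k K$ is an Artin local ring killed by $p$ with nilpotent augmentation ideal and the needed statement is Drinfeld's rigidity lemma.

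Second, and more seriously, the crux of the descent argument is precisely the step you wave at: knowing that $p_1^*f$ and $p_2^*f$ agree after restriction along the diagonal $\Delta\colon\operatorname{Spec}K\to U\times_k U$ (equivalently $K\otimes_kK\twoheadrightarrow K$), you must conclude that they agree on $U\times_k U$, i.e.\ you need injectivity of the restriction map $\operatorname{Hom}_{K\otimes_kK}(G,H)\to\operatorname{Hom}_K(G,H)$ along $\Delta$. Theorem \ref{Theorem:extending_BT_dJ} does not give this: it says that over a noetherian \emph{normal integral} base a homomorphism is determined by its fiber at the \emph{generic} point, whereas the generic point of the diagonal is a specialization of the generic point of $U\times_k U$, so ``injectivity under generization'' is exactly the wrong direction (and in the non-regular case $U\times_k U$ is not even reduced, so the theorem does not apply at all). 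What is actually needed is a rigidity statement for Barsotti--Tate groups in characteristic $p$: injectivity of reduction modulo a nilpotent ideal (Drinfeld's lemma, for the Artin local ring $K\otimes_kK$ in the inseparable case) and injectivity of restriction from the local ring at $\ker\Delta$ to its residue field in the regular case, where $K\otimes_kK$ is a noetherian domain (the paper localizes and cites the rigidity result of Chai--Conrad--Oort). Without supplying this ingredient the descent datum is never verified, so the proof as proposed does not go through; with it, your spreading-out formulation would be a cosmetic variant of the paper's argument.
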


\begin{proof}
As $\Hom_{K}(G_{K},H_{K})$ is a finite free $\Zp$-module, we immediately reduce to the case that $K/k$ is finitely generated. Let $K'\cong K\otimes_{k}K$. Then we have a diagram
\[
k\rightarrow K\rightrightarrows K'
\]
where the double arrows refer to the maps $K\rightarrow K\otimes_{k}K$ given by $p_{1}\colon\lambda\mapsto\lambda\otimes1$ and $p_{2}\colon\lambda\mapsto1\otimes\lambda$ respectively. Let $F\in\Hom_{K}(G_{K},H_{K})$. By Grothendieck's
faithfully flat descent theory, to show that $F$ is in the image of $\text{Hom}_{k}(G,H)$, it is enough to prove that $p_{1}^{*}F=p_{2}^{*}F$ under the canonical identifications $p_{1}^{*}G\cong p_{2}^{*}G$ and $p_{1}^{*}H\cong p_{2}^{*}H$ \cite[Theorem 3.1]{conrad2006chow}. There is a distinguished \emph{diagonal point}: $\Delta:K'\twoheadrightarrow K$, and restricting to the diagonal point the two pullbacks agree $F=\Delta^{*}p_{1}^{*}F=\Delta^{*}p_{2}^{*}F$. If we prove that the
natural map
\[
\Hom_{K'}(G_{K'},H_{K'})\overset{\Delta^{*}}{\rightarrow}\Hom_{K}(G_{K},H_{K})
\]
is injective, we would be done. As $K/k$ is primary and finitely generated, it suffices to treat the following two cases.
\begin{casenv}
\item Suppose $K/k$ is a finite, purely inseparable extension of characteristic $p$. Then $K'$ is an Artin local ring. Moreover, the ideal $I=\text{ker}(\Delta)$ is nilpotent. As $K'$ is a ring which is killed by $p$ and $I$ is nilpotent, the desired injectivity follows from a lemma of Drinfeld
\cite[Lemma 1.1.3(2)]{katz1981serre} in his proof of the Serre-Tate theorem. Alternatively, it directly follows from \cite[1.4.4.3]{chai2014complex}.
\item Suppose $K/k$ is a finitely generated regular extension. Then $K'$ is a noetherian integral domain. The desired injectivity follows from localizing and again applying \cite[1.4.4.3]{chai2014complex}.
\end{casenv}
\end{proof}

We have the following useful corollary.
\begin{cor}\label{corollary:quasi-polarizations_extend}Let $X/k$ be a normal geometrically connected scheme of finite type. Let $G_0$ be a BT group over $k$ and set $G_{0,X}$ to be the pullback to $X$. Then the following pullback map is an isomorphism:
$$QPol(G_0)\rightarrow QPol(G_{0,X}).$$

\end{cor}
\begin{proof}
First of all, recall that a quasi-polarization is simply an anti-symmetric isogeny. As $X$ is normal, it follows that $X/k$ is geometrically irreducible. Set $K$ to the the function field $k(X)$ and $G_{0,K}$ to be the pullback of $G_0$ to $K$. As $X/k$ is geometrically irreducible and of finite type, $K/k$ is a finitely generated primary extension. By Proposition \ref{Proposition:homomorphism_BT_primary_extension}, it follows that $\text{Hom}_k(G_0,G_0^t)\rightarrow \text{Hom}_K(G_{0,K},G_{0,K}^t)$ is an isomorphism. Therefore the restriction map $QPol(G_0)\rightarrow QPol(G_{0,K})$ is an isomorphism.

There is a restriction map $QPol(G_{0,X})\rightarrow QPol(G_{0,K})$ and it suffices to prove that this map is an isomorphism. As $X$ is noetherian and normal, this follows from Theorem \ref{Theorem:extending_BT_dJ}.

\end{proof}

We now seek to prove a $p$-adic variant of \cite[Proposition 4.4]{grothendieck1966theoreme}: Lemma \ref{Lemma:descend_AV_via_BT}.  We first briefly recall the properties of the Chow $K/k$ trace for a primary extension of fields.
\begin{defn}Let $K/k$ be a finitely generated primary extension of fields and let $A/K$ be an abelian variety over $K$. The \emph{Chow $K/k$ trace}, denoted $\text{Tr}_{K/k}(A)$ is the final object in the category of pairs $(B,f)$ where $B/k$ is an abelian variety over $k$ equipped with a $K$-map $f\colon B_K\rightarrow A$.
\end{defn} 
We have the following properties of the Chow trace, which follow from \cite[Theorem 6.2, Theorem 6.4(2)]{conrad2006chow}: 
\begin{enumerate}
\item it exists and is functorial;
\item if $A$ is the base change of an abelian variety $B/k$, then $\text{Tr}_{K/k}(A)\cong B$;
\item the kernel of the universal map $f_{\text{univ}}\colon (\text{Tr}_{K/k}(A))_K\rightarrow A$ is finite.
\end{enumerate}

\begin{prop}
\label{Proposition:descent_AV_via_BT_purely_inseparable}Let $K/k$ be a finite, purely inseparable field extension. Let $A/K$ be an abelian variety such that $A[p^{\infty}]$ is the pullback of a BT group $G_{0}$ over $k$. Then $A$ is pulled back from an abelian variety $B_0$ over $k$.
\end{prop}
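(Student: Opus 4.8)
The plan is to manufacture an fppf descent datum on the abelian scheme $A/K$ along $\text{Spec}\,K\to\text{Spec}\,k$ out of the given descent datum on $A[p^{\infty}]$, using the Serre--Tate theorem, and then to invoke effectivity of fppf descent. Since $K/k$ is finite and purely inseparable, $\text{Spec}\,K\to\text{Spec}\,k$ is faithfully flat of finite presentation, and $K':=K\otimes_{k}K$ and $K'':=K\otimes_{k}K\otimes_{k}K$ are Artinian local $\mathbb{F}_{p}$-algebras with residue field $K$ (via the multiplication maps $\Delta$), with nilpotent maximal ideal, and with $p=0$; thus $\text{Spec}\,K\hookrightarrow\text{Spec}\,K'$ and $\text{Spec}\,K\hookrightarrow\text{Spec}\,K''$ are nilpotent thickenings whose defining ideals are killed by $p$. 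There is exactly one $k$-algebra structure on $K'$, so for the two projections $p_{1},p_{2}\colon\text{Spec}\,K'\to\text{Spec}\,K$ one has tautological identifications $p_{1}^{*}(G_{0,K})\cong G_{0}\otimes_{k}K'\cong p_{2}^{*}(G_{0,K})$. Fixing an isomorphism $\varphi\colon A[p^{\infty}]\cong G_{0,K}$, these produce an isomorphism $\theta\colon p_{1}^{*}(A[p^{\infty}])\to p_{2}^{*}(A[p^{\infty}])$ of $p$-divisible groups over $K'$ which restricts to $\text{id}_{A[p^{\infty}]}$ over the diagonal $\text{Spec}\,K$ and satisfies the cocycle identity over $K''$; these last two assertions merely record that $G_{0,K}$, being pulled back from $k$, carries its canonical descent datum.

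Now apply the Serre--Tate theorem \cite[1.4.2.3]{chai2014complex} to the thickening $\text{Spec}\,K\hookrightarrow\text{Spec}\,K'$: the functor sending an abelian scheme over $K'$ to the triple consisting of its restriction to $\text{Spec}\,K$, its $p$-divisible group over $K'$, and the canonical comparison isomorphism is an equivalence of categories. The abelian schemes $p_{1}^{*}A$ and $p_{2}^{*}A$ over $K'$ both restrict to $A$ over the diagonal, and the pair $(\text{id}_{A},\theta)$ is an isomorphism of the corresponding triples (the compatibility with the comparison isomorphisms holds precisely because $\theta$ restricts to the identity over the diagonal); hence it is induced by a unique isomorphism of abelian schemes $\Phi\colon p_{1}^{*}A\to p_{2}^{*}A$ over $K'$, which moreover restricts to $\text{id}_{A}$ over the diagonal and induces $\theta$ on $p$-divisible groups. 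To verify the cocycle condition, consider $p_{13}^{*}\Phi$ and $p_{23}^{*}\Phi\circ p_{12}^{*}\Phi$, two isomorphisms $p_{1}^{*}A\to p_{3}^{*}A$ over $K''$. By full faithfulness of the Serre--Tate functor for $\text{Spec}\,K\hookrightarrow\text{Spec}\,K''$ --- equivalently, by the rigidity lemma \cite[Lemma 1.1.3(2)]{katz1981serre} --- it suffices to compare their restrictions to the diagonal and their effect on $p$-divisible groups. Over the diagonal each $p_{ij}^{*}\Phi$ restricts to $\text{id}_{A}$, so both composites restrict to $\text{id}_{A}$; on $p$-divisible groups both equal $p_{13}^{*}\theta=p_{23}^{*}\theta\circ p_{12}^{*}\theta$ by the cocycle property of $\theta$. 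Hence $\Phi$ is an fppf descent datum for the abelian scheme $A$ relative to $\text{Spec}\,K\to\text{Spec}\,k$.

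Finally, fppf descent is effective for algebraic spaces, so $\Phi$ descends $A$ to a group algebraic space $B_{0}\to\text{Spec}\,k$ with $B_{0,K}\cong A$; properness, smoothness, and geometric connectedness all descend along faithfully flat maps, so $B_{0}$ is a proper, smooth, geometrically connected group algebraic space over the field $k$. Any such object is an abelian variety: a group algebraic space of finite type over a field is a scheme, and a proper, smooth, geometrically connected group scheme over a field is an abelian variety. Thus $A\cong B_{0,K}$ is pulled back from the abelian variety $B_{0}/k$, as desired. The one delicate point in this argument is arranging the Serre--Tate input so that the cocycle identity over the non-reduced rings $K'$ and $K''$ is checked on the diagonal $\text{Spec}\,K$ alone; this rests on the rigidity of homomorphisms of abelian schemes along $p$-nilpotent thickenings, after which everything is formal.
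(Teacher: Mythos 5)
Your proof is correct, but it takes a genuinely different route from the paper's. The paper uses the Chow $K/k$-trace: setting $B=\mathrm{Tr}_{K/k}(A)$, Conrad's theorem shows the trace map $B_{K}\rightarrow A$ is an isogeny with connected, hence infinitesimal $p$-power, kernel $I$ (this is where pure inseparability enters, via $(K\otimes_{k}K)_{\mathrm{red}}\cong K$); the induced isogeny $B[p^{\infty}]_{K}\rightarrow G_{0,K}$ descends to $k$ by Proposition \ref{Proposition:homomorphism_BT_primary_extension}, its kernel $H$ descends $I$, and $(B/H)_{K}\rightarrow A$ is an isomorphism. You instead build an honest fppf descent datum on $A$ itself: since $K\otimes_{k}K$ and $K\otimes_{k}K\otimes_{k}K$ are Artinian local with nilpotent augmentation ideal and $p=0$, the Serre--Tate equivalence (for which \cite[Theorem 1.2.1]{katz1981serre} is the cleanest reference) lifts the canonical descent datum on $G_{0,K}\cong A[p^{\infty}]$ to an isomorphism $\Phi\colon p_{1}^{*}A\rightarrow p_{2}^{*}A$ restricting to $\mathrm{id}_{A}$ on the diagonal; the cocycle identity follows by rigidity along the $p$-nilpotent thickening (in fact injectivity of reduction for homomorphisms of abelian schemes modulo a nilpotent ideal makes the comparison of BT-group maps over the triple product superfluous, though checking it does no harm); and effectivity of fppf descent (via algebraic spaces, or via descent of quasi-projective schemes along finite locally free covers) yields the abelian variety $B_{0}/k$. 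Your argument is self-contained given Serre--Tate and descent theory, and it mirrors the mechanism the paper itself uses to prove Proposition \ref{Proposition:homomorphism_BT_primary_extension} (the diagonal trick plus Drinfeld rigidity over $K\otimes_{k}K$), applied directly to the abelian scheme rather than only to homomorphisms; the paper's route is shorter because it reuses that proposition together with Conrad's trace machinery, at the cost of invoking the Chow trace and its behaviour under purely inseparable base change.
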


\begin{proof}
Set $B_0:=\text{Tr}_{K/k}(A)$. By definition, there is a natural map $B_{0,K}\rightarrow A$ and by \cite[Theorem 6.6]{conrad2006chow} this map is an isogeny (as $(K\otimes_{k}K)_{red}\cong K$, see the Terminology and Notations of \emph{ibid.}) with connected Cartier dual; in particular the kernel $I$ has order a power of $p$. The induced isogeny of BT groups $B_{0,K}[p^{\infty}]\rightarrow G_{0,K}\cong A[p^{\infty}]$ also has kernel exactly $I$. Applying Proposition \ref{Proposition:homomorphism_BT_primary_extension}, we see that this isogeny descends to an isogeny $B_{0,K}[p^{\infty}]\rightarrow G_{0}$ over $k$. Call the kernel of this isogeny $H$; then $H$ is a finite $k$-group scheme with $H_{K}\cong I$. Finally, $B_0/H$ is an abelian variety over $k$ and the induced map
\[
(B_0/H)_{K}\rightarrow A
\]
is an isomorphism. (Indeed, as a byproduct of the proof we deduce that $H$ is trivial.)
\end{proof}

With Proposition \ref{Proposition:descent_AV_via_BT_purely_inseparable} in hand, we are ready to prove the following important lemma.
\begin{lem}
\label{Lemma:descend_AV_via_BT}Let $X/k$ be a normal, geometrically connected scheme of finite type over a field of characteristic $p$ and let
$\pi\colon A\rightarrow X$ be an abelian scheme. Then the following are equivalent 
\begin{enumerate}
\item The BT group $A[p^{\infty}]$ is isomorphic to the pullback of a BT group $G_{0}$ over $k$.
\item There abelian scheme $A$ is the pullback of an abelian scheme $B_0$ over $k$. 
\end{enumerate}
\end{lem}

\begin{proof}
That (2) implies (1) is evident, so we prove the reverse implication. We will explain a series of reductions. First of all, as $X/k$ is normal and geometrically connected, it is geometrically integral. We claim that that it is equivalent to prove the lemma over the generic point $\eta=\Spec(k(X))$. Indeed, suppose there exists an abelian variety $B_0/k$ such that $B_{0,\eta}\cong A_{\eta}$. Then $B_{0,X}\cong A$ by \cite[Ch. I., Proposition 2.7]{faltings2013degeneration}.

The hypotheses on $X/k$ imply that $k(X)/k$ is a finitely generated primary extension. Note that if (2) holds, then $B_0\cong \text{Tr}_{k(X)/k}A_{\eta}$. Set $B_{0}=Tr_{k(X)/k}A_{\eta}$; then $B_{0}$ is an abelian variety over $k$ equipped with a homomorphism $(B_{0})_{\eta}\rightarrow A_{\eta}$ with finite kernel. We will use functorial properties of the Chow trace to reduce to the case when $k$ is algebraically closed.

Let $k'$ be the algebraic closure of $k$ in $k(X)$. Then $k'/k$ is a finite, purely inseparable extension. If we prove the theorem for the geometrically integral scheme $X/k'$, then by Proposition \ref{Proposition:descent_AV_via_BT_purely_inseparable} we will have proven the theorem for $X/k$. Therefore we reduce to the case that $X/k$ is geometrically integral.

We now explain why we may reduce to the case $k=\bar{k}$. As $X/k$ is now supposed to be geometrically integral, $k(X)/k$ is a regular field extension and \cite[Theorem 5.4]{conrad2006chow} implies that the construction of $k(X)/k$ trace commutes with \emph{arbitrary} field extensions $E/k$. On the other hand, the $k(X)$-morphism $B_{0,\eta}\rightarrow A_{\eta}$ is an isomorphism if and only if the its base-change to $\bar{k}(X)$ is an isomorphism.

Next we explain why we may reduce to the case that $A[l]\rightarrow X$ is isomorphic to a trivial \'etale cover (maintaining the assumption that $k=\bar{k}$). Let $Y\rightarrow X$ be the finite, connected Galois cover that trivializes $A[l]$. Suppose there exists an abelian variety $C_{0}/k$ such that $A_{Y}\cong C_{0,Y}$ as abelian schemes over $Y$. Let $I:=\textbf{Isom}(C_{0,X},A)$ be the $X$-scheme of isomorphisms between $C_{0,X}$ and $A$. Our assumptions that $I(Y)$ is non-empty and that there is an isomorphism $A[p^{\infty}]\cong G_{0,X};$ our goal is to prove that $I(X)$ is non-empty.

Pick isomorphisms $C_{0}[p^{\infty}]\cong G_{0}$ and $A[p^{\infty}]\cong G_{0,X}$
once and for all. Then we have a commutative square of sets, where
the horizontal arrows are injective because the $p$-divisible group
of an abelian scheme is relatively schematically dense:
\[
\xymatrix{I(X)\ar[d]\ar[r] & \text{Aut}_{X}(G_{0,X})\ar[d]\\
I(Y)\ar[r] & \text{Aut}_{Y}(G_{0,Y}).
}
\]

By Proposition \ref{Proposition:homomorphism_BT_primary_extension}, the maps $\Aut_{k}(G_{0})\rightarrow \Aut_{X}(G_{0,X})\rightarrow \Aut_{Y}(G_{0,Y})$ are all isomorphisms: indeed, both $k(Y)/k$ and $k(X)/k$ are regular extensions. Let $H=Gal(Y/X)$. Then $H$ acts naturally on $G_{0,Y}$ and hence acts on $I(Y)\hookrightarrow Aut_{Y}(G_{0,Y})$ via conjugation. On the other hand, the action of $H$ on $Aut_{Y}(G_{0,Y})$ is trivial because the fixed points are $Aut_{X}(G_{0,X})$; therefore the action of $H$ on $I(Y)$ is trivial. Our running assumption was that we had an element of $I(Y)$. Then Galois descent implies that $I(X)$ is non-empty, as desired.

We can find a polarization by \cite[Remark 1.10(a)]{faltings2013degeneration}, so assume we have one of degree $d$. By the above reductions, we may assume that $k=\bar{k}$ and that there exists an $l\geq3$ such that $A[l]\rightarrow X$ is isomorphic to a trivial \'etale cover. Let $x\in X(k)$ be a $k$-point and let $C_{0}:=A_{x}$. Denote by $C:=(C_{0})_X$ the constant abelian scheme with fiber $C_{0}$ and polarization induced from that of $A_x$. By assumption the BT groups $A[p^{\infty}]$ and $C[p^{\infty}]$ are isomorphic. They are in fact isomorphic as \emph{quasi-polarized} BT groups by Corollary \ref{corollary:quasi-polarizations_extend}.

There are two induced moduli maps from $X$ to the separated (fine)
moduli scheme $\mathscr{A}_{g,d,l}$. Consider the restrictions of these moduli maps to the formal scheme $X_{/x}$: Serre-Tate theory \cite[Theorem 1.2.1]{katz1981serre} ensures that the two formal moduli maps from $X_{/x}$ to $\mathscr{A}_{g,d,l}$ agree. Then by Lemma \ref{Lemma:formal_maps_agree} the two moduli maps agree on all of $X$, whence the conclusion.
\end{proof}

We will use the following easy refinement.
\begin{cor}
\label{Corollary:constant_BT_level_polarization}Let $X/k$ be a normal, geometrically connected scheme of finite type over a field of characteristic $p$. Let $p\nmid N$ and $d$ be positive integers. Let $A\rightarrow X$ be an abelian scheme equipped with a polarization $\lambda$ of degree $d$ and a full level $N$ structure $\Gamma$. Suppose $A[p^{\infty}]$ is the pullback of a BT group $G_0$ over $k$. Then $(A,\lambda,\Gamma)$ is the pullback of a polarized abelian variety with full level-$N$ structure $(A_0,\lambda_0,\Gamma_0)$ over $k$.
%\begin{itemize}
%\item Abelian varieties $A_{0}/k$ equipped with full level $N$ structure
%and an isogeny $\lambda\colon A_{0}\rightarrow A_{0}^{t}$ of degree
%$d$
%\item Abelian schemes $A/X$ equipped with full level $N$ structure and an isogeny $\lambda\colon A\rightarrow A^{t}$ of degree $d$ such that $A[p^{\infty}]$ is isomorphic to the pullback of a BT group $G_{0}/k$
%\end{itemize}
\end{cor}

\begin{proof}
By Proposition \ref{Lemma:descend_AV_via_BT}, there exists an abelian variety $A_0$ over $k$ whose pullback to $X$ is isomorphic to $A$. Let $K:=k(X)$ and note again that $K/k$ is a primary field extension. The group scheme $A_0[N]$ is \'etale, and a non-trivial \'etale group scheme cannot become trivial after a primary field extension; therefore the full level-$N$ structure descends. We need to show the polarization descends to a polarization on $A_0$. A theorem of Chow \cite[Theorem 3.19]{conrad2006chow} implies that any homomorphism $A_K\rightarrow A_K^t$ is the pullback of a unique homomorphism $\lambda_0\colon A_0\rightarrow A_0^t$. There exists an ample line bundle $\calL$ on $A_{\bar K}$ such that $\lambda$ comes from $\calL$ in the usual way. On the other hand, the N\'eron-Severi group of $A_{0,\bar k}$ and the N\'eron-Severi group of $A_{\bar K}$ are isomorphic; hence one can find such a line bundle $\calL$ (in the same N\'eron-Severi class) that is defined over $\bar k$. This implies that $\lambda_0$, as a symmetric isogeny, is in fact a polarization as in \cite[Ch. I, Definition 1.6]{faltings2013degeneration}.
\end{proof}

Before we prove the main result of this section, we need one more auxiliary result which provides an easy-to-check criterion that characterizes when a map from an open dense subset $U$ of a noetherian integral normal scheme $X$ extends to all of $X$. The proposition itself is an easy corollary of Zariski's main theorem.

\begin{prop}\label{Proposition:extend_map_on_open}Let $X$ be a noetherian integral normal scheme and let $\mathcal{M}$ be a reduced and separated scheme. Let $U\subset X$ be a non-empty open subset and let $f_U\colon U\rightarrow \mathcal{M}$ be a morphism. Let $X'\subset X\times \mathcal{M}$ be the Zariski closure of the graph $\Gamma_U$ of $f_U$, equipped with induced reduced structure, and suppose $X'\rightarrow X$ is proper. For each $x\in X$, let $X'_x$ denote the fiber of $X'$ over $x$. Then the following are equivalent.
\begin{enumerate}
\item There exists a (necessarily unique) extension $f\colon X\rightarrow \mathcal{M}$ of $f$.
\item For each $x\in X\backslash U$, there exists a reduced scheme $\tilde{Y}$ together with a finite, dominant morphism $\tilde{Y}\rightarrow (X_x')_{\text{red}}$ such that for each irreducible component $\tilde{Y}_j$ of $\tilde{Y}$, the composed map $$\tilde{Y}_j\rightarrow (X_x')_{\text{red}}\hookrightarrow X'\rightarrow \mathcal{M}$$
has image supported at a single point.
\end{enumerate}
\end{prop}
\begin{proof}
As $\pi\colon X'\rightarrow X$ is proper and birational, it follows that $X'$ is a noetherian, integral scheme. Then, as $X$ is normal, it follows from Zariski's main theorem that for every $x\in X$, the fiber $X_x$ is geometrically connected. For $x\in X\backslash U$, let $k:=k(x)$. Then $(X'_x)_{\text{red}}$ is proper over $\Spec(k)$. As $\mathcal M$ is separated, the image of $(X'_x)_{\text{red}}\hookrightarrow \mathcal M$ is closed.

Zariski's main theorem implies that the following three statements are equivalent.
\begin{itemize}
\item The map $\pi$ is an isomorphism.
\item The image of $(X'_x)_{\text{red}}\rightarrow \mathcal{M}$ is a single point for all $x\in X\backslash U$.
\item The (automatically closed) image of  $(X'_x)_{\text{red}}\rightarrow \mathcal{M}$ is zero-dimensional for all $x\in X\backslash U$.
\end{itemize} 
Now, (1) clearly implies (2). To prove the converse, note that each $\tilde{Y}_i$ is proper over $\Spec(k)$; therefore the image of $\tilde{Y}_i\rightarrow (X'_x)\hookrightarrow \mathcal{M}$ is closed. Therefore, if (2) holds, then $\pi$ is an isomorphism as desired.
\end{proof}

\begin{cor}
\label{Corollary:grothendieck}Let $X$ be a locally noetherian normal scheme and $U\subset X$ be an open dense subset whose complement has characteristic $p$. Let $A_{U}\rightarrow U$ be an abelian scheme. Then $A_{U}$ extends to an abelian scheme over $X$ if and only if $A_{U}[p^{\infty}]$ extends to a $p$-divisible group over $X$.
\end{cor}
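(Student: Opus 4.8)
The plan is to reduce to the already-established local and geometric statements. One direction is trivial: if $A_U$ extends to an abelian scheme $A_X\to X$, then $A_X[p^\infty]$ is a Barsotti-Tate group on $X$ restricting to $A_U[p^\infty]$, so $A_U[p^\infty]$ extends. For the interesting direction, suppose $\mathcal{G}\to X$ is a Barsotti-Tate group with $\mathcal{G}|_U \cong A_U[p^\infty]$. Since $X$ is locally noetherian normal, it is a disjoint union of integral normal schemes, so I may assume $X$ is integral with generic point $\eta$; note $\eta \in U$ since $U$ is dense. The question of whether $A_U$ extends is local on $X$ (an abelian scheme is the unique such extension by the valuative criterion plus \cite[Proposition 1.2]{grothendieck1966theoreme} / properness of $\mathscr{A}_{g,d,N}$, so extensions glue), hence I may assume $X = \mathrm{Spec}(R)$ with $R$ a noetherian normal local domain.

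First I would handle the case where $R$ is a discrete valuation ring. Here $U = \mathrm{Spec}(K)$ with $K = \mathrm{Frac}(R)$, and if the closed point has residue characteristic $p$ then the generic fiber $\mathrm{Spec}(K)$ could have characteristic $0$ or $p$; either way, the hypothesis is exactly that $A_K[p^\infty]$ extends to a BT group over $R$, and the conclusion that $A_K$ itself extends to an abelian scheme over $R$ is the content of the de Jong–Tate extension results used in Theorem \ref{Theorem:extending_BT_dJ} — more precisely, this is de Jong's theorem \cite[2.5]{de1998homomorphisms} in the equicharacteristic-$p$ case and Tate's theorem in the mixed-characteristic case (the Néron model of $A_K$ is an abelian scheme precisely when its $p$-divisible group extends, by the criterion of Néron–Ogg–Shafarevich together with the BT extension). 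This settles the $\dim R = 1$ case.

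Next, for general $R$ of dimension $\geq 2$, I would mimic Grothendieck's descent argument from \cite[Section 4]{grothendieck1966theoreme}. Since $R = \bigcap_{\mathrm{ht}(\mathfrak{p})=1} R_{\mathfrak{p}}$ and every height-one localization is a DVR dominated by a point of $U$ (complements of codimension-one points lie in $U$ after shrinking, or one argues directly that $U \supset X^{(1)}$ is not needed — rather one extends over each DVR by the previous step), the abelian scheme $A_U$ extends over the complement of a closed set of codimension $\geq 2$. Now invoke the purity-type mechanism: the extended BT group $\mathcal{G}$ on all of $X$, together with Serre–Tate theory, lets one algebraize. Concretely, pick a point $x$ in the complement; by the DVR case $A_U$ extends over $\mathrm{Spec}(\mathcal{O}_{X,\mathfrak{p}})$ for all height-one $\mathfrak{p}$, and one uses Weil's extension theorem for rational maps into group schemes (an abelian scheme over the complement of codimension $\geq 2$ in a normal base extends if it extends in codimension one — this is \cite[Corollaire 4.5]{grothendieck1966theoreme} in the form recalled in the third Remark of the introduction, valid over any normal base once the rational map is defined in codimension one, since the target is proper). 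The only subtlety is that this last extension principle requires knowing the map to the moduli space extends across codimension-two points, which is where the BT group $\mathcal{G}$ on $X$ enters: it pins down the formal completion of the map at $x$ via Serre–Tate, and then Lemma \ref{Lemma:formal_maps_agree} forces the algebraic extension.

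I expect the main obstacle to be the bookkeeping around imperfect residue fields and the passage from "extends in codimension one" to "extends everywhere" when $\dim X \geq 2$: one must be careful that the extension over the punctured local scheme really glues to a global abelian scheme, which requires the moduli-stack formulation (working with $\mathscr{A}_{g,d,N}$ and a level structure, as in Corollary \ref{Corollary:constant_BT_level_polarization}) rather than with abelian schemes directly. The polarization exists by \cite[Remark 1.10(a)]{faltings2013degeneration} since $X$ is normal noetherian, and a level structure can be introduced after an étale cover and descended, so this reduces the whole problem to the extension of a morphism $U \to \mathscr{A}_{g,d,N}$ across a codimension-$\geq 2$ locus, with the BT group providing the needed control along that locus.
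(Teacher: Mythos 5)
Your reduction to the local case and your handling of the one-dimensional case agree with the paper (N\'eron--Ogg--Shafarevich/Tate in mixed characteristic, de Jong \cite[2.5]{de1998homomorphisms} in equal characteristic $p$), and the trivial direction is fine. The gap is in the passage from ``extends in codimension one'' to ``extends everywhere''. You invoke a Weil/purity-type extension principle --- that an abelian scheme over the complement of a codimension-$\geq 2$ closed set in a normal base extends once it extends in codimension one, ``since the target is proper'' --- but $\mathscr{A}_{g,d,l}$ is not proper, and, more importantly, this extension principle is exactly the statement that fails in characteristic $p$: Grothendieck's Corollaire 4.5 is not available here, and \cite[Remarques 4.6]{grothendieck1966theoreme} (quoted in the introduction of this paper) gives counterexamples with families of supersingular abelian surfaces over $\mathbb{A}^{2}\backslash\{(0,0)\}$. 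So the codimension-two extension cannot be quoted; it is precisely what must be proved, and it is where the globally extended BT group has to do real work.

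Your proposed fix --- that $\mathcal{G}$ ``pins down the formal completion of the map at $x$ via Serre--Tate, and then Lemma \ref{Lemma:formal_maps_agree} forces the algebraic extension'' --- does not supply the missing argument. Serre--Tate theory deforms an abelian scheme along nilpotent thickenings only once one already has an abelian variety over the closed fiber whose $p$-divisible group matches $\mathcal{G}_{x}$; at a codimension-$\geq 2$ point no such abelian variety is given (producing it is the whole problem), and Lemma \ref{Lemma:formal_maps_agree} yields only uniqueness of an extension, not existence. The paper's actual mechanism is different: one forms the closure $X'$ of the graph of $U\rightarrow\mathscr{A}_{g,d,l}$, uses the DVR case plus the valuative criterion to see that $X'\rightarrow X$ is proper, and then shows that each fiber of $X'\rightarrow X$ is contracted by the moduli map. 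That contraction is the heart of the proof and rests on the $p$-adic analogue of Grothendieck's Proposition 4.4, namely Lemma \ref{Lemma:descend_AV_via_BT} and Corollary \ref{Corollary:constant_BT_level_polarization}: an abelian scheme over a normal, geometrically connected, finite-type base whose BT group is pulled back from the ground field is itself pulled back --- proved via the Chow trace, descent of homomorphisms of BT groups along primary extensions (Proposition \ref{Proposition:homomorphism_BT_primary_extension}), Serre--Tate, and Lemma \ref{Lemma:formal_maps_agree} --- applied to normalizations of components of the fibers, where the pullback of $\mathcal{G}$ is constant; additional care is needed for imperfect residue fields and purely inseparable extensions (Proposition \ref{Proposition:descent_AV_via_BT_purely_inseparable}). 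Without this fiberwise-constancy step, or a genuine substitute for it, your outline does not close.
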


\begin{proof}
When $X$ is the spectrum of a discrete valuation ring, this follows from Grothendieck's generalization of N\'eron-Ogg-Shafarevitch criterion in mixed characteristic \cite[Expos\'e IX, Theorem 5.10]{grothendieck2006groupes}
and work of de Jong \cite[2.5]{de1998homomorphisms} in equal characteristic $p$. The rest of the argument closely follows \cite[pages 73-76]{grothendieck1966theoreme}.

We review the reductions on \cite[page 73]{grothendieck1966theoreme}. First of all, if such an extension exists, it is unique up to unique isomorphism by \cite[1.2]{grothendieck1966theoreme}. Therefore, to prove the existence of such an extension, it is harmless to replace $X$ by a finite \'etale cover by descent theory. As $X$ is locally noetherian and normal, every connected component of $X$ is irreducible \cite[Tag 0357]{stacks-project}. Therefore, we may suppose that $X$ is integral, $A_{U}\rightarrow U$ has a polarization of degree $d$, and the $l$-torsion is of $A_U$ isomorphic to the trivial \'etale cover of $U$ for some prime $l\geq 3$. In particular, we have a map $f\colon U\rightarrow\mathscr{A}_{g,d,l}$, where the latter fine moduli space is a separated scheme of finite type over $\mathbb{Z}[1/l]$.

Let $X'\subset X\times \mathscr{A}_{g,d,l}$ denote the closure of the graph of $f$, equipped with reduced, induced scheme structure. Then $X'$ is integral and the natural map $X'\rightarrow X$ is of finite type. The valuative criterion of properness together with the case of discrete valuation rings discussed above imply that the morphism $\pi\colon X'\rightarrow X$ is proper and birational. Our goal is to show that $U\rightarrow \mathscr{A}_{g,d,l}$ extends to a morphism $X\rightarrow \mathscr{A}_{g,d,l}$. We will use Proposition \ref{Proposition:extend_map_on_open}; the key is to find an appropriate $\tilde{Y}$, the process of which will pass through a scheme that is not \`a priori noetherian.

Let $X''\rightarrow X$ denote the normalization of $X'$. It is \emph{not \`a priori true} that $X''\rightarrow X$ is finite (and $X''$ need not be noetherian). Nonetheless, as $X'$ is integral and noetherian, a theorem of Mori-Nagata implies that the fibers of $X''\rightarrow X'$ have \emph{finite reduction} in the following sense: if $F$ is the fiber over a point $x'$ of $X'$, then $F_{\text{red}}\rightarrow x'$ is finite \cite[Chapitre 0, 23.2.6, 23.2.7]{grothendieckegaIV_0}. Moreover, $X''$ is automatically an integral Krull scheme \cite[Chapitre 0, 23.2.8(iii)]{grothendieckegaIV_0}. Let $B_{X'}$ and $B_{X''}$ be the abelian schemes on $X'$ and $X''$ respectively, induced from the map $X''\rightarrow X'\rightarrow\mathscr{A}_{g,d,l}$.

For every point $x\in X\backslash U$,  set $Z:=(X'_x)_{\text{red}}$, set $Y:=(X''_x)_{\text{red}}$, and set $k:=k(x)$; by assumption $\text{char}(k)=p$ and by Zariski's main theorem, $Z/k$ is geometrically connected of finite type. By the above result of Mori-Nagata, $Y/k$ is also of finite type.
%For every point $x\in X\backslash U$, set $Z:=(X'_x)_{\text{red}}$, set $Y:=(X''_x)_{\text{red}}$, and set $k:=k(x)$; by assumption $\text{char}(k)=p$ and $Z/k$ is of finite type.  Moreover, by Zariski's principle of connectedness, $Z$ is geometrically connected. The above result of Mori-Nagata, $Y/k$ is also of finite type. To prove the above bijectivity claim, it is equivalent to show that for every point $x$ of $X\backslash U$, the map $Z\rightarrow\mathscr{A}_{g,d,l}$ factors through $x=\text{Spec}(k)$:
%\[
%Z\rightarrow x\rightarrow\mathscr{A}_{g,d,l}
%\]
Let $G$ be the BT group that exists by assumption on $X$ and let $G_{X''}$ be the pullback of $G$ to $X''$. Then $B_{X''}[p^{\infty}]$ and $G_{X''}$ are isomorphic on $U\subset X''$. As $X''$ is an integral Krull scheme, Theorem \ref{Theorem:extending_BT_dJ} implies that $B_{X''}[p^{\infty}]\cong G_{X''}$ \emph{on all of $X''$}. Therefore for every point $x$ of $X$, the abelian scheme $B_{Y}$ has constant BT group over $Y$ with respect to $k$. (We emphasize that we do not yet know that $B_{X'}[p^{\infty}]$ and $G_{X'}$ are isomorphic because the noetherian scheme $X'$ is not necessarily normal and hence not necessarily Krull; hence we do not yet know that $B_Z$ has constant BT group over $Z$ with respect to $k$.)

Write $Z=\cup Z_i$ to be the decomposition of $Z$ into irreducible components. For each $i$, set $z_i$ to be the generic point of $Z_i$. Let $y_i$ be a point of $Y$ that maps to $z_i$; note that $k(y_i)$ is a finite field extension of $k(z_i)$. Let $Y_i$ be the Zariski closure of $y_i$ in $Y$ with reduced induced structure scheme structure ($Y_i$ is not necessarily normal). Let $\tilde{Y}_i$ be the normalization of $Y_i$. As $Y_i\rightarrow Z$ is of finite type and $Z/k$ is of finite type over $k$, it follows from a theorem of E. Noether that $\tilde{Y}_i\rightarrow Z_i$ is finite  \cite[Corollary 13.13]{eisenbud2013commutative}. Set $\tilde{Y}:=\sqcup_i \tilde{Y}_i$; then $\tilde{Y}\rightarrow Z$ is finite and dominant. 

Consider the map $\tilde{Y}\rightarrow Z\rightarrow \mathscr{A}_{g,d,l}$; for each $i$ the induced abelian scheme $B_{\tilde{Y}_i}\rightarrow \tilde{Y}_i$ has constant BT group on $\tilde{Y}_i$ (with respect to $k$) because the same is true over $Y$. For each $i$, let $\tilde{k}_i$ be the algebraic closure of $k$ in $k(y_i)$. (The extension $\tilde{k}_i/k$ is a finite extension.) Then $\tilde{Y}_i/\tilde{k}_i$ is a normal, geometrically connected scheme of finite type. We claim that the composite map $$\tilde{Y}_i\rightarrow Z_i\rightarrow X'\rightarrow \mathscr{A}_{g,d,l}$$ factors through the map $\tilde{Y}_i\rightarrow \Spec(\tilde{k}_i)$ and in particular has closed image supported at a single point; indeed, this follows immediately from Corollary \ref{Corollary:constant_BT_level_polarization} and the fact that $\mathscr{A}_{g,d,l}$ is a fine moduli space.
%
%By the above discussion, the abelian scheme $B''|_{\tilde{Y}_i}$ has constant Barsotti-Tate group with respect to $k$. For each $i$, let $\tilde{k}_i$ be the algebraic closure of $k$ in $k(y_i)$. (The extension $\tilde{k}_i/k$ is a finite extension.) Then $\tilde{Y}_i/\tilde{k}_i$ is a normal, geometrically connected scheme of finite type. We claim that the composite map $\tilde{Y}_i\rightarrow Z_i\rightarrow X'\rightarrow \mathscr{A}_{g,d,l}$ has image supported at a single point. However, we have already shown that the associated abelian scheme, $B''|{\tilde{Y}_i}$, has constant BT group with respect to $k$ and hence with respect to $\tilde{k}_i$.  We may therefore directly apply Corollary \ref{Corollary:constant_BT_level_polarization}.

For each $x\in X\backslash U$, the scheme $\tilde{Y}\rightarrow Z$ satisfies the conditions of Proposition \ref{Proposition:extend_map_on_open}; by applying the Proposition, we conclude.

\end{proof}

\section{Lefschetz Theorems I}\label{Section:Lefschetz I}
In this section, we collect together several Lefschetz theorems having to do with morphisms of abelian varieties and $p$-divisible groups.
We first state the simplest case of a theorem of a Lefschetz theorem of Abe-Esnault, which will be useful for us to prove Lemma \ref{Lemma:lefschetz_hom_BT}, a Lefschetz theorem for homomorphisms of BT groups.
\begin{thm}
\label{Theorem:AE_Lefschetz}(Abe-Esnault) Let $X/k$ be a smooth projective variety over a perfect field $k$ of characteristic $p$ with $\dim X\geq2$ and let $C\subset X$ be a smooth projective curve that is the complete intersection of smooth ample divisors. Then for any $\mathcal{E}\in\fisoc{X}_{\Qpbar}$, the following restriction map is an isomorphism
\[
H^{0}(X,\mathcal{E})\rightarrow H^{0}(C,\mathcal{E})
\]
\end{thm}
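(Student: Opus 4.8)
The plan is to run the classical weak--Lefschetz argument in a $p$-adic guise: excision, Poincar\'e duality, and a $p$-adic Artin vanishing theorem for the affine complement. Write $n=\dim X\geq 2$ and $U=X\setminus D$, a smooth \emph{affine} variety of dimension $n$, with closed complement $D$. Since $X$ is smooth and projective, Kedlaya's equivalence $\textbf{F-Isoc}^{\dagger}(X)\xrightarrow{\ \sim\ }\textbf{F-Isoc}(X)$ shows that $\mathcal{E}$ underlies a (unique) overconvergent $F$-isocrystal on $X$; restricting along the open immersion $U\hookrightarrow X$, the object $\mathcal{E}|_{U}$ is then an overconvergent $F$-isocrystal on $U$ (overconvergent along $D$), and $\mathcal{E}|_{D}$ is one on the smooth projective $D$. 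This overconvergence of $\mathcal{E}|_{U}$ is exactly what makes the tools below (compactly supported rigid cohomology, Poincar\'e duality, finiteness) applicable; note also that it suffices to treat the underlying isocrystal cohomology $H^{0}=H^{0}_{\mathrm{rig}}$, the Frobenius-fixed part following by taking $F$-invariants of an $F$-equivariant isomorphism, and that one may replace $\overline{\mathbb{Q}}_{p}$ by a finite subextension harmlessly.

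First I would invoke the excision triangle attached to the decomposition $X=D\sqcup U$ into a closed and an open piece; since $X$, hence also the closed subscheme $D$, is proper, it takes the form
\[
R\Gamma_{c}(U,\mathcal{E}|_{U})\longrightarrow R\Gamma_{\mathrm{rig}}(X,\mathcal{E})\longrightarrow R\Gamma_{\mathrm{rig}}(D,\mathcal{E}|_{D})\xrightarrow{\ +1\ }.
\]
The associated long exact sequence reads, in low degrees,
\[
H^{0}_{c}(U,\mathcal{E})\to H^{0}(X,\mathcal{E})\to H^{0}(D,\mathcal{E})\to H^{1}_{c}(U,\mathcal{E})\to H^{1}(X,\mathcal{E}),
\]
so it is enough to prove the vanishing $H^{0}_{c}(U,\mathcal{E}|_{U})=0$ and $H^{1}_{c}(U,\mathcal{E}|_{U})=0$: the first gives injectivity of the restriction, the second gives surjectivity.

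For this I would apply Poincar\'e duality for rigid cohomology with coefficients in an overconvergent $F$-isocrystal on the smooth $n$-dimensional $U$, which gives $H^{i}_{c}(U,\mathcal{E}|_{U})\cong H^{2n-i}_{\mathrm{rig}}(U,\mathcal{E}^{\vee}|_{U})^{\vee}(-n)$, together with the $p$-adic analogue of Artin's vanishing theorem: the rigid cohomology of a smooth affine variety of dimension $n$ with coefficients in an overconvergent $F$-isocrystal is concentrated in degrees $\leq n$. Since $n\geq 2$, both $2n$ and $2n-1$ exceed $n$, hence $H^{2n}_{\mathrm{rig}}(U,\mathcal{E}^{\vee}|_{U})=H^{2n-1}_{\mathrm{rig}}(U,\mathcal{E}^{\vee}|_{U})=0$, and therefore $H^{0}_{c}(U,\mathcal{E}|_{U})=H^{1}_{c}(U,\mathcal{E}|_{U})=0$. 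Feeding this back into the long exact sequence yields $H^{0}(X,\mathcal{E})\xrightarrow{\ \sim\ }H^{0}(D,\mathcal{E}|_{D})$, as desired.

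I expect the Artin-type vanishing on the affine $U$ to be the load-bearing ingredient and the main obstacle: for constant coefficients it is classical, but for an arbitrary overconvergent $F$-isocrystal it rests on Kedlaya's finiteness and duality theorems for rigid cohomology (and is the reason the Frobenius structure is imposed). It is also precisely where the hypothesis $\dim X\geq 2$ is genuinely used: for a curve one has $2n-1=n$, the argument breaks, and indeed $H^{0}$ of a curve does not agree with $H^{0}$ of a point. Everything else --- the excision triangle (Berthelot, Chiarellotto--Tsuzuki), the compatibility between the convergent and overconvergent pictures on the proper $X$, and Poincar\'e duality --- is formal once the coefficients are known to be overconvergent $F$-isocrystals. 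Finally, note that the statement is equivalent to the restriction functor $\textbf{F-Isoc}(X)_{\overline{\mathbb{Q}}_{p}}\to\textbf{F-Isoc}(D)_{\overline{\mathbb{Q}}_{p}}$ being fully faithful (apply the displayed isomorphism to internal $\mathcal{H}om$ objects), which is the form in which it is used.
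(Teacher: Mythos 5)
Your argument is correct in outline, but it is organized quite differently from the paper's own proof, which is essentially a deduction by citation: the paper notes that $\mathcal{E}$ is automatically overconvergent since $X$ is projective and then invokes \cite[Proposition 2.2, Corollary 2.4]{abe2016lefschetz}, adding only two remarks --- that those arguments need $k$ perfect rather than algebraically closed, and that the inductive argument there works for a smooth very ample divisor $D$ in place of a curve cut out by such divisors. You instead reconstitute a direct weak-Lefschetz proof: the compactly supported excision triangle for $X=D\sqcup U$ (with $H^{i}_{c}=H^{i}$ on the proper $X$ and $D$), Poincar\'e duality on the smooth $n$-dimensional $U$, and Kedlaya's $p$-adic Artin vanishing for the smooth affine complement $U=X\setminus D$, which kills $H^{0}_{c}(U,\mathcal{E}|_{U})$ and $H^{1}_{c}(U,\mathcal{E}|_{U})$ exactly because $n\geq2$; your bookkeeping reductions (descending from $\overline{\mathbb{Q}}_{p}$ to a finite extension, taking $F$-invariants of an $F$-equivariant isomorphism, and passing to internal homs to get full faithfulness of restriction, which is the form used in Lemma \ref{Lemma:lefschetz_hom_BT}) are all sound. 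What your route buys is transparency: one sees precisely where very-ampleness (affineness of $U$), the hypothesis $\dim X\geq2$, and the Frobenius structure (needed for the finiteness, duality and vanishing theorems) enter, and the perfect-field subtlety evaporates because those theorems hold over an arbitrary field of characteristic $p$. What it costs is that the excision triangle, duality with coefficients, and the Artin-type vanishing for overconvergent $F$-isocrystals must themselves be taken as known inputs --- the same rigid-cohomology machinery that \cite{abe2016lefschetz} packages --- whereas the paper outsources all of that, including the passage from curves to divisors, to the cited reference.
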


\begin{proof}
Note that $\mathcal{E}$ is automatically overconvergent as $X$ is projective. This then follows immediately from the arguments of \cite[Corollary 2.4]{abe2016lefschetz}.
We make one further comment. While the statements \cite[Corollary 2.4, Proposition 2.2]{abe2016lefschetz}
are only stated for $\mathbb{F}$, they only require $k$ be perfect as stated in \cite[Remark 2.5]{abe2016lefschetz}. Indeed, Abe and Esnault pointed out via email that the part of \cite{abe2013theory} they cite for these arguments is Section 1, which only requires $k$ be perfect.

\end{proof}
\begin{lem}
\label{Lemma:lefschetz_hom_BT}Let $X/k$ be a smooth projective variety over a field $k$ of characteristic $p$ with $\dim X\geq2$ and let $U\subset S$ be a Zariski open subset. Let $\calG_U$ and $\calH_U$ be BT groups on $U$. Let $C\subset U$ be a smooth projective curve that is the complete intersection of smooth ample divisors of $X$ and denote by $\calG_{C}$ (resp. $\calH_C$) the restriction of $\calG_U$ (resp. $\calH_U$) to $C$. Then the following restriction map 
\begin{equation}\label{Eqn:restriction_hom_BT}
\Hom(\calG_{U},\calH_{U})\rightarrow\Hom_{C}(\calG{}_{C},\calH_{C})
\end{equation}
is injective with cokernel killed by a power of $p$.
\end{lem}

\begin{proof}
First of all, note that $U\subset X$ has complementary codimension at least two. Indeed, $U$ contains $C$, which is a projective curve that is the smooth complete intersection of ample divisors of $X$; therefore $C$ intersects every irreducible divisor of $X$ non-trivially. 

We now reduce to the case that $k$ is perfect. Let $k'/k$ be a
field extension and let $U':=U\times_{k}k'$. By Theorem \ref{Theorem:extending_BT_dJ},
the natural map 
\[
\Hom_{U'}(\calG_{U'},\calH_{U'})\rightarrow\Hom_{k'(U)}(\calG_{k'(U)},\calH_{k'(U)})
\]
is an isomorphism and similarly for $D$. Let $l/k$ be the perfect closure; it is in particular a primary extension. Therefore $l(U)/k(U)$ and $l(D)/k(D)$ are also primary extensions and hence Proposition \ref{Proposition:homomorphism_BT_primary_extension} implies both sides of the equation are unchanged when we replace $k$ by $l$. 

 Consider the following diagram: 
\[
\textbf{F-Isoc}^{\dagger}(X)\stackrel{\sim}{\rightarrow}\textbf{F-Isoc}(X)\rightarrow\textbf{F-Isoc}(U)
\]
The first arrow is an equivalence of categories because $X$ is proper and the second arrow an equivalence of categories by work of Kedlaya and Shiho \cite[Theorem 5.1(c)]{kedlaya2016notes} because the complement of $U$ has codimension at least 2. Hence $\mathbb{D}(\calG_{U})\otimes\QQ$ and $\mathbb{D}(\calH_{U})\otimes\QQ$ have canonical extensions to $X$, which we denote by $\calE$ and $\calF$ respectively. 

Both the left and the right hand side of Equation \ref{Eqn:restriction_hom_BT} are torsion-free $\ZZ_{p}$-modules. We have the following commutative diagram:
\[
\xymatrix{\Hom_{U}(\calG_{U},\calH_{U})\ar[rr]^{\sim}\ar[d]_{\text{res}} &  & \Hom_{U}(\mathbb{D}(\calH_{U}),\mathbb{D}(\calG_{U}))\ar[d]^{\otimes\QQ}\\
\Hom_{C}(\calG_{C},\calH_{C})\ar[d]_{\otimes\QQ} &  & \Hom_{U}(\mathbb{D}(\calH_{U})\otimes\QQ,\mathbb{D}(\mathcal{G}_{U})\otimes\QQ)\\
\Hom_{C}(\mathbb{D}(\calH_{C})\otimes\QQ,\mathbb{D}(\calG_{C})\otimes\QQ) &  & \Hom_{X}(\calF,\calE)\ar[ll]\ar[u]_{\wr}
}
\]
The lower right hand vertical arrow is an isomorphism,
again by the above equivalence of categories. Moreover we have the
isomorphisms
\[
\mathbb{D}(\calG_C)\otimes\QQ\cong\calE|_{C},\text{ and }\mathbb{D}(\calH_C)\otimes\QQ\cong\calF_C
\]
Our goal is to prove that $\text{res}$, an application of the ``restrict to $C$'' functor, is injective with cokernel killed by a power of
$p$. As $\Hom_{C}(\calG_C,\calH_C)$ is a finite free $\mathbb{Z}_{p}$-module, it is equivalent to prove
that the induced map
\[
\Hom_U(\calG_{U},\calH_{U})\otimes\QQ\rightarrow\Hom_C(\mathbb{D}(\calH_C)\otimes\QQ,\mathbb{D}(\calG_C)\otimes\QQ)
\]
is an isomorphism of $\Qp$-vector spaces. By chasing the diagram, this is equivalent to the bottom horizontal arrow being an
isomorphism. This arrow is an isomorphism due to Theorem \ref{Theorem:AE_Lefschetz} (which, as stated, requires $k$ to be perfect).
\end{proof}
In a similar vein, we write down a Lefschetz theorem for homomorphisms of abelian schemes.
\begin{thm}
\label{Theorem:Lefschetz_AV}Let $X/k$ be a smooth projective variety
over a field $k$ with $\dim X\geq2$ and let $U\subset X$ be a Zariski open subset. Let $A_{U}\rightarrow U$ and $B_{U}\rightarrow U$ be abelian schemes over $U$. Let $C\subset U$
be a smooth projective curve that is the complete intersection of smooth, ample divisors of $X$, and denote by $A_{C}$ (resp. $B_{C})$ the restriction of $A_{U}$ (resp. $B_{U}$) to $C$. Then the natural restriction map

\begin{equation}
\Hom_{U}(A_{U},B_{U})\rightarrow\Hom_{C}(A_{C},B_{C})\label{eq:restriction}
\end{equation}
is an isomorphism when tensored with $\QQ$. If the cokernel is non-zero, then $\text{char}(k)=p$ and the cokernel is killed by a power of $p$. 
\end{thm}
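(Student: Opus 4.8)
Let $\rho\colon\text{Hom}_U(A_U,B_U)\to\text{Hom}_D(A_D,B_D)$ denote the restriction map \eqref{eq:restriction}. The plan is to reduce the statement, prime by prime, to the Lefschetz theorem for fundamental groups together with the Tate conjecture for homomorphisms. First I would dispose of routine preliminaries: $\text{Hom}_U(A_U,B_U)$ and $\text{Hom}_D(A_D,B_D)$ are finitely generated free $\mathbb{Z}$-modules; the formation of $\text{Hom}$ of abelian schemes over a normal integral noetherian base commutes with passage to the generic point, so $\text{Hom}_U(A_U,B_U)=\text{Hom}_{k(U)}(A_\eta,B_\eta)$ and likewise over $D$; and since $A_U,B_U,D$ all descend to a subfield of $k$ finitely generated over the prime field, we may assume $k$ itself is of that form, so that the Tate conjecture is available over $k(U)$ and $k(D)$. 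The one geometric input I would then establish is that $\pi_1(D)\to\pi_1(U)$ is surjective: since $X$ is smooth and $X\setminus U$ has codimension $\geq 2$, Zariski--Nagata purity gives $\pi_1(U)\cong\pi_1(X)$, and since $D\subset U$ is a smooth ample divisor of the smooth projective variety $X$ with $\dim X\geq 2$, the Lefschetz theorem for fundamental groups gives that $\pi_1(D)\to\pi_1(X)$ is surjective. Consequently, for any two lisse $\ell$-adic sheaves that are restricted from $U$, the natural map from their $\pi_1(U)$-equivariant homomorphisms to their $\pi_1(D)$-equivariant homomorphisms is a bijection.

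Now fix a prime $\ell\neq\text{char}(k)$. The $\ell$-power torsion of $A_U$ and $B_U$ gives lisse sheaves $T_\ell A$, $T_\ell B$ on $U$, whose $\pi_1(U)$-action factors the Galois action on $T_\ell A_\eta$, $T_\ell B_\eta$. By the Tate conjecture for homomorphisms over the finitely generated field $k(U)$ --- Faltings in characteristic $0$, and Tate, Zarhin and Mori in characteristic $p$ --- the map $\text{Hom}_U(A_U,B_U)\otimes\mathbb{Z}_\ell\to\text{Hom}_{\pi_1(U)}(T_\ell A,T_\ell B)$ is an isomorphism, and similarly over $D$. These isomorphisms are compatible with $\rho$ and with the surjection $\pi_1(D)\to\pi_1(U)$, which by the previous paragraph induces an isomorphism $\text{Hom}_{\pi_1(U)}(T_\ell A,T_\ell B)\to\text{Hom}_{\pi_1(D)}(T_\ell A|_D,T_\ell B|_D)$. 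Hence $\rho\otimes\mathbb{Z}_\ell$ is an isomorphism for every $\ell\neq\text{char}(k)$. In particular $\rho$ is injective (its source being torsion-free) and $\rho\otimes\mathbb{Q}$ is an isomorphism.

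To conclude, let $M$ be the cokernel of $\rho$, a finitely generated $\mathbb{Z}$-module. Since $\rho\otimes\mathbb{Q}$ is an isomorphism, $M$ is finite; since $\rho\otimes\mathbb{Z}_\ell$ is an isomorphism for every $\ell\neq\text{char}(k)$, the prime-to-$\text{char}(k)$ part of $M$ vanishes. Therefore $M=0$ when $\text{char}(k)=0$, and $M$ is killed by a power of $p$ when $\text{char}(k)=p>0$, which is the assertion. In characteristic $p$ one can alternatively pin down the $p$-part of the cokernel directly: Lemma \ref{Lemma:lefschetz_hom_BT} applied to $\mathcal{G}_U=A_U[p^\infty]$ and $\mathcal{H}_U=B_U[p^\infty]$, combined with Theorem \ref{Theorem:extending_BT_dJ} and the Tate conjecture for $p$-divisible groups over $k(U)$, shows that $\rho\otimes\mathbb{Z}_p$ is injective with cokernel killed by a power of $p$.

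The main obstacle is the appeal to the Tate conjecture: upgrading a $\pi_1(U)$-equivariant morphism of $\ell$-adic Tate modules, or of $p$-divisible groups, to a genuine homomorphism of abelian schemes is not formal, and this is exactly where the theorems of Faltings, respectively Zarhin--Mori and de Jong, enter. A minor point, relevant only to the alternative argument above, is matching the "very ample" hypothesis of Lemma \ref{Lemma:lefschetz_hom_BT} with the "ample" hypothesis here. Finally, the example of a family of supersingular abelian threefolds over $\mathbb{P}^2$ shows that the $p$-power cokernel is genuinely present, so the conclusion cannot be strengthened to an isomorphism in characteristic $p$.
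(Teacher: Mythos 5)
Your proposal is correct and follows essentially the same route as the paper's proof: reduce to a finitely generated base field, identify homomorphisms over $U$ and over $D$ with homomorphisms at the generic points, and then apply the Tate conjecture for homomorphisms (Faltings in characteristic $0$, Tate--Zarhin in characteristic $p$) together with the surjectivity of $\pi_{1}(D)\rightarrow\pi_{1}(U)$ to conclude prime by prime away from $p$. The only difference is cosmetic: you spell out the $\pi_{1}$-surjectivity via purity and the Lefschetz theorem for fundamental groups, and you add an optional $p$-adic refinement via Lemma \ref{Lemma:lefschetz_hom_BT}, neither of which changes the argument.
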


\begin{proof}
We first remark that the above map is well-known to be injective and does not require any positivity property of $D$ (one may immediately reduce to the case of a discrete valuation ring).

We now reduce to the case when $k$ is a finitely generated field. Both sides of Equation \ref{eq:restriction} are finitely generated $\ZZ$-modules; hence we my replace $k$ by a subfield finitely generated over the prime field over which everything in Equation \ref{eq:restriction} is defined \emph{without changing either the LHS or the RHS}. As $C\subset U$ and $C$ is the complete intersection of smooth ample divisors of $X$, it follows that the complementary codimension of $U$ in $X$ is at least 2. Note further that by Grothendieck's Lefschetz theorem \cite[Expos\'e X, \S 2, Corollaire 2.6]{grothendieck2005cohomologie}, Zariski-Nagata purity, and induction, the map $\pi_1(C)\rightarrow \pi_1(U)$ is surjective.

By \cite[Ch. I, Prop. 2.7]{faltings2013degeneration}, the natural
map
\begin{equation}
\Hom(A_{U},B_{U})\rightarrow\Hom_{k(U)}(A_{k(U)},B_{k(U)})\label{eq:extend_av}
\end{equation}
is an isomorphism, and similarly for $C$. Now, as both sides of Equation \ref{eq:restriction} are finite free $\mathbb{Z}$-modules, it suffices to examine its completion at all finite places. There are two cases.
\begin{casenv}
\item $\text{char}(k)=0$. Then ``Tate's isogeny theorem'' is true for
$U$ and $C$; that is, the natural map
\[
\Hom_{U}(A_{U},B_{U})\otimes\ZZ_l\rightarrow\Hom_{\pi_{1}(U)}(T_{l}(A_{U}),T_{l}(B_{U}))
\]
is an isomorphism for every $l$, and similarly for $C$; this is a combination of work of Faltings \cite[Theorem 1, Page 211]{faltings1984rational} and Equation \ref{eq:extend_av}. On the other hand, as noted above, the restriction map $\pi_{1}(C)\rightarrow\pi_{1}(U)$
is surjective; therefore the natural map
\[
\Hom_{\pi_{1}(U)}(T_{l}(A_{U}),T_{l}(B_{U}))\rightarrow\Hom_{\pi_{1}(C)}(T_{l}(A_{C}),T_{l}(B_{C}))
\]
is an isomorphism. Moreover, this is true for all $l$. Therefore Equation \ref{eq:restriction} is an isomorphism.
\item $\text{char}(k)=p$. Then ``Tate's isogeny theorem'', in this case
a theorem of Tate-Zarhin-Mori \cite[ Ch. XII, Th\'eor\`eme 2.5(i), p. 244]{moret1985pinceaux} together with \ref{eq:extend_av}, implies that the argument of part (1) works as long as $l\neq p$. Therefore Equation \ref{eq:restriction} is rationally an isomorphism and the cokernel is killed by a power of $p$.
\end{casenv}
\end{proof}
\begin{rem}
Daniel Litt indicated to us that there is a more elementary proof of Theorem \ref{Theorem:Lefschetz_AV} (i.e. not using Faltings' resolution of the Tate conjecture for divisors on abelian varieties) when $\text{char}(F)=0$ along the lines of his thesis. He also pointed out that, when $F\cong\CC$, the result follows from the theorem
of the fixed part.
\end{rem}

\begin{example}
\label{Example:Litt}Litt constructed an example when Equation \ref{eq:restriction} is not an isomorphism in characteristic
$p$. Let $E/\FF$ be a supersingular elliptic curve, so $\alpha_{p}\subset E$
is a subgroup scheme. Then $E\times E\times E$ contains $\alpha_{p}^{3}$
as a subgroup scheme. There is a injective homomorphism
\[
(\alpha_{p})_{\mathbb{P}^{2}}\hookrightarrow(\alpha_{p}^{3})_{\mathbb{P}^{2}}
\]
given as follows: if $[x:y:z]$ are the coordinates on $\mathbb{P}^{2}$
and $(\alpha,\beta,\gamma)$ are linear coordinates on $\alpha_{p}^{3}\subset\mathbb{A}^{3}$,
then the image of $\alpha_p$ is cut out by the equations:
\[
[x:y:z]=[\alpha:\beta:\gamma]
\]
Let $A\rightarrow\PP^{2}$ be the quotient of the constant $E\times E\times E$ family over $\PP^{2}$ by this varying
family of $\alpha_{p}$. This family admits a principal polarization and the induced image
\begin{equation}\label{equation:p2_supersingular}
\PP^2\rightarrow\calA_{3,1}\otimes\FF
\end{equation}
is infinite, see the text after \cite[page 59, 9.4.16]{li1998moduli}. Suppose there exists a line $H\subset\PP^2$, an automorphism $\phi$ of $\PP^2$ that fixes $H$ pointwise, and a point $p\in\PP^2\backslash H$ with the fibers $A_{p}$ and $A_{\phi^{-1}(p)}$ non-isomorphic as unpolarized abelian threefolds. Let $B:=\phi^{*}A$, a new family of abelian threefolds over $\PP^2$. Then $A|_{H}\cong B|_{H}$
but $A\ncong B$ as abelian schemes over $\PP^2$.

We now explain why we can always find such a triple $(H,\phi,p)$. It suffices to find two points $p,q\in\PP^{2}$ such that $A_{p}\ncong A_{q}$; we can then of course find an automorphism that sends $p$ to $q$ and fixes a line. If $B/k$ is an abelian variety over a field, then there are only finitely many isomorphism classes of pairs $(B,\lambda)$, where $\lambda\colon B\rightarrow B^t$ is a principal polarization \cite[Theorem 1.1]{narasimhan1981polarisations}. (As usual, an isomorphism $\lambda\colon B\rightarrow B^t$ is called a principal polarization if there exists an ample line bundle $L$ on $B_{\bar k}$ such that the map $\lambda\colon B_{\bar k}\rightarrow B_{\bar k}^t:=Pic^0(B_{\bar k})$ is the same as the map induced by $$b\mapsto m^*_bL\otimes L^{-1}.$$ This map only depends on the N\'eron-Severi class of $L$.) Moreover, the image of the map in Equation \ref{equation:p2_supersingular} is infinite. Therefore we can find $p,q\in\PP^2(\FF)$ such that $A_{p}$ is not isomorphic to $A_{q}$ as \emph{unpolarized} abelian varieties.  
\end{example}

The primary application of \ref{Lemma:lefschetz_hom_BT} is to show
that certain quasi-polarizations lift.
\begin{cor}
\label{Corollary:quasi_polarizations_lift}Let $X/k$ be a smooth projective variety over a field $k$ of characteristic $p$ with $\dim X\geq2$, let $U\subset X$ be a Zariski open subset, and let $C\subset U$ be a smooth complete curve that is the complete intersection of smooth ample divisors of $X$. Let $\calG_{U}$ be a BT group on $U$. Then the natural map
\[
QPol(\calG_U)\rightarrow QPol(\calG_C)
\]
is injective and moreover for any $\phi_{C}\in QPol(\calG_C)$, there exists $n\geq0$ such that $p^{n}\phi_{C}$ is the in the image of the above map.
\end{cor}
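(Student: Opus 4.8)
The plan is to regard a quasi-polarization of $\mathcal{G}$ as a skew-symmetric isogeny sitting inside $\mathrm{Hom}(\mathcal{G},\mathcal{G}^{t})$, and to run the Lefschetz theorem for homomorphisms of Barsotti--Tate groups, Lemma \ref{Lemma:lefschetz_hom_BT} with $\mathcal{H}_{U}=\mathcal{G}_{U}^{t}$, through this description. Exactly as in the proof of that lemma, I would first reduce, via Proposition \ref{Proposition:homomorphism_BT_primary_extension} applied to the perfect closure of $k$, to the case $k$ perfect (isogenies and skew-symmetry being insensitive to this purely inseparable base change, by fppf descent of finite flatness). One then uses that the Serre dual, the biduality isomorphism $\mathcal{G}\cong(\mathcal{G}^{t})^{t}$, and hence the anti-involution $\phi\mapsto\phi^{t}$ on $\mathrm{Hom}(\mathcal{G},\mathcal{G}^{t})$, all commute with restriction to $D$. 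In particular $(\mathcal{G}_{U}^{t})|_{D}\cong(\mathcal{G}_{D})^{t}$ canonically, so the relevant map is a genuine map $\mathrm{Hom}_{U}(\mathcal{G}_{U},\mathcal{G}_{U}^{t})\to\mathrm{Hom}_{D}(\mathcal{G}_{D},\mathcal{G}_{D}^{t})$ carrying skew-symmetric homomorphisms to skew-symmetric ones and isogenies to isogenies, hence restricting to $QPol(\mathcal{G}_{U})\to QPol(\mathcal{G}_{D})$.

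Injectivity is then immediate: an element of $QPol(\mathcal{G}_{U})$ restricting to $0$ on $D$ is already $0$ in $\mathrm{Hom}_{U}(\mathcal{G}_{U},\mathcal{G}_{U}^{t})$ by Lemma \ref{Lemma:lefschetz_hom_BT}. For the second assertion, fix $\phi_{D}\in QPol(\mathcal{G}_{D})$. By the cokernel bound in Lemma \ref{Lemma:lefschetz_hom_BT} there is an integer $n\geq0$, independent of $\phi_{D}$, and a homomorphism $\psi_{U}\in\mathrm{Hom}_{U}(\mathcal{G}_{U},\mathcal{G}_{U}^{t})$ with $\psi_{U}|_{D}=p^{n}\phi_{D}$; it suffices to show this $\psi_{U}$ is a quasi-polarization. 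Skew-symmetry is the easy half: $\psi_{U}+\psi_{U}^{t}$ restricts on $D$ to $p^{n}\phi_{D}+(p^{n}\phi_{D})^{t}=p^{n}(\phi_{D}+\phi_{D}^{t})=0$, so $\psi_{U}+\psi_{U}^{t}=0$ by the injectivity just established.

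The main obstacle is the other half: showing $\psi_{U}$ is an isogeny, knowing only that $\psi_{U}|_{D}=p^{n}\phi_{D}$ is one. Since $\mathcal{G}_{U}$ and $\mathcal{G}_{U}^{t}$ have the same height, $\psi_{U}$ is an isogeny if and only if it becomes an isomorphism in the isogeny category of Barsotti--Tate groups on $U$, if and only if (by de Jong's Theorem \ref{Theorem:de Jong}, full faithfulness of $DC(U)\to FC(U)$ for the smooth $U$, and Remark \ref{Remark:cartier_dual_dieudonne}) the induced morphism $\mathbb{D}(\psi_{U})\otimes\mathbb{Q}\colon\mathbb{D}(\mathcal{G}_{U}^{t})\otimes\mathbb{Q}\to\mathbb{D}(\mathcal{G}_{U})\otimes\mathbb{Q}$ is an isomorphism in $\textbf{F-Isoc}(U)$. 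Its restriction to $D$ is $\mathbb{D}(p^{n}\phi_{D})\otimes\mathbb{Q}$, which is an isomorphism since $\phi_{D}$ is an isogeny and $p^{n}$ is invertible after $\otimes\mathbb{Q}$. Since $U$ and $D$ are connected and restriction $\textbf{F-Isoc}(U)\to\textbf{F-Isoc}(D)$ is an exact tensor functor preserving ranks, it reflects isomorphisms (an object dying on $D$ has rank $0$, hence vanishes, so the kernel and cokernel of $\mathbb{D}(\psi_{U})\otimes\mathbb{Q}$ both vanish). Hence $\psi_{U}$ is an isogeny, so $\psi_{U}\in QPol(\mathcal{G}_{U})$ maps to $p^{n}\phi_{D}$, proving the claim. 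Alternatively, since $X$ is projective and $X\setminus U$ has codimension $\geq2$, one may extend $\mathbb{D}(\psi_{U})\otimes\mathbb{Q}$ canonically to a morphism of overconvergent $F$-isocrystals on $X$ via Kedlaya--Shiho, exactly as in the proof of Lemma \ref{Lemma:lefschetz_hom_BT}, and apply Theorem \ref{Theorem:AE_Lefschetz} to the relevant internal $\mathrm{Hom}$-sheaves to propagate the isomorphism from $D$ to $X$.

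The one slightly delicate ingredient is the equivalence ``$\psi_{U}$ is an isogeny if and only if $\mathbb{D}(\psi_{U})\otimes\mathbb{Q}$ is an isomorphism of $F$-isocrystals'', i.e. that the isogeny category of Barsotti--Tate groups on $U$ embeds faithfully and exactly into $\textbf{F-Isoc}(U)$; but this is precisely the mechanism already invoked (if only tacitly) in the proof of Lemma \ref{Lemma:lefschetz_hom_BT}, so no genuinely new input is required beyond bookkeeping with the anti-involution and the isogeny category.
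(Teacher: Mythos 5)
Your overall structure matches the paper's: both proofs view $QPol$ inside $\text{Hom}(\mathcal{G},\mathcal{G}^{t})$, apply Lemma \ref{Lemma:lefschetz_hom_BT} with $\mathcal{H}_{U}=\mathcal{G}_{U}^{t}$ to get injectivity and a lift $\psi$ of $p^{n}\phi_{D}$, and deduce skew-symmetry of $\psi$ from injectivity of restriction exactly as you do (the preliminary reduction to perfect $k$ is superfluous at the level of the corollary, since it is already internal to the lemma, but harmless). Where you genuinely diverge is the verification that $\psi$ is an isogeny. The paper stays on the group-scheme side: being an isogeny at a single point of $U$ spreads to a Zariski open subset (the discussion after \cite[3.3.9]{chai2014complex}), and then de Jong's theorem \cite[1.2]{de1998homomorphisms} upgrades this to all of $U$ because $U$ is integral and normal. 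You instead pass to $\textbf{F-Isoc}(U)$ and use that restriction to $D$ is an exact, faithful (rank-preserving) tensor functor, so $\mathbb{D}(\psi)\otimes\mathbb{Q}$ is an isomorphism on $U$ because it is on $D$; this is a clean argument and buys a statement that visibly only uses the connectedness of $U$ and $D$ at that stage.

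The one point you should tighten is the asserted equivalence ``$\psi$ is an isogeny if and only if $\mathbb{D}(\psi)\otimes\mathbb{Q}$ is an isomorphism.'' The forward direction is formal, and the converse can be started formally: by the lattice argument of Lemma \ref{Lemma:isogenous-F-crystals-with-O} and de Jong's Theorem \ref{Theorem:de Jong} one produces $\chi\colon\mathcal{G}_{U}^{t}\rightarrow\mathcal{G}_{U}$ with $\chi\circ\psi=p^{m}$ and $\psi\circ\chi=p^{m}$, so $\ker\psi$ is a finite closed subgroup scheme of $\mathcal{G}_{U}[p^{m}]$. But the definition of isogeny used in the paper requires this kernel to be finite \emph{locally free}, and flatness of the kernel is not a consequence of the full-faithfulness statements invoked in the proof of Lemma \ref{Lemma:lefschetz_hom_BT}; it is exactly the issue for which the paper cites \cite[3.3.10]{chai2014complex} and de Jong, and it uses that $U$ is normal and integral (alternatively, one can argue via constancy of the fibrewise kernel degree, using $\deg(\psi_{u})\deg(\chi_{u})=p^{mh}$ and upper semicontinuity, plus the flatness criterion for finite morphisms over a reduced base). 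So your claim that ``no genuinely new input is required'' is a little optimistic: your route is correct, but to close it you should cite the same normal-base isogeny results the paper does (or supply the flatness argument), rather than appeal tacitly to Lemma \ref{Lemma:lefschetz_hom_BT}.
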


\begin{proof}
We have a commutative square
\[
\xymatrix{QPol(\calG_{U})\ar[r]\ar[d] & \Hom_{U}(\calG_{U},\calG_{U}^{t})\ar[d]\\
QPol(\calG_{C})\ar[r] & \Hom_{C}(\calG_C,\calG_C^{t})
}
\]
where the vertical arrow on the right is injective with torsion cokernel by Lemma \ref{Lemma:lefschetz_hom_BT}. Now, $QPol(\calG_C)$ are those isogenies in $\Hom_{C}(\calG_C,\calG_{C}^{t})$ that are anti-symmetric, i.e. those $\phi_{C}$ such that $\phi_{C}^{t}=-\phi_{C}$. Given $\phi_{C}\in QPol(\calG_{C})$, we know there exists an $n\geq0$ such that $p^{n}\phi_{C}$ is the image a unique $\psi\in\Hom_{U}(\calG_U,\calG_{U}^{t})$; to prove the corollary we must check that $\psi$ is an isogeny and anti-symmetric on all of $U$.

First of all, we remark that the transpose map commutes with the vertical right hand arrow, namely the restriction to $C$. As the vertical right hand arrow is injective, the skew-symmetry of $p^{n}\phi_{C}$ implies that $\psi$ is automatically skew-symmetric. The set of points where $\psi$ is an isogeny is open by \cite[3.3.8]{chai2014complex} and closed by the discussion after \cite[3.3.10]{chai2014complex}. Therefore $\psi$ is a quasi-polarization, as desired.
\end{proof}
\begin{cor}
\label{Corollary:formal_abelian_scheme}Let $X/k$ be a smooth projective variety over a field $k$ of characteristic $p$, let $C\subset X$ be a smooth curve that is the complete intersection of smooth, ample divisors of $X$, and let
$U\supset C$ be a Zariski open set of $X$. Let $A_{C}\rightarrow C$
be an abelian scheme and let $\calG_{U}$ be a BT group on $U$ such that $\calG_{C}\cong A_{C}[p^{\infty}]$ as BT groups on
$C$. Then there exists a polarizable formal abelian scheme $\hat{A}$ over the formal scheme $X_{/C}$ such that $\hat{A}[p^{\infty}]\cong\calG|_{X_{/D}}$.
The formal abelian scheme $\hat{A}$ is (uniquely) algebraizable.
\end{cor}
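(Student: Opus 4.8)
The plan is to build $\hat A$ one infinitesimal neighbourhood at a time via the Serre--Tate theorem, to transport a polarization from $D$ to $X_{/D}$ using Corollary~\ref{Corollary:quasi_polarizations_lift}, and then to algebraize.

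\emph{Construction of $\hat A$.} Write $X_{/D}=\varinjlim_n D_n$, where $D_n\subset X$ is the $n$-th infinitesimal neighbourhood of $D$. Since $|D_n|=|D|\subset U$ and $U$ is open in $X$, each $D_n$ is a closed subscheme of $U$ on which $p$ is nilpotent (indeed $p=0$, as $\mathrm{char}(k)=p$), and the ideal of $D=D_0$ in $D_n$ is nilpotent; moreover $\mathcal{G}_U|_{D_n}$ is a Barsotti--Tate group over $D_n$ lifting $\mathcal{G}_U|_{D_0}=\mathcal{G}_D\cong A_D[p^\infty]$. By the Serre--Tate theorem \cite[Theorem~1.2.1]{katz1981serre}, the functor carrying an abelian scheme over $D_n$ to the pair consisting of its restriction to $D$ and its $p$-divisible group over $D_n$ is an equivalence onto the category of such pairs. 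Hence there is an abelian scheme $A_n\to D_n$, unique up to unique isomorphism, with $A_n|_D\cong A_D$ and $A_n[p^\infty]\cong\mathcal{G}_U|_{D_n}$, compatibly with the given identification over $D$. Uniqueness forces $A_n|_{D_{n-1}}\cong A_{n-1}$, so the $A_n$ assemble into a formal abelian scheme $\hat A$ over $X_{/D}$ with $\hat A[p^\infty]\cong\mathcal{G}|_{X_{/D}}$.

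\emph{Polarizability.} As $D$ is a smooth projective variety, it is normal and noetherian, so $A_D$ is polarizable \cite[Chapter~I, Remark~1.10(a)]{faltings2013degeneration}; fix a polarization $\lambda_D\colon A_D\to A_D^t$ and let $\phi_D\in QPol(\mathcal{G}_D)$ be the induced quasi-polarization of $A_D[p^\infty]=\mathcal{G}_D$. By Corollary~\ref{Corollary:quasi_polarizations_lift} there is an integer $n\ge 0$ such that $p^n\phi_D$ lies in the image of $QPol(\mathcal{G}_U)\to QPol(\mathcal{G}_D)$; pick a preimage $\phi_U\in QPol(\mathcal{G}_U)$ and restrict it to $X_{/D}$. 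Because the Serre--Tate correspondence is compatible with Serre duality, transporting $\phi_U|_{X_{/D}}$ through it yields a symmetric isogeny $\hat\lambda\colon\hat A\to\hat A^t$ over $X_{/D}$ whose restriction to $D$ is $p^n\lambda_D$. Now the line bundle $\mathcal{L}:=(\mathrm{id},\hat\lambda)^*\mathcal{P}$ on $\hat A$ (pullback of the Poincar\'e bundle) restricts, on every geometric fibre of $\hat A\to X_{/D}$ --- which, since the reduced subscheme underlying $X_{/D}$ is $D$ itself, coincides with a geometric fibre of $A_D\to D$ --- to a power of the ample line bundle inducing $p^n\lambda_D$. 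Hence $\mathcal{L}$ is ample on every fibre, so by the fibrewise criterion for relative ampleness it is relatively ample on each $A_n\to D_n$; thus $\hat A$ is polarizable and each $A_n\to D_n$ is projective.

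\emph{Algebraization.} Replacing $\mathcal{L}$ by a suitable power, we obtain a relatively very ample line bundle on $\hat A$ over $X_{/D}$ and hence a closed formal immersion $\hat A\hookrightarrow\mathbb{P}(\mathcal{E})$ over $X_{/D}$ for a formal vector bundle $\mathcal{E}$ on $X_{/D}$. Since $D$ is very ample in the projective variety $X$ and $\dim X\ge 2$, the Lefschetz-type algebraization results (Lemma~\ref{Lemma:algebraize_formal_map}; compare SGA~2 and Hironaka--Matsumura) algebraize in turn $\mathcal{E}$, the ideal of $\hat A$, the identity section and the group law over some Zariski open neighbourhood $U'\supset D$ in $X$; the group axioms persist because $X_{/D}$ is schematically dense (Lemma~\ref{Lemma:formal_maps_agree}). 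After shrinking $U'$ around $D$ we may assume the resulting proper $U'$-scheme is smooth with geometrically connected fibres, hence an abelian scheme over $U'$ whose completion along $D$ is $\hat A$. Uniqueness of the algebraization is standard: two algebraizations define the same formal object over $X_{/D}$, so the induced classifying morphisms into a separated moduli space agree on $X_{/D}$, hence agree by Lemma~\ref{Lemma:formal_maps_agree}.

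\emph{Main obstacle.} The substantive input is the last step: one must invoke the effective-Lefschetz algebraization machinery, whose positivity and dimension hypotheses are exactly what force $D$ ample and $\dim X\ge 2$ (and whose failure in lower dimension is reflected in Remark~\ref{Remark:extension_dimension_bounds}). The one delicate conceptual point earlier is that the $p$-divisible-group datum only furnishes a symmetric isogeny $\hat\lambda$, which must be upgraded to an honest polarization via the fibrewise ampleness criterion, together with the observation that $X_{/D}$ and $D$ have the same geometric points.
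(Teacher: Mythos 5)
Your construction of $\hat A$ and the polarizability argument follow the paper's own route: existence of a polarization on $A_D$ (Faltings--Chai, using that $D$ is normal), lifting $p^n\phi_D$ to a quasi-polarization of $\mathcal{G}_U$ by Corollary \ref{Corollary:quasi_polarizations_lift}, and the Serre--Tate theorem to produce $\hat A$, with the lifted quasi-polarization giving polarizability; your fibrewise-ampleness upgrade of the symmetric isogeny $\hat\lambda$ to an honest polarization is a correct elaboration of the single line the paper devotes to that point. Two small omissions: you should verify the hypothesis of Corollary \ref{Corollary:quasi_polarizations_lift} that $X\setminus U$ has codimension at least $2$ (it does, since $D$ is ample and $D\subset U$, so $X\setminus U$ meets no positive-dimensional closed subvariety), and for uniqueness via Lemma \ref{Lemma:formal_maps_agree} you need a separated \emph{scheme} as target, e.g. $\mathscr{A}_{g,d,l}$ after adding prime-to-$p$ level structure.

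The algebraization step is where you diverge from the paper, and as written it has a genuine gap. The paper simply invokes Grothendieck's algebraization theorem for the polarized formal abelian scheme; the Zariski-neighborhood statement you aim for is really the content of the next result, Corollary \ref{Corollary:AV_extends_BT_extends}. Your route requires extending the formal vector bundle $\mathcal{E}$ (and the formal closed subscheme $\hat A\subset\mathbb{P}(\mathcal{E})$) from $X_{/D}$ to a Zariski neighborhood of $D$, and neither tool you cite delivers this under the stated hypotheses: Lemma \ref{Lemma:algebraize_formal_map} applies to maps into finite-type Artin stacks with \emph{finite} diagonal, and the stack of rank-$r$ vector bundles ($B\mathrm{GL}_r$) has affine, positive-dimensional diagonal, so it does not algebraize $\mathcal{E}$; while the SGA 2 effective Lefschetz condition $\mathrm{Leff}(X,D)$, which is what extends formal vector bundles to a Zariski neighborhood, needs depth-three type hypotheses --- for smooth $X$ essentially $\dim X\geq 3$ --- and so fails to cover exactly the surface case $\dim X=2$ that the paper's applications require. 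The repair is the paper's own device in Corollary \ref{Corollary:AV_extends_BT_extends}: use the polarization (of some degree $d$) to define a classifying map $X_{/D}\to\mathscr{A}_{g,d}$, a finite-type stack with finite diagonal, algebraize \emph{that} map by Lemma \ref{Lemma:algebraize_formal_map}, and pull back the universal abelian scheme; or, for the statement as literally posed, conclude as the paper does by citing Grothendieck's algebraization theorem once polarizability is in hand.
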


\begin{proof}
It follows from \cite[page 6]{faltings2013degeneration} that $A_{C}\rightarrow C$ is globally projective. Let $\phi_{C}$ be a polarization of $A_{C}$; abusing notation, we also let $\phi_{C}$ denote the
induced quasi-polarization on the BT group $\calG_{C}$.  By Corollary \ref{Corollary:quasi_polarizations_lift}, there exists an $n\geq0$ such that the quasi-polarization $p^{n}\phi_{C}$
lifts to a quasi-polarization $\psi$ of $\calG_{U}$. Applying the Serre-Tate theorem \cite[Theorem 1.2.1]{katz1981serre}, we therefore get a formal abelian scheme $\hat{A}$ on $X_{/D}$; the fact that the quasi-polarization lifts implies that $\hat{A}$ is polarizable and hence is (uniquely) algebraizable by Grothendieck's algebraization theorem \cite[TAG 089a]{stacks-project}.
\end{proof}

\section{Lefschetz Theorems II}\label{Section:Lefschetz 2}

Good references for this section are \cite{grothendieck2005cohomologie, hartshorneample}.
\begin{defn}
\cite[Expos\'e X, Section 2]{grothendieck2005cohomologie} We say a pair of a noetherian scheme $X$ and a non-empty closed subscheme $Z$ satisfies the property $Lef(X,Z)$ if for any vector bundle $E$ on a Zariski neighborhood $U\supset Z$, the following restriction map is an isomorphism:
\[
H^{0}(U,E)\cong H^{0}(X_{/Z},E|_{X_{/Z}})
\]
\end{defn}
\begin{defn}
(\cite[p.64]{hironakaformal} or \cite[Ch V, \S 1, p. 190]{hartshorneample}) Let $X$ be an integral noetherian scheme and $Z\subset X$ be a non-empty, closed subscheme. We say that $Z$ is \emph{G3} in $X$ if the natural embedding $$K(X)\hookrightarrow K(X_{/Z})$$
from the ring of rational functions to the ring of formal rational functions is an isomorphism.
\end{defn}
The following theorem is an easy consequence of \cite[Ch V]{hartshorneample}.
\begin{thm}\label{Theorem:algebraize_formal_map}Let $X/k$ be a smooth projective variety. Let $C\subset X$ be a smooth curve which is the complete intersection of smooth, ample divisors. Let $\hat{f}\colon X_{/C}\rightarrow M$ be a morphism to a quasi-projective variety. Then there exists an open set $W\supset C$ together with a map $f\colon W\rightarrow M$ such that $\hat{f}$ is the formal completion of $f$.
\end{thm}
\begin{proof}
It follows from \cite[Ch V, Corollary 2.3, p. 202]{hartshorneample} that $C$ is G3 in $X$. Recall that topologically, $X_{/C}$ is homeomorphic to $C$; when we make set-theoretic arguments, we pass freely between $X_{/C}$ and $C$. Embed $M$ in $\PP^m_k$ and $\hat{L}:=\hat{f}^*\mathcal{O}_M(1)$ be the pullback of the ample class to $X_{/C}$. Then $\hat{L}$ is globally generated. Pick regular sections $(\hat{s}_i)^m_{i=1}$ of $H^0(X_{/C},\hat{L})$ that define the map to $\PP^m$; in particular, the $\hat{s}_i$ globally generate. Let $V_i\subset X_{/C}$ be the locus on which $\hat{s}_i\neq 0$. Then $\tilde{s}_i$ yields a trivialization over $V_i$:
$$\hat{s}_i\colon \calO_{X_{/C}}|_{V_i}\rightarrow \hat{L}_{V_i}.$$
As the $s_i$ globally generate $\hat{L}$, it follows that $\displaystyle \cup_i V_i=X_{/C}$.

With respect to the trivializations given by the $\hat{s}_i$, the line bundle $\hat{L}$ is defined by the transition functions $\hat{\phi}_{ij}:=\frac{s_i}{s_j}$ on $V_{ij}:=V_i \cap V_j$. By definition, $\hat{\phi}_{ij}\in K(X_{/C})$. By the G3 property, there is a \emph{unique} meromorphic function $\phi_{ij}\in K(X)$ that restricts to $\hat{\phi}_{ij}$ on $X_{/C}$. Call $D_{ij}$ the divisor of zeroes of $\phi_{ij}$.

For each $i\in \{1,2,\ldots,m\}$, there exists an open set $U_i\subset X$ such that
\begin{itemize}
\item $U_i\cap X_{/C}=V_i$ and
\item $U_i \cap D_{ij} = \emptyset$ for $j\neq i$.
\end{itemize}
Indeed, we can of course find an open set $U_i\subset X$ with $U_i\cap X_{/C}=V_i$. Replace $U_i$ by the complement of $\displaystyle\cup_{j\neq i}D_{ij}\cap U_i$ in $U_i$; in other words, remove all of the zeroes of the $\phi_{ij}$ from $U_i$ for $j\neq i$. The $\hat{s}_i$ are regular (and not merely meromorphic) sections; hence $D_{ij}\cap X_{/C}$ is a subset of the finite set of zeros of $\hat{s_i}$. Therefore $U_i\cap X_{/C}=V_i$. Set
$$\displaystyle W:=\cup_i U_i \supset X_{/C}$$

and $U_{ij}:=U_i\cap U_j$. Then on $U_{ij}$, the functions $\phi_{ij}$ are invertible. Moreover, the fact that $\hat{\phi}_{ij}\hat{\phi}_{jk}=\hat{\phi}_{ik}$ implies that $\phi_{ij}\phi_{jk}=\phi_{ik}$ by the injectivity of the map $K(X)\hookrightarrow K(X_{/C})$. Therefore we obtain a line bundle $L_W$ on $W$ algebraizing $\hat{L}$. 

By $Lef(X,C)$ \cite[Ch. V, Prop 2.1, p. 200]{hartshorneample}, we deduce that after shrinking $W$ (while still containing $C$), we may ensure that $H^0(W,L_W)=H^0(X_{/C},\hat{L})$. This implies that the $\hat{s}_i$ algebraize to sections $s_i$ of $H^0(W,L_W)$. Further replace $W$ by the complement of the common locus of zeroes of the $s_i$.

Finally, we obtain a morphism $f\colon W\rightarrow \PP^m$. As $f|_{X_{/C}}$ lands in $M$ and $X$ is integral, this implies that the image of $g$ entirely lands in the Zariski closure $\bar{M}$ of $M\subset \PP^m$ by schematic density of $X_{/C}$ inside of $X$. By further shrinking $W$, we obtain that the map $\hat{f}\colon X_{/C}\rightarrow M$ uniquely algebraizes to a map $f\colon W\rightarrow M$, as desired. 
\end{proof}
\begin{cor}\label{Corollary:algebraize_formal_abelian}Let $X/k$ be a smooth projective variety and let $C\subset X$ be a smooth curve that is the complete intersection of smooth ample divisors. Let $\hat{A}\rightarrow X_{/C}$ be an polarizable abelian scheme over the formal completion of $X$ along $C$. If there exists $l>2$, prime to $\text{char}(k)$, such that the $l$-torsion of $\hat{A}|_C\rightarrow C$ is trivial, then there exists a Zariski open subset of $U\subset X$ containing $C$, and an abelian scheme $A_U\rightarrow U$ that extends $\hat{A}\rightarrow X_{/C}$.
\end{cor}
\begin{proof}
Note that by topological invariance of the \'etale site, if the $l$-torsion of $\hat{A}|_C$ is trivial, then so is the $l$-torsion of $\hat{A}$. If $l>2$, then the moduli space $\mathscr{A}_{g,d,l}$ exists as a quasi-projective scheme over $\Spec(\ZZ[1/l])$. Apply Theorem \ref{Theorem:algebraize_formal_map} to the induced map $X_{/C}\rightarrow \mathscr{A}_{g,d,l}$. 
\end{proof}
Corollary \ref{Corollary:algebraize_formal_abelian} is not true if one does not assume that the $l$-torsion is trivial for the same reason that Theorem \ref{Theorem:algebraize_formal_map} is not true with $M$ a Deligne-Mumford stack. For example, if $C\subset \PP^2$ is a smooth plane curve of positive genus, it is easy to see that there is a non-trivial map $\PP^2_{/C}\rightarrow B\ZZ/2\ZZ$ that does not algebraize.

D. Litt explained the following lemma to the first author. 
\begin{lem}
\label{Lemma_BT_extending_iso}Let $X/k$ be a smooth projective variety with $\dim(X)\geq 2$ and let $U\subset X$ be an open subset. Let $C\subset U$ be a smooth curve that is the complete complete intersection of smooth ample divisors of $X$. Let $G_{U}$ and $H_{U}$ be finite flat group schemes or Barsotti-Tate groups on $U$. Then the natural restriction map:
\begin{equation}\label{equation:restrict_maps_BT}\displaystyle
\text{Isom}_{U}(G_U,H_U)\rightarrow \text{Isom}_{X_{/C}}(G_U|_{X_{/C}},H_U|_{X_{/C}})
\end{equation}
is an isomorphism of sets. In particular, if $G_U|_{X_{/C}}$ and $H_U|_{X_{/C}}$ are isomorphic, then $G_U$ and $H_U$ are isomorphic.
\end{lem}

\begin{proof}
As usual, $U$ has complementary codimension at least 2. If we prove the lemma for finite flat group schemes, then the result for BT groups immediately follows: a homomorphism of BT groups is simply a compatible sequence of homomorphisms of $n$-trucated BT groups. Therefore we assume $G_{U}$ and $H_{U}$ are finite flat group schemes, in which case Equation \ref{equation:hom_BT_eta} is clearly injective by the schematic density of $X_{/C}$ in $X$. Our task is therefore to prove that Equation \ref{equation:restrict_maps_BT} is surjective.

 We remark that the $Lef(X,C)$ is satisfied by \cite[Ch. V, Proposition 2.1]{hartshorneample}. By definition, this means for any vector bundle $E$ on a Zariski neighborhood
$V\supset D$, the natural restriction map on sections
\[
H^{0}(V,E)\cong H^{0}(X_{/C},E|_{X_{/C}})
\]
is an isomorphism. Now, finite flat group schemes are given by their associated (coherent) sheaves of Hopf algebras. Write $\mathcal{O}_{G_U}$ and $\mathcal{O}_{H_U}$ for the Hopf algebras of $G_U$ and $H_U$ respectively. Then $Lef(X,C)$ implies that any isomorphism of formal vector bundles $$\hat{\varphi}\colon \mathcal{O}_{G_{U}}|_{X_{/C}}\rightarrow \mathcal{O}_{H_{U}}|_{X_{/C}}$$ canonically lifts to an isomorphism $\varphi\colon \mathcal{O}_{G_U}\rightarrow \mathcal{O}_{H_U}$ of vector bundles on $U$. We must show if $\hat{\varphi}$ preserves the Hopf-algebra structure, then $\varphi$ also preserves the Hopf-algebra structure. As the Hopf-algebra structures on are given as morphisms of vector bundles, the fact that $\hat{\varphi}$ is an isomorphism of sheaves of Hopf algebras together with $Lef(X,C)$ implies that $\varphi$ is also an isomorphism of sheaves of Hopf algebras on $U$, as desired.

\end{proof}
As a byproduct of the algebraization machinery, we have the following, which is a cousin of Simpson's Theorem \ref{Theorem:simpson_lefschetz}.
\begin{cor} \label{Corollary:AV_extends_BT_extends}Let $X/\Fq$ be a smooth projective variety with $\dim X\geq2$ and let $U\subset X$ be an open subset whose complement has codimension at least 2. Let $C\subset U$ be a smooth curve that is the complete intersection of smooth, ample divisors of $X$. Let $A_{C}\rightarrow C$
be an abelian scheme of dimension $g$. Then the following are equivalent.
\begin{itemize}
\item There exists a Barsotti-Tate group $\calG_U$ on $U$ such that $\calG_{C}\cong A_C[p^{\infty}]$.
\item There exists an abelian scheme $A_U\rightarrow U$ extending $A_C$.
\end{itemize}
\end{cor}

\begin{proof}
It suffices to prove the first statement implies the second. By Corollary \ref{Corollary:formal_abelian_scheme}, there exists a (polarizable) formal abelian scheme $\hat{A}$ over $X_{/C}$ extending $A_{C}.$ Picking a polarization, this yields a map $X_{/D}\rightarrow\mathscr{A}_{g,d}$. 

By \cite[Theorem 5.1]{kedlaya2016notes}, the restriction functor $\fisoc{X}\rightarrow \fisoc{U}$ is an equivalence of categories. Denote the canonical extension of $\mathbb{D}(\calG_U)\otimes \QQ\in\fisoc{U}$ by $\calE\in\fisoc{X}$. Our assumption implies that $\calE_C\cong \mathbb{D}(A_C[p^{\infty}])\otimes \QQ.$
Using the equivalence of categories and \cite[Theorem 1.2 and Proposition 4.9]{pal2015monodromy}, it follows that $\calE$ is semi-simple.
%\footnote{While \cite[Theorem 1.2]{pal2015monodromy} only applies when the base is a curve, the main input is Proposition 4.9 and Remark 4.11 of \emph{loc. cit.}, which do not depend on the base being a curve. The key is that $\calE_c$ is semi-simple for $c$ a closed point of $C$.}

As $\det(\calE|_C)\cong \Qp(-g)$ it follows that $\det(\calE)\cong \Qp(-g)$. Every irreducible summand $\calE_i$ of $\calE$ is algebraic because it is on $C$. We note that for $c$ a closed point of $C$, we have $P_c(\calE,t)\in \QQ[t]$. 

Pick an isomorphism $\iota\colon \Qpbar\rightarrow \Qlbar$. By \cite[Theorem 0.3]{abe2016lefschetz} or \cite[Theorem 0.4.1]{kedlayacompanions}, there is a unique semi-simple $\iota$-companion to $\calE$, which is a lisse $\Qlbar$ sheaf $L$.
%For $c$ a closed point of $C$, the polynomial $P_c(L,t)\in \QQ[t]$ by the companion relation. Applying \cite[Proposition 7.1]{esnault2011notes} and induction, we deduce that for \emph{all} closed points $x$ of $X$,
%$$P_x(L,t)\in \QQ[t]$$
%and hence $P_x(\calE,t)\in \QQ[t]$. Pick a geometric point $\bar{c}$ of $C$.
The $l$-adic sheaf $L$ may be considered as a representation $W(X,\bar{c})\rightarrow \text{GL}_{2g}(\Qlbar)$. The map $W(C,\bar{c})\rightarrow W(X,\bar{c})$ is surjective by the usual Lefschetz theorems and the representation $W(C,\bar{c})\rightarrow \text{GL}_{2g}(\Qlbar)$ conjugates into a representation $W(C,\bar{c})\rightarrow \text{GL}_{2g}(\ZZ_l)$ because $L_C$ is compatible with $A_C\rightarrow C$. Hence the representation $L$ conjugates into a $\ZZ_l$ local system on $X$. Take a connected finite \'etale cover of $X'$ of $X$ that trivializes the mod-$l$ representation. Let $C'$ and $U'$ denote the inverse image to $X'$ of $C$ and $U$ respectively, and note that $C'$ is still the smooth complete intersection of smooth ample divisors inside of $X'$. Then we obtain a map $X'_{/C'}\rightarrow \mathscr{A}_{g,d,l}$, which globalizes to an open neighborhood $V'\subset X'$ of $C'$ by Corollary \ref{Corollary:algebraize_formal_abelian}. 
Lemma \ref{Lemma_BT_extending_iso} implies that $A_{V'}[p^{\infty}]\cong\mathcal{G}_{V'}$
because they agree on $X'_{/C'}$. By Corollary \ref{Corollary:grothendieck},
$A_{V'}$ extends to an abelian scheme $A_{U'}\rightarrow U'$ with $A_{U'}[p^{\infty}]\cong\mathcal{G}_{U'}$.

Finally, note that the abelian scheme $A_{U'}\rightarrow U'$, equipped with isomorphisms $A_{U'}|_{C'}\cong A_{C'}$ and $A_{U'}[p^{\infty}]\cong \calG_{U'}$ is \emph{unique up to unique isomorphism}. Indeed, use \cite[Proposition 1.2 on p. 64]{grothendieck1966theoreme}. Set $S=X$, let $s$ be the generic point of $X$ (which is the unique associated point of $S$), let $s'$ be any point of $C$, and let $S'$ be $\Spec(\hat{\calO}_{S,s'})$ be the completion of the local ring at $s'$. Then $S'\rightarrow \Spec(\calO_{S,s'})$ is faithfully flat, and the Serre-Tate theorem will imply the desired uniqueness.

Moreover, $A_{U'}\rightarrow U'$ is projective. Therefore we may apply faithfully flat descent to canonically descend to an abelian scheme $A_U\rightarrow U$, as desired. (This argument appears in \cite[p. 73, lines 7-12]{grothendieck1966theoreme}.)
\end{proof}
\begin{rem}We note that a slightly stronger variant of Corollary \ref{Corollary:AV_extends_BT_extends} is true when $\Fq$ is replaced by a field $k$ of characteristic 0. A Barsotti-Tate group in characteristic 0 is \'etale and hence satisfies Zariski-Nagata purity. One uses Simpson's Theorem \ref{Theorem:simpson_lefschetz} to prove that there is a family $A_{X,\CC}\rightarrow X_{\CC}$ extending $A_{C,\CC}\rightarrow C_{\CC}$. Then one transports the descent datum of $A_{C,\CC}$ to $A_{X,\CC}$ using Theorem \ref{Theorem:Lefschetz_AV}. Finally, applying Weil descent \cite[Appendix A]{varshavskyshimura} shows that we may find an abelian scheme $A_X\rightarrow X$ extending $A_C\rightarrow C$, as desired.

%using descends down to $k$ using Theorem \ref{Theorem:Lefschetz_AV} and Weil We sketch an argument. By the Lefschetz principle, we may assume $k\subset \CC$. Base-changing, we have an abelian scheme $$A_{C_{\CC}}\rightarrow C_{\CC}$$ such that the Barsotti-Tate group $A_{C_{\CC}}[p^{\infty}]$ extends to $U_{\CC}$. In characteristic zero, Barsotti-Tate groups are \'etale. Using Zariski-Nagata purity, the data of the extension yields a commutative diagram:
%\[
%\xymatrix{
%\pi_1(C)\ar[rr]\ar[dr] & & \pi_1(X)\ar[dl]\\
%&                      \text{GL}_{2g}(\ZZ_p) &
%}
%\]
%As the top arrow is surjective, the image of the representations are the same. The image of the representation $\pi_1(C)\rightarrow \text{GL}_{2g}(\ZZ_p)$ may be conjugated into $\text{GL}_{2g}(\ZZ)$ by assumption; hence the same is true for the extended representation. It follows from Theorem \ref{Theorem:simpson_lefschetz} that there is a unique abelian scheme $A_{X_{\CC}}\rightarrow X_{\CC}$ extending 
%$A_{C_{\CC}}\rightarrow C_{\CC}$.
%We are left with showing that the above abelian scheme descends to $X/k$. Let $\sigma\in \text{Aut}_k(\CC)$. Then by Theorem \ref{Theorem:Lefschetz_AV}, the natural map:
%$$\Hom(^{\sigma}A_{X_{\CC}},A_{X_{\CC}})\rightarrow \Hom(^{\sigma}A_{C_{\CC}},A_{C_{\CC}})$$
%is an isomorphism. Therefore, the descent datum on the right hand side canonically transports to a descent datum on the left hand side. Applying  we see that the family $A_{X_{\CC}}\rightarrow X_{\CC}$ descends to an abelian scheme $A_X\rightarrow X$, as desired. 

\end{rem}

\begin{rem}\label{Remark:extend_AV_general_field}
We note that the key point where we use that the base field is $\Fq$ in Corollary \ref{Corollary:AV_extends_BT_extends} is to prove that a specific connected finite \'etale cover of $C$ extends to a connected finite \'etale cover of $X$. We do this via $l$-adic companions.
\end{rem}
The following is a version of Corollary \ref{Corollary:AV_extends_BT_extends} over general fields of characteristic $p$. Note that, in contrast to Corollary \ref{Corollary:AV_extends_BT_extends}, we assume that the subvariety has dimension at least 2.
\begin{cor}\label{Corollary:extend_AV_extend_BT_ii}
Let $X/k$ be a smooth projective variety over a field of characteristic $p$ with $\dim X\geq 3$ and let $U\subset X$ be an open subset whose complement has codimension at least 3. Let $Z\subset U$ be a smooth complete subvariety of dimension at least 2 that is the complete intersection of smooth, ample divisors of $X$. Let $A_{Z}\rightarrow Z$
be an abelian scheme of dimension $g$. Then the following are equivalent.
\begin{itemize}
\item There exists a Barsotti-Tate group $\calG_U$ on $U$ such that $\calG_{Z}\cong A_Z[p^{\infty}]$.
\item There exists an abelian scheme $A_U\rightarrow U$ extending $A_Z$.
\end{itemize}
\end{cor}
\begin{proof}First of all, we explain the dimension constraints. Let $2\leq d=\dim(Z)$. Then the existence of an open set $U$ such that $Z\subset U$ implies that the codimension of the complement of $U$ in $X$ is at least $d+1\geq 3$ because $Z$ is proper and the complete intersection of ample divisors. Moreover, we claim that the maps $\pi_1(Z)\rightarrow  \pi_1(U)\rightarrow \pi_1(X)$ are all isomorphisms. Indeed, by the classical Lefschetz theorems, $\pi_1(Z)\rightarrow \pi_1(X)$ is an isomorphism because $\dim(Z)\geq 2$ \cite[Ch. IV, Theorem 1.5, Theorem 2.1]{hartshorneample}. Moreover, the map $\pi_1(U)\rightarrow \pi_1(X)$ is an isomorphism by Zariski-Nagata purity. Therefore, a connected finite \'etale cover $Z'\rightarrow Z$ trivializing the $l$-torsion of $A_Z\rightarrow Z$ extends canonically to a connected finite \'etale cover $X'\rightarrow X$. Then the exact proof of Corollary \ref{Corollary:AV_extends_BT_extends} applies. 
\end{proof}
%\begin{rem}
%If $p|d$, the Deligne-Mumford stack $\mathscr{A}_{g,d}\otimes\mathbb{F}$
%is neither smooth nor tame. 
%\end{rem}

\begin{rem}
\label{Remark:extension_dimension_bounds}Corollary \ref{Corollary:AV_extends_BT_extends} is not true if one assumes that $\dim(X)=1$ and $C$ is a point. For example, let $\mathcal{X}$ be the good reduction modulo $p$ of a moduli space of fake elliptic curves, together with a universal family $\mathcal{A}\rightarrow\mathcal{X}$ of abelian surfaces with quaternionic multiplication. There is a decomposition $\mathcal{A}[p^{\infty}]\cong\mathcal{G}\oplus\mathcal{G}$, where $\mathcal{G}$ is a height 2, dimension 1, everywhere versally deformed BT group $\mathcal{G}$. Over any point $x\in\mathcal{X}(\mathbb{F})$,
there exists an elliptic curve $E/\mathbb{F}$ such that $E[p^{\infty}]\cong\mathcal{G}_{x}$. On the other hand, the induced formal deformation of the elliptic curve $E\rightarrow x$ certainly does not extend to a non-isotrivial elliptic curve over $X$.
\end{rem}

We now have all of the ingredients to prove our main Lefschetz theorem.
\begin{thm}\label{Theorem:main_lefschetz}Let $X/\Fq$ be a smooth projective variety. Then there exists an open subset $U\subset X$, whose complement has codimension at least 2, such that the following holds.

Let $C\subset U$ be a smooth projective curve that is the complete intersection of smooth ample divisors of $X$. Let $\pi_C\colon A_C\rightarrow C$ be an abelian scheme of $\text{GL}_2$-type: for a prime $l\neq p$, $R^1(\pi_C)_*\Qlbar$ has irreducible summands that have rank 2 and determinant $\Qlbar(-1)$. Then the following are equivalent.

\begin{itemize}
\item There exists an abelian scheme of $\text{GL}_2$-type $B_U\rightarrow U$ with $B_C\rightarrow C$ isogenous to $A_C\rightarrow C$.
\item The $F$-isocrystal $\mathbb{D}(A_C[p^{\infty}])\otimes \Qpbar\in \fisoc{C}_{\Qpbar}$ extends to an $F$-isocrystal $\calE\in \fisoc{X}_{\Qpbar}$.
\end{itemize}
\end{thm}
\begin{proof}

We first construct $U$. It follows from \cite[Corollary 4.3]{abe2016lefschetz} that there are only finitely many isomorphism classes of irreducible overconvergent $F$-isocrystals $\calF_i$ on $X$ of rank 2 and with determinant $\Qpbar(-1)$. We may pick a $p$-adic local field $K$ over which each such $\calF_i$ is defined. By the slope bounds of Theorem \ref{Theorem:rank_2_slope_bounds}, the two slopes of each such $\calF_i$ differ by at most one. As the two slopes add to one, this implies that the two slopes are in the interval $[0,1]$. (This is the key place where we use the assumption that the $\calF_i$ have rank 2.) Then Lemma \ref{Lemma:Katz} implies that there is an open subset $U_i\subset X$, whose complement has codimension at least 2 on which $\calF_i$ underlies a Dieudonn\'e $\mathscr{M}_i$ in finite, locally free modules with $\mathscr{M}_i\otimes \Qp \cong \calF_i|_{U_i}$ (where on the left hand side we forget $V$ and on the right hand side we forget the $K$-structure). Let $U:=\cap U_i$. In particular \emph{every} overconvergent $F$-isocrystal on $X$ that is irreducible of rank 2 and has determinant $\Qpbar(-1)$ underlies a Dieudonn\'e crystal on $U$. This is $U$ in the first part of the theorem.

The first condition implies the second: $\mathbb{D}(B_U[p^{\infty}])\otimes \Qp\in \fisocd{U}$, and there is an equivalence of categories $\fisoc{X}\rightarrow \fisoc{U}$ because $U$ has complementary codimension at least 2 \cite[Theorem 5.1]{kedlaya2016notes}. Set $\calE$ to be the extension of $\mathbb{D}(B_U[p^{\infty}])\otimes \Qpbar$ to $\fisoc{X}_{\Qpbar}$.

Now, let $\calE$ be as in the theorem. We make three claims about $\calE$.
\begin{enumerate}

\item For every closed point $x$ of $X$, the polynomial $P_x(\calE,t)$ has coefficients in $\QQ$.
\item The object $\calE$ is semisimple and the irreducible summands of $\calE$ have rank 2 and determinant $\Qpbar(-1)$.
\item The object $\calE$ may be descended to $\fisoc{X}$; that is, it has coefficients in $\Qp$.

\end{enumerate}

To prove the first part, pick an isomorphism $\iota\colon \Qpbar\rightarrow \Qlbar$. By \cite[Theorem 0.3]{abe2016lefschetz} or \cite[Theorem 0.4.1]{kedlayacompanions}, there is a unique semi-simple $\iota$-companion to $\calE$, which is a lisse $\Qlbar$ sheaf $L$. For $c$ a closed point of $C$, the polynomial $P_c(L,t)\in \QQ[t]$ by the companion relation. Applying \cite[Proposition 7.1]{esnault2011notes} and induction, we deduce that for \emph{all} closed points $x$ of $X$,
$$P_x(L,t)\in \QQ[t]$$
and hence $P_x(\calE,t)\in \QQ[t]$.

To prove the second part, first note that $\calE_C\in \fisoc{X}_{\Qpbar}$ has irreducible summands of rank 2 and determinant $\Qpbar(-1)$. This follows because $\mathbb{D}(A_C[p^{\infty}])\otimes \Qpbar$ is isomorphic to $\Qpbar$-rational crystalline cohomology, which is a companion to  $R^1(\pi_C)_*\Qlbar$ by \cite{katzmessing}. Moreover, $\mathbb{D}(A_C[p^{\infty}])\otimes \Qpbar$ is semisimple by \cite{pal2015monodromy}. Then the fact that $R^1(\pi_C)_*\Qlbar$ has irreducible summands of rank 2 with cyclotomic determinant implies the same result for $\calE_C$. Finally, as $C$ is the smooth complete intersection of smooth ample divisors, Theorem \ref{Theorem:AE_Lefschetz} implies that $\calE$ is semisimple and the irreducible summands of $\calE$ have rank 2 and cyclotomic determinant.

To prove the third part, for every $\sigma\in \text{Aut}_{\Qp}(\Qpbar)$, as $P_x(\calE,t)\in\QQ[t]$ for all closed points $x$ of $X$, Theorem \ref{Theorem:Brauer_Nesbitt_coefficients} implies that $^{\sigma}(\calE)\cong \calE$. To prove that $\calE$ descends to $\fisoc{X}$, we must construct isomorphisms $c_{\sigma}\colon ^{\sigma}(\calE)\cong \calE$ that satisfy the cocycle condition by \cite[Lemma 4]{sosna2014scalar}. Now, the natural map
$$\Hom(^{\sigma}\calE,\calE)\rightarrow \Hom(^{\sigma}\calE_C,\calE_C)$$
is an isomorphism by Theorem \ref{Theorem:AE_Lefschetz}. Note that $\calE_C$ descends to $\fisoc{C}$: indeed, $\mathbb{D}(A_C[p^{\infty}])\otimes \QQ\in \fisoc{C}$. Therefore, the descent data transports to the left and hence $\calE\in \fisoc{X}$.

By construction of $U$, the $F$-isocrystal $\calE$ underlies a Dieudonn\'e crystal $(M,F,V)$ on $U$ whose associated $F$-isocrystal is isomorphic to $\calE_U$. By Theorem \ref{Theorem:de Jong}, there is an associated $p$-divisible group $\calG_U$, with $\mathbb{D}(\calG_U)\cong (M,F,V)$. It then follows that $A_C[p^{\infty}]$ is isogenous to $\calG_C$: their Dieudonn\'e crystals are isomorphic when viewed as $F$-isocrystals. Let $K\subset A_C[p^{\infty}]$ be the kernel of one such isogeny. Then $B_C:=A_C/K$ is an abelian scheme with $B_C[p^{\infty}]\cong \calG_C$. Applying Corollary \ref{Corollary:AV_extends_BT_extends}, we see there exists an abelian scheme $B_U\rightarrow U$ (of dimension $g$) as desired. As $B_U[p^\infty]$ and $\calG_U$ are BT groups on $U$ that are isomorphic on $X_{/C}$, there are isomorphic by Lemma \ref{Lemma_BT_extending_iso}. Therefore $B_U$ is compatible with $\calE_U$, as desired.

\end{proof}
In the proof of Theorem \ref{Theorem:main_lefschetz}, the key step where we use the fact that the irreducible summands have rank 2 is to deduce that the Newton slopes of $\calE$ are in the range $[0,1]$ everywhere on $X$.

It is natural to wonder whether or not one can remove both the open subset and the isogeny in Theorem \ref{Theorem:main_lefschetz}, in further analogy with Theorem \ref{Theorem:simpson_lefschetz}. Johan de Jong suggested the following example, which shows that the answer is ``no'' for general abelian varieties.
\begin{example}Let $E/\Fq$ be a supersingular elliptic curve, so $\alpha_p\times\alpha_p\subset E\times E$. Over $\AA^2\backslash (0,0)$, there is an injective homomorphism
$$\iota\colon (\alpha_p)_{\AA^2\backslash (0,0)}\hookrightarrow (\alpha_p\times\alpha_p)_{\AA^2\backslash (0,0)}\subset(E\times E)_{\AA^2\backslash (0,0)}$$
given as follows: let $(s,t)$ be linear coordinates on $\alpha_p\times \alpha_p\subset \AA^2$ and let $(x,y)$ be the coordinates on $\AA^2\backslash (0,0)$. Then the above sub-group scheme is defined by the equation $$[x:y]=[s:t].$$
Let $A$ be the following (principally polarizable) supersingular abelian surface over $\AA^2\backslash (0,0)$:
$$A:=(E\times E)_{\AA^2\backslash (0,0)}/\iota(\alpha_p)$$
\cite{moret1981familles}. (The principal polarization might only exist after a finite field extension.) Let $L\subset \AA^2$ be a line not passing through the origin. Consider the compactification $\AA^2\backslash (0,0)\subset \PP^2$ and let $\bar L$ be the closure of $L$ in $\PP^2$. Then $A_L$ extends to a non-constant supersingular abelian scheme $A_{\bar L}\rightarrow \bar L$ and $\bar L\subset \PP^2$ is a smooth, ample divisor. However, it is easy to see that $A_{\bar L}$ cannot extend to $\PP^2$.  If it did, the family would have to be supersingular as the geometric monodromy is trivial and there is at least one supersingular fiber. Let $\mathscr{A}_{2,1}\otimes\FF$ be the moduli of principally polarized abelian surfaces over $\FF$. Then the supersingular locus of is a union of complete rational curves \cite[p. 177]{oort1974} and there are no morphisms $\PP^2\rightarrow \PP^1$.
\end{example}

\begin{cor}\label{Corollary:rank_2_reduces_to_curves}
Let $X/\Fq$ be a smooth projective variety with $\dim(X)\geq 2$ and let $L$ be a rank 2 lisse $\Qlbar$ sheaf with cyclotomic determinant and infinite geometric monodromy. Then there exists an open subset $U\subset X$, whose complement has codimension at least 2, such that
\begin{itemize}
\item if $C\subset U$ is a smooth proper curve that is the complete intersection of smooth ample divisors;
\item if $L_C$ comes from an abelian scheme on $A_C\rightarrow C$, in the sense of Conjecture \ref{Conjecture:R2}; and
\item if all $p$-adic companions to $L$ exist,
\end{itemize} then $L_U$ comes from an abelian scheme $B_U\rightarrow U$, i.e., Conjecture \ref{Conjecture:R2} is true for $(X,L)$
\end{cor}
\begin{proof}
By the usual Lefschetz theorems, $W(C)\rightarrow W(X)$ is surjective. Therefore if two lisse $\Qlbar$ sheaves on $X$ are isomorphic on $C$, then they are isomorphic. The $U$ in our theorem is the $U$ guaranteed to exist by Theorem \ref{Theorem:main_lefschetz}. 

Every irreducible summand of (the semi-simple lisse sheaf) $R^1\pi_{C*}\Qlbar$ is a companion of $L_C$ by assumption, and hence has determinant $\Qlbar(-1)$. Therefore each of the $\calE_i$ has determinant $\Qpbar(-1)$.  Consider $ \mathbb{D}(A_C[p^{\infty}])\otimes \Qpbar\in \fisoc{X}_{\Qpbar}$. This is semisimple by \cite{pal2015monodromy}. There is a decomposition:
$$\displaystyle \mathbb{D}(A_C[p^{\infty}])\otimes \Qpbar \cong \bigoplus_i(\calE_i|_C)^{m_i}$$
for some integers $m_i\geq 1$ because the LHS is isomorphic to relative crystalline cohomology \cite{katzmessing}. Set $\calE:=\bigoplus_i (\calE_i)^{m_i}$ and apply Theorem \ref{Theorem:main_lefschetz}.
\end{proof}

\bibliographystyle{alphaurl}
\bibliography{extending_abelian}

\end{document}